\newtheorem{theorem}{Theorem}
\newtheorem{remark}{Remark}
\newcommand{\vv}{\bm{v}}
\newcommand{\xx}{\bm{x}}
\newcommand{\oxx}{\bm{\overline{x}}}
\newcommand{\comment}[1]{}
\newcommand{\old}[1]{{#1}^{\mathrm{old}}}
\newcommand{\sgrad}[2]{ \langle \nabla #1 \rangle_{#2}}
\newcommand{\wgrad}[2]{ \langle \widetilde{\nabla} #1 \rangle_{#2}}
\newcommand{\wdiv}[2]{ \langle \widetilde{\nabla} \cdot #1 \rangle_{#2}}
\newcommand{\lap}[2]{ \langle \Delta #1 \rangle_{#2}}
\newcommand{\A}{\mathbf{A}}
\renewcommand{\b}{\mathbf{b}}
\newcommand{\B}{\mathbf{B}}
\DeclareMathOperator{\dist}{dist}
\DeclareMathOperator{\diam}{diam}
		\title{\bf Semi-implicit Lagrangian Voronoi Approximation for the incompressible Navier-Stokes equations}
		\author[1]{Ondřej Kincl} 
		\author[1]{Ilya Peshkov}
		\author[2,3]{Walter Boscheri}
		\affil[1]{\small Laboratory of Applied Mathematics, DICAM, University of Trento, via Mesiano 77, 38123 Trento, Italy}
		\affil[2]{Laboratoire de Mathématiques UMR 5127 CNRS, Université Savoie Mont Blanc, 73376 Le Bourget du Lac, France}
            \affil[3]{Department of Mathematics and Computer Science, University of Ferrara, Ferrara, Italy}
		\date{\today}
\begin{document}
		\maketitle
		
		\begin{abstract}
            We introduce Semi-Implicit Lagrangian Voronoi Approximation (SILVA), a novel numerical method for the solution of the incompressible Euler and Navier-Stokes equations, which combines the efficiency of semi-implicit time marching schemes with the robustness of time-dependent Voronoi tessellations. In SILVA, the numerical solution is stored at particles, which move with the fluid velocity and also play the role of the generators of the computational mesh. The Voronoi mesh is rapidly regenerated at each time step, allowing large deformations with topology changes. As opposed to the reconnection-based Arbitrary-Lagrangian-Eulerian schemes, we need no remapping stage. A semi-implicit scheme is devised in the context of moving Voronoi meshes to project the velocity field onto a divergence-free manifold. We validate SILVA by illustrative benchmarks, including viscous, inviscid, and multi-phase flows. Compared to its closest competitor, the Incompressible Smoothed Particle Hydrodynamics (ISPH) method, SILVA offers a sparser stiffness matrix and facilitates the implementation of no-slip and free-slip boundary conditions.
		\end{abstract}
		%
	
	\section{Introduction}
	In computational fluid dynamics, Lagrangian methods are invaluable for their ability to handle complex deformations and material interfaces. The defining feature of Lagrangian methods is that the degrees of freedom of the spatial approximation follow the motion of the material, rather than being static with respect to a selected Eulerian frame of reference. Consequently, the (often problematic) nonlinear convective terms are absent, thus obtaining a family of numerical schemes which exhibit virtually no numerical dissipation at contact discontinuities and material interfaces. Moreover, individual particles or cells preserve their identity, facilitating the mass conservation and the implementation of sharp material interfaces.
	
	Developing a mesh-based Lagrangian method is notoriously difficult because of the large deformations inherent to the fluid motion. This problem becomes particularly evident when considering flows with high vorticity or large velocity gradients, that stretch, twist or compress the control volumes until they get invalid, i.e. with negative volumes. To overcome this problem, mesh optimization algorithms have been developed \cite{winslow1963equipotential,winslow1966numerical,dobrev2019target,galera2010two}, aiming at improving the mesh quality by performing a sort of regularization of the shape of each single cell. If the moving mesh scheme belongs to the category of Arbitrary-Lagrangian-Eulerian (ALE) methods, in which the mesh motion can be driven independently from the fluid velocity, the stage of mesh optimization is referred to as \textit{rezoning}. The rezoning step is directly embedded in the numerical scheme for direct ALE methods \cite{boscheri_LagrangeFV,boscheri_LagrangeDG}, otherwise it must be followed by a \textit{remapping} step in indirect ALE methods \cite{dukowicz1987accurate,loubere2005subcell,kucharik2014conservative}.
	These techniques are all based on the assumption that the optimization process does not change the topology of the underlying computational grid, hence posing a severe limit to the possibility of improving the mesh quality for complex configurations. Moreover, the mesh optimization stage has to be compatible with some physical requirements on the numerical solution, e.g. contact discontinuities must remain untouched to ensure the sharp interface resolution of Lagrangian schemes. The most general solution to this difficulty, which allows to combine mesh optimization and Lagrangian mesh motion, is to allow for \textit{topology changes}. This approach is also known as reconnection, that has been originally presented in \cite{pasta1959heuristic}. The idea was to use a set of Lagrangian particles, that are used to define the computational tessellation without assuming a fix connectivity: indeed, the topology of the mesh was time dependent, so that at each time step the connectivity was established by defining the surrounding domains (or control volumes) in accordance to the current particle positions. This mesh was then used to discretize the equations written in Lagrangian form. More recent developments of this seminal idea are given by Free-Lagrange methods \cite{fritts1985free,crowley2005flag}, Re-ALE schemes \cite{loubere2010reale,bo2015adaptive} and finite volume ALE methods \cite{springel2011hydrodynamic,gaburro2020high}.

	Among many others, a well-established mesh-free Lagrangian method is the Smoothed Particle Hydrodynamics (SPH), developed in 1970s: see \cite{monaghan1992smoothed} for a review. In the SPH framework, the continuum is replaced by a finite number of material points (particles), and the gradients of physical variables (like density and velocity) are approximated using convolution and a convenient set of \textit{kernel functions}. No computational grid is used in the SPH framework, hence yielding high versatility in the simulation of large deformations of the continuum. Due to its particle nature, the SPH method avoids the mesh-related issues and it is exceptionally robust. Owing to this advantage, together with the simplicity of implementation, SPH boasts with wide range of applications in science and industry \cite{monaghan2012smoothed,bavsic2022sphINS}. Nevertheless, compared to mesh-based approaches, the computational cost of SPH schemes associated to each degree of freedom is typically larger, since every particle needs to communicate with every neighbor within a certain radius (as specified by the smoothing length) \cite{violeau2016smoothed}. Moreover, converging to the exact solution is challenging because the spatial step must shrunk faster than the smoothing length \cite{zhu2015numerical}, leading to an unfavorable effective convergence rate. To remain competitive in this regard, the accuracy of SPH methods needs to be improved by relying on a set of re-normalized operators \cite{kulasegaram2000corrected, oger2007improved}, typically at the expense of discrete energy conservation \cite{violeau2012fluid}. Lastly, let us mention the problematic of boundary conditions. While free surfaces in fluids and solids are automatically handled, implementing Dirichlet or Neumann conditions in the SPH framework is less trivial (in comparison to e.g. Finite Element Method). As it turns out, having a mesh is not completely without merit and this is where Lagrangian Voronoi Methods (LVM) enter the game.
	
	The study of Lagrangian methods based on Voronoi tessellations dates back to the turn of the millennium \cite{ball1996free, Howell2002} and received further attention in the light of recent publications 
	\cite{hess2010particle,springel2011hydrodynamic,gaburro2020high,despres2024lagrangian}. By 
	definition, a Voronoi cell $\Omega_i$ is the set of points which is closer to the generating particle 
	(seed) $\xx_i$ than to any other particle $\xx_j$. The Voronoi mesh generated by moving particles 
	has the advantage of keeping reasonably good quality even in case of large deformations. Furthermore, the local topology of Voronoi meshes can be arbitrary, thus this kind of mesh permits to remove some mesh imprinting that might arise from the adoption of the Lagrangian frame of reference.
	
	While the aforementioned studies revolve around compressible fluids and use explicit time steps, in this 
	paper, we introduce some implicitness to obtain a LVM variant for the incompressible Euler and Navier-Stokes equations. With the aim of avoiding the solution of globally implicit systems defined on the entire computational grid, the implicit-explicit (IMEX) approach has emerged in the literature \cite{Hofer,PR_IMEX,Dumbser_Casulli16,BosPar2021,izzo2021strong,izzo2021construction,izzo2017highly} as a powerful time integration technique which permits to split explicit and implicit fluxes \cite{Toro_Vazquez12,Dumbser_Casulli16,BosPar2021}.
	In particular, we rely on a semi-implicit time discretization \cite{Casulli1990,ParkMunz2005,VoronoiDivFree,carlino2024arbitrary}, in which the mesh motion is treated explicitly while the pressure terms are taken into account implicitly. This is well suited to enforce the divergence-free condition on the velocity field, which is nothing but the energy equation for incompressible fluids.
	
	To the best of our knowledge, such studies are missing in the literature. Our novel numerical scheme is linked with the recent work of Després \cite{despres2024lagrangian}, and it relies 
	on the improved gradient approximation by Serrano and Espanol \cite{serrano2001thermodynamically}. The scheme presented here is termed Semi-Implicit Lagrangian Voronoi Approximation (SILVA), since the mathematical model is given by the incompressible Navier-Stokes equations. We will show that SILVA arises quite naturally from basic assumptions. The correctness and accuracy 
	of the scheme is verified in benchmarks, including the lid-driven cavity problem and a Rayleigh-Taylor instability test for multi-phase fluids. Not only engineers and researchers interested in new Lagrangian numerical methods for fluid mechanics may find this paper valuable, but also SPH specialists because of the deep analogies between SILVA and incompressible SPH (ISPH). Both methods involve moving particles, are based on Helmholtz decomposition, and the cell-list structure from ISPH is a perfect tool for constructing a Voronoi grid for SILVA. To summarize, the two methods are highly inter-compatible and an implementation of a physical phenomenon in one method can be painlessly imported to the other one.
	
	The structure of the paper is as follows. In Section \ref{sec.pde} we present the governing equations of incompressible fluids, while in Section \ref{sec.VoronoiMesh} we introduce the definition and notation of the moving Voronoi mesh, including its generation. Section \ref{sec.numscheme} is devoted to the description of the numerical scheme, namely our Lagrangian Voronoi Method, with details on the explicit and the implicit time discretization as well as on the discrete spatial operators. The numerical validation of the accuracy and robustness of SILVA is presented in Section \ref{sec.test} by running several benchmarks for incompressible fluids. Finally, Section \ref{sec.concl} finalizes this article by summarizing the work and giving an outlook to future investigations.

	\section{Governing equations} \label{sec.pde}
	We consider a $d$-dimensional domain $\Omega \subset \mathds{R}^d$ closed by the boundary $\partial \Omega \subset \mathds{R}^{d-1}$, with $\xx$ being the spatial coordinate vector and $t \in \mathds{R}_0^+$ representing the time coordinate. 
	In the Lagrangian frame, the mathematical model which describes the phenomena of incompressible viscous flows writes
	\begin{subequations}\label{eqn.pde}
		\begin{align}
			\frac{\text{d}\vv}{\text{d}t} + \frac{\nabla p}{\rho} - \nu \Delta \vv &= \mathbf{0}, \label{eqn.pde_v} \\
			\nabla \cdot \vv &= 0, \label{eqn.divv} \\
			\frac{\text{d}\xx}{\text{d}t} &=\vv. \label{eqn.pde_x} 
		\end{align}
	\end{subequations}
	Here, $\vv(\xx,t)$ identifies the velocity vector, $p(\xx,t)$ denotes the pressure, and $\rho(\xx, t)$ is the fluid density, $\nu$ is the kinematic viscosity of the fluid and
	\begin{equation}
		\frac{\text{d}g}{\text{d}t} = \frac{\partial g}{\partial t} + \vv \cdot \nabla g.
	\end{equation}
	is the material derivative of a quantity $g(\xx,t)$. Equation \eqref{eqn.pde_x} is the so-called trajectory equation, which drives the motion of the coordinates and is the defining feature of a Lagrangian method.
	
	The flow is incompressible, which implies that $\rho$ is constant along the flow trajectories, although it is not necessarily constant in space. The velocity field is subjected to the divergence constraint \eqref{eqn.divv}, which can be reformulated in a weak sense. Indeed, by multiplying Equation \eqref{eqn.divv} by a smooth test function $\varphi(\xx) \in \mathcal{C}^{\infty}$ and integrating by parts over the domain $\Omega$, we have that 
	\begin{equation}
		\int_{\Omega} \varphi \, \nabla \cdot \vv \, \dd \xx = \int_{\partial \Omega} \phi \, \vv \cdot \bm{n} \, \dd S - \int_{\Omega} \nabla \varphi \cdot \vv \, \dd \xx.
		\label{eqn.divv_weak}
	\end{equation}
	We will assume no fluid penetration through boundary 
	\begin{equation}
		\vv \cdot \bm{n} = 0, \quad \forall \xx \in \partial \Omega,
	\end{equation}
 	so that Equation \eqref{eqn.divv_weak} reduces to
	\begin{equation}
		\int_{\Omega} \nabla \varphi \cdot \vv \, \dd \xx = 0, \qquad \forall \varphi \in \mathcal{C}^{\infty}.
		\label{eqn.pde_divv}
	\end{equation}
	
	\section{Moving Voronoi mesh with topology changes} \label{sec.VoronoiMesh}
	
	\subsection{Notation and definitions}
	Let assume $\Omega \subset \mathds{R}^d$ to be an open convex polytope. We consider $N$ \textit{distinct} sample points $\xx_1, \xx_2,\dots, \xx_N \in \Omega$, which will be treated as material points. The points $\xx_i$ are called \textit{seeds}, for $i=1,2,\dots,N$. A Voronoi cell $\omega_i$ corresponding to the seed $\xx_i$ is defined for every $i,j=1,2,\ldots,N$ as
	\begin{equation*}
		\omega_i = \bigcap_{j \neq i} \bigg\{ \xx \in \Omega: \quad |\xx - \xx_i| < |\xx - \xx_j| \bigg\}.
	\end{equation*} 
	To make notation easier throughout the entire paper, we assume that the cell indexes are always running in the interval $i,j=1,2,\ldots,N$. In the Lagrangian framework, every cell contains a portion of fluid with mass $M_i$, which is constant in time. The Voronoi cell is a convex polytope and its boundary $\partial \omega_i$ is composed of the set of facets
	\begin{equation}
		\Gamma_{ij} = \Gamma_{ji} = \bigg\{ \xx \in \partial \omega_i: \quad |\xx - \xx_i| = |\xx - \xx_j| \bigg\}.
	\end{equation}
	Furthermore, the cell boundary $\partial \omega_i$ might share a portion of the domain boundary $\partial \Omega$ when $ \partial \Omega \cap \partial \omega_i \neq 0$. In such case, we refer to $\omega_i$ as a \textit{boundary-touching cell}, or an \textit{interior cell} otherwise. We also define the surface element:
	\begin{equation}
		\label{eqn.dS}
		\bm{S}_i = \int_{\partial \omega_i} \bm{n} \, \dd S,
	\end{equation}
	where $\bm{n}(\xx)$ is the outward pointing unit normal vector defined on $\partial \omega_i$.
	
	The golden property of Voronoi meshes is the orthogonality between the facets $\Gamma_{ij}$ and the segments joining two seeds, namely $\xx_{ij} = \xx_i - \xx_j$. Hence, the outer normal vector $\bm{n}_{ij}$ is simply given for each facet $\Gamma_{ij}$ by
	\begin{equation}
		\label{eqn.nv}
		\bm{n}_{ij} = -\frac{\xx_{ij}}{r_{ij}},
	\end{equation}
	where $r_{ij} = |\xx_{ij}|$ represents the inter-seed distance. The $d$-dimensional Lebesgue measure of each cell is denoted by $|\omega_i|$, while $|\partial \omega_i|$ is the $(d-1)$-dimensional Hausdorff measure of its boundary. Likewise, $|\Gamma_{ij}|$ represents the length of the facet $\Gamma_{ij}$. 
	Since our Voronoi meshes are generally not central, we must distinguish the midpoint of a facet
	\begin{equation}
		\bm{m}_{ij} = \frac{1}{|\Gamma_{ij}|}\int_{\Gamma_{ij}} \xx \; \dd S, 
	\end{equation}
	and the inter-seed midpoint
	\begin{equation}
		\label{eqn.xij}
		\oxx_{ij} = \frac{\xx_i + \xx_j}{2}.
	\end{equation}
	Indeed, for a general Voronoi cell, it is possible that $\oxx_{ij} \notin \Gamma_{ij}$. 
	
	To evaluate partial derivatives in closed form for a Voronoi mesh, we refer to the recent developments forwarded in \cite{despres2024lagrangian}. There is indeed a simple formula for computing the derivative of a Voronoi volume with respect to the other seeds \cite{despres2024lagrangian}, which is recalled in the following theorem.
	
	\begin{theorem}
		For $j \neq i$ we have the closed form formulae:
		\begin{equation}
			\pdv{|\omega_i|}{\xx_j} = -\frac{|\Gamma_{ij}|}{r_{ij}} \left( \bm{m}_{ij} - \xx_j \right)
			\label{eq:Omega_wrt_j}
		\end{equation}
		and
		\begin{equation}
			\pdv{|\omega_i|}{\xx_i} = -\sum_{j\neq i} \pdv{|\omega_j|}{\xx_i} = -\sum_{j\neq i} \pdv{|\omega_i|}{\xx_j} - \bm{S}_i
			\label{eq:Omega_wrt_i}.
		\end{equation}
	\end{theorem}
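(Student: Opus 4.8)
The plan is to establish \eqref{eq:Omega_wrt_j} by a first-order shape-derivative (Reynolds transport) argument and then to deduce \eqref{eq:Omega_wrt_i} from it, using the space-filling property of the tessellation together with the antisymmetry built into the inter-seed geometry. The key structural fact I would exploit is that displacing a single seed perturbs the mesh only locally, so that the volume variation reduces to a flux through one facet.

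For \eqref{eq:Omega_wrt_j}, I would first observe that when $\xx_j$ (with $j\neq i$) is displaced, the only part of $\partial\omega_i$ that moves to first order is the shared facet $\Gamma_{ij}$: the facets $\Gamma_{ik}$ with $k\neq j$ are unaffected, and the domain-boundary portion $\partial\omega_i\cap\partial\Omega$ carries zero normal velocity because $\partial\Omega$ itself is fixed. The facet lies on the bisector $\{\xx:\phi(\xx)=0\}$ with $\phi(\xx)=\tfrac12(|\xx-\xx_i|^2-|\xx-\xx_j|^2)$, for which $\nabla_{\xx}\phi=\xx_j-\xx_i=r_{ij}\bm{n}_{ij}$ and $\partial\phi/\partial\xx_j=\xx-\xx_j$. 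Requiring a tracked point to stay on the zero level set yields the outward normal displacement $\bm{n}_{ij}\cdot\dd\xx=-\tfrac{1}{r_{ij}}(\xx-\xx_j)\cdot\dd\xx_j$. The induced volume change is the flux $\dd|\omega_i|=\int_{\Gamma_{ij}}(\bm{n}_{ij}\cdot\dd\xx)\,\dd S$; pulling the constant $\dd\xx_j$ out of the integral leaves the factor $\int_{\Gamma_{ij}}(\xx-\xx_j)\,\dd S=|\Gamma_{ij}|(\bm{m}_{ij}-\xx_j)$, recognized from the definition of the midpoint $\bm{m}_{ij}$. Reading off the coefficient of $\dd\xx_j$ then gives \eqref{eq:Omega_wrt_j}.

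For \eqref{eq:Omega_wrt_i}, the first equality follows by differentiating the identity $\sum_k|\omega_k|=|\Omega|$ — valid for every admissible configuration since the cells tile $\Omega$ — with respect to $\xx_i$, so that $\sum_k\pdv{|\omega_k|}{\xx_i}=\mathbf{0}$ isolates $\pdv{|\omega_i|}{\xx_i}=-\sum_{j\neq i}\pdv{|\omega_j|}{\xx_i}$. For the second equality I would apply \eqref{eq:Omega_wrt_j} with $i$ and $j$ exchanged, using $\Gamma_{ij}=\Gamma_{ji}$, $r_{ij}=r_{ji}$ and $\bm{m}_{ij}=\bm{m}_{ji}$, to get $\pdv{|\omega_j|}{\xx_i}=-\tfrac{|\Gamma_{ij}|}{r_{ij}}(\bm{m}_{ij}-\xx_i)$. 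Subtracting the two expressions makes the midpoint cancel and, via $\xx_{ij}/r_{ij}=-\bm{n}_{ij}$, leaves the antisymmetric relation $\pdv{|\omega_j|}{\xx_i}=\pdv{|\omega_i|}{\xx_j}-|\Gamma_{ij}|\bm{n}_{ij}$. Summing this over $j\neq i$ and substituting into the first equality turns the right-hand side into $-\sum_{j\neq i}\pdv{|\omega_i|}{\xx_j}+\sum_{j\neq i}|\Gamma_{ij}|\bm{n}_{ij}$.

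It then remains to identify the facet-normal sum $\sum_{j\neq i}|\Gamma_{ij}|\bm{n}_{ij}$ with $-\bm{S}_i$, and this is where I expect the only genuine difficulty to lie, namely the bookkeeping for boundary-touching cells. Applying the divergence theorem to the closed surface $\partial\omega_i$ gives $\int_{\partial\omega_i}\bm{n}\,\dd S=\mathbf{0}$; splitting this into the Voronoi facets and the domain-boundary piece shows that $\sum_{j\neq i}|\Gamma_{ij}|\bm{n}_{ij}=-\int_{\partial\omega_i\cap\partial\Omega}\bm{n}\,\dd S=-\bm{S}_i$. For an interior cell the facet normals already close up and this term vanishes, recovering the naive cancellation, whereas for a boundary-touching cell the non-closure of the facet set is compensated exactly by the surface element $\bm{S}_i$ of \eqref{eqn.dS}. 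The care needed here is to keep the orientation of $\bm{n}_{ij}$ in \eqref{eqn.nv} consistent with the outward normal used in \eqref{eqn.dS}; once that sign bookkeeping is settled, \eqref{eq:Omega_wrt_i} follows.
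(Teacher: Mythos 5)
Your proposal is correct, and it takes a genuinely more self-contained route than the paper, whose proof is essentially a citation: the paper only proves the first equality of \eqref{eq:Omega_wrt_i} (by differentiating the tiling identity $\sum_i |\omega_i| = |\Omega|$, exactly as you do) and defers both \eqref{eq:Omega_wrt_j} and the identity $\pdv{|\omega_i|}{\xx_i} = -\sum_{j\neq i}\pdv{|\omega_i|}{\xx_j} - \bm{S}_i$ to Despr\'es \cite{despres2024lagrangian}. You instead prove \eqref{eq:Omega_wrt_j} from scratch by a level-set/shape-derivative argument, which is sound: when $\xx_j$ moves, only the bisector facet $\Gamma_{ij}$ has nonzero normal velocity (the other facets lie on bisectors not involving $\xx_j$, the wall portion is fixed, and edge effects are second order), and your computation $\bm{n}_{ij}\cdot \dd\xx = -\tfrac{1}{r_{ij}}(\xx-\xx_j)\cdot\dd\xx_j$ integrates to exactly \eqref{eq:Omega_wrt_j}. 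You then recover the $-\bm{S}_i$ identity as a \emph{consequence} of \eqref{eq:Omega_wrt_j}, the swap $i\leftrightarrow j$, and the tiling identity, via the antisymmetry relation $\pdv{|\omega_j|}{\xx_i}-\pdv{|\omega_i|}{\xx_j} = \tfrac{|\Gamma_{ij}|}{r_{ij}}\xx_{ij} = -|\Gamma_{ij}|\bm{n}_{ij}$; this reverses the paper's logical order, where that identity is an independent input from the literature. One point where your treatment is actually sharper than the paper's own notation: your final step identifies $\sum_{j\neq i}|\Gamma_{ij}|\bm{n}_{ij} = -\int_{\partial\omega_i\cap\partial\Omega}\bm{n}\,\dd S$, which forces $\bm{S}_i$ to be read as the integral of $\bm{n}$ over the domain-boundary portion $\partial\omega_i\cap\partial\Omega$ only; taken literally, the definition \eqref{eqn.dS} integrates over the whole closed boundary $\partial\omega_i$ and would vanish identically by the very divergence theorem you invoke, which would make \eqref{eq:Omega_wrt_i} false for boundary-touching cells. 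So your argument both proves the statement and pins down the interpretation of $\bm{S}_i$ under which it is true. In short: the paper's proof buys brevity by outsourcing the geometric content; yours buys completeness and makes the boundary-cell bookkeeping explicit, at the cost of needing the (standard but informally stated) Hadamard transport formula for moving polytopes.
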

	\begin{proof}
		We refer to \cite{despres2024lagrangian} for the proofs of \eqref{eq:Omega_wrt_j} and the identity
		\begin{equation}
			\pdv{|\omega_i|}{\xx_i} = -\sum_{j\neq i} \pdv{|\omega_i|}{\xx_j} - \bm{S}_i.
		\end{equation}
		The remaining identity
		\begin{equation}
			\pdv{|\omega_i|}{\xx_i} = -\sum_{j\neq i} \pdv{|\omega_j|}{\xx_i} 
		\end{equation}
		is a simple consequence of $\sum_i |\omega_i| = |\Omega|$.
	\end{proof}
	We note that formula \eqref{eq:Omega_wrt_i} fails when $\Omega$ is non-convex. This is quite a concerning limitation, which will require special treatment in future studies. In this paper, all domains are rectangular.
	
	\subsection{Voronoi mesh generation}
	The problem of generating Voronoi meshes is well studied in the literature \cite{aurenhammer2013voronoi} and Voronoi mesh generators have been implemented in a variety of open-source libraries, such as Qhull \cite{barber2013qhull}, Voro++ \cite{rycroft2009voro++} or CGAL \cite{fabri2009cgal}. Most of them rely on constructing the Delaunay triangulation, which is a topological dual of a Voronoi grid. However, for a Lagrangian Voronoi method, we 
	found that a direct approach \cite{ray2018meshless, liu2020parallel} is preferable, where no dual mesh is constructed. We illustrate this iterative technique in Figure 
	\ref{fig:voromeshing} for the first five iterations $k=\{1,2,3,4,5\}$. In the first step, i.e. for $k=0$, every cell 
	$\omega_i$ is initialized as $\omega_i^{0} = \Omega$. We then construct a cell list, which divides 
	the computational domain $\Omega$ into a finite number of partitions (squares in Figure \ref{fig:voromeshing}) of size comparable to the spatial 
	resolution $\delta r$ (in this paper, we set the side length of those partitions to be $2\delta 
	r$). For a given $\xx_i$ (green particle in Figure \ref{fig:voromeshing}), we find the containing partition $P_{i,0} \ni \xx$ and proceed by generating 
	an iterator through a sequence $ P_{i,k}$, $k=1,2,3,\ldots$
	of all other partitions in the cell list, sorted in ascending order by their distance to $P_{i,0}$. Then, the polytopes 
	\begin{equation}
		\omega_i^{k} = \bigcap_{\xx_j \in P_{i,k} } \left\{ \xx \in \omega_i^{k-1} : |\xx - \xx_i| < |\xx - \xx_j|  \right\}, \quad k=1,2,3,\ldots 
	\end{equation}
	are constructed in an iterative process. Each $\omega_i^{k}$ has a radius $R_i^{k}$, which is the distance between $\xx_i$ and the furthest vertex of $\omega_i^{k}$. The iterative procedure stops when the Hausdorff distance between $P_{i,0}$ and $P_{i,k+1}$ satisfies
	\begin{equation}
		R_i^{k} < \frac{1}{2} \dist \left( P_{i,0}, P_{i,k+1}\right),
	\end{equation}
	thus we set $\omega_i = \omega_i^{k}$, as all remaining points are provably too remote and can be excluded from the computation. 
	
	\begin{figure}[ht!]
		\centering
		\begin{subfigure}{0.33\linewidth}
			\includegraphics[trim = {1cm, 0, 1cm, 0}, width=\linewidth]{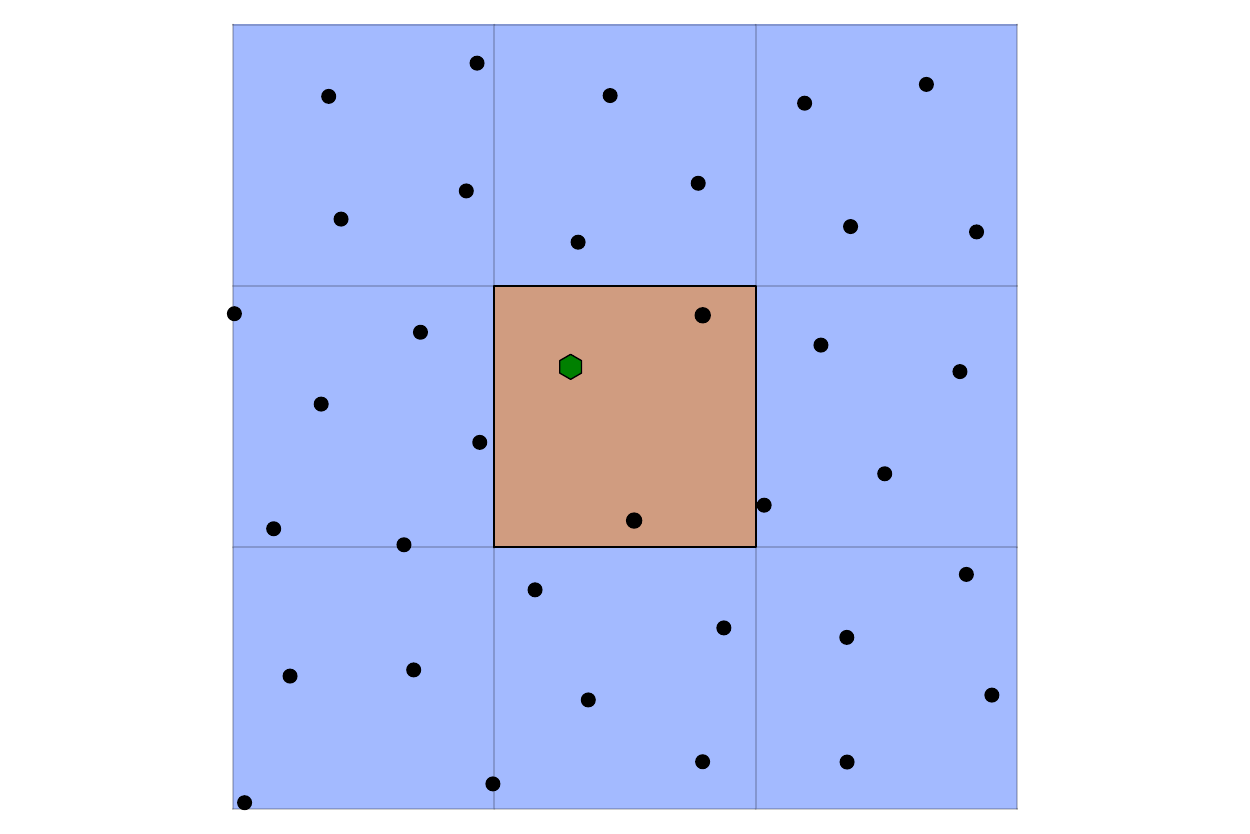}
		\end{subfigure}%
		\begin{subfigure}{0.33\linewidth}
			\includegraphics[trim = {1cm, 0, 1cm, 0}, width=\linewidth]{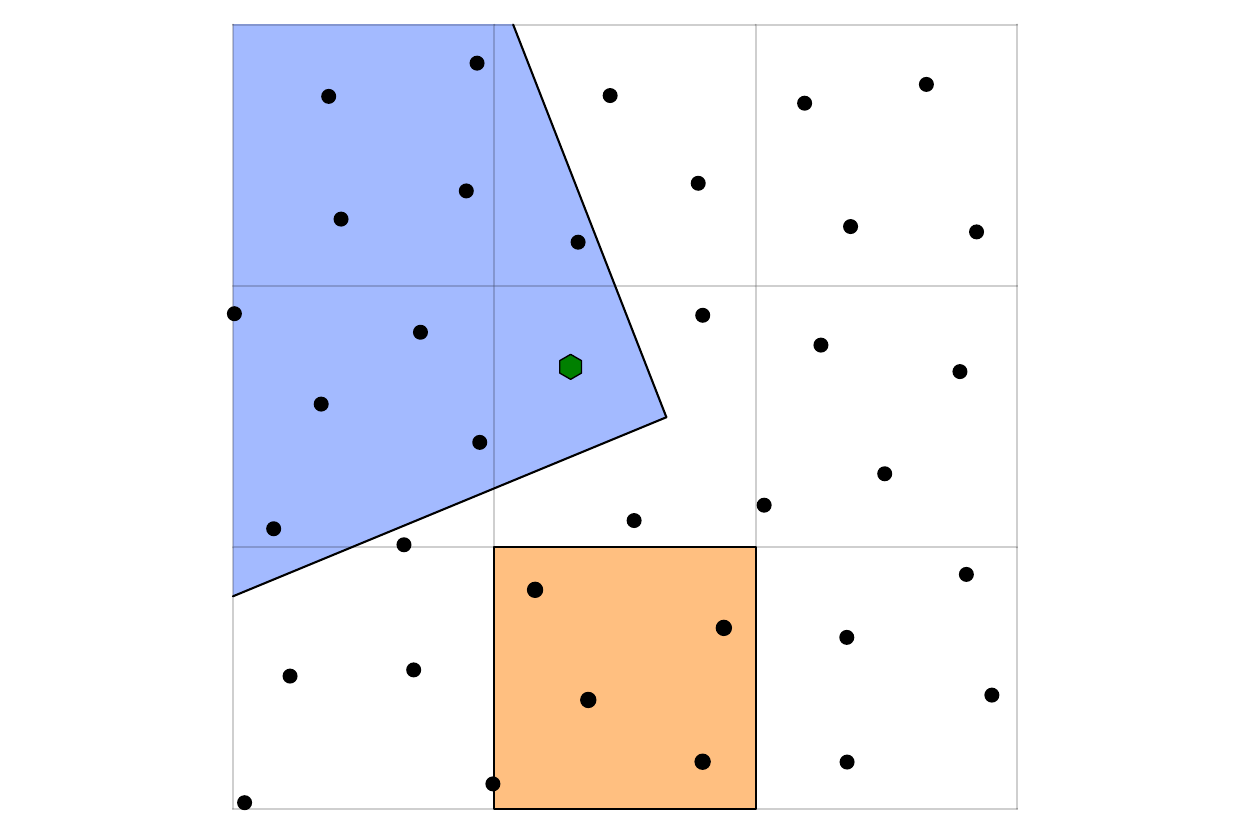}
		\end{subfigure}
		\begin{subfigure}{0.33\linewidth}
			\includegraphics[trim = {1cm, 0, 1cm, 0}, width=\linewidth]{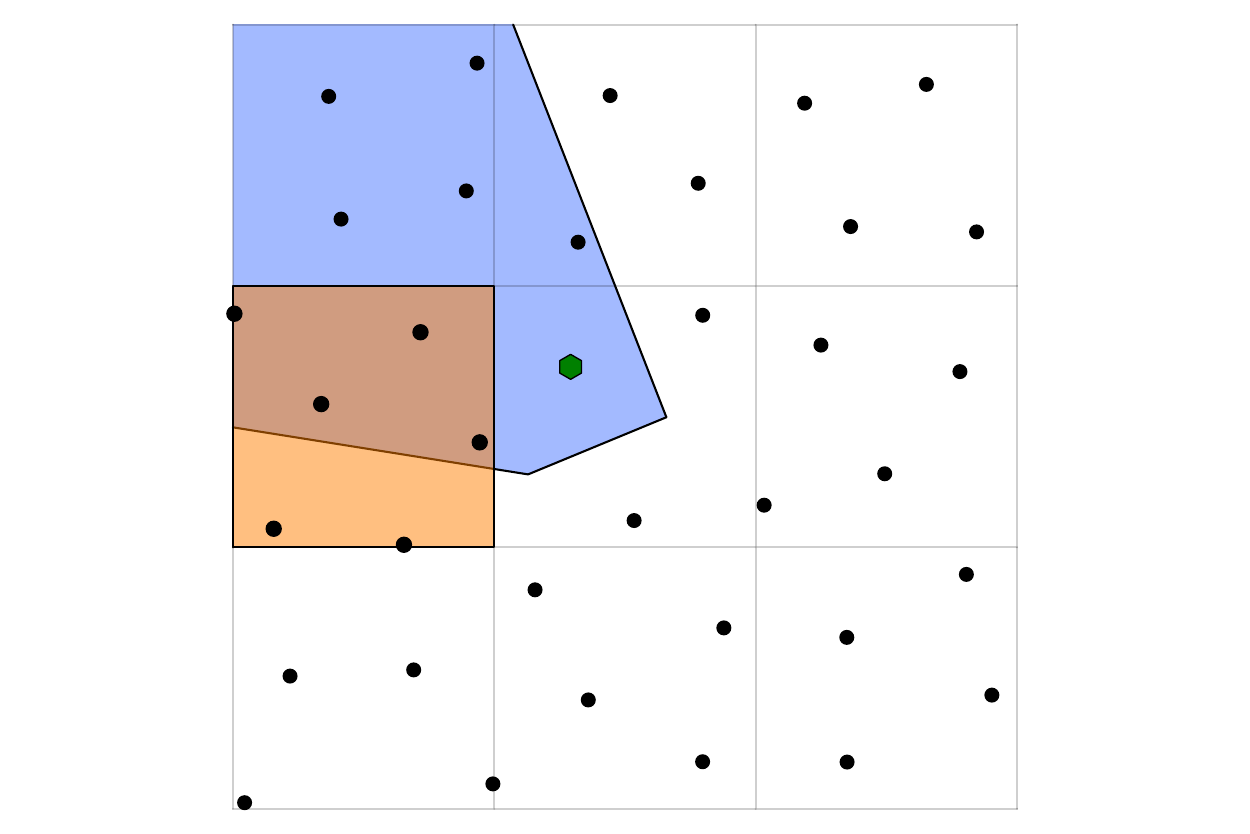}
		\end{subfigure}
		
		\begin{subfigure}{0.33\linewidth}
			\includegraphics[trim = {1cm, 0, 1cm, -2cm}, width=\linewidth]{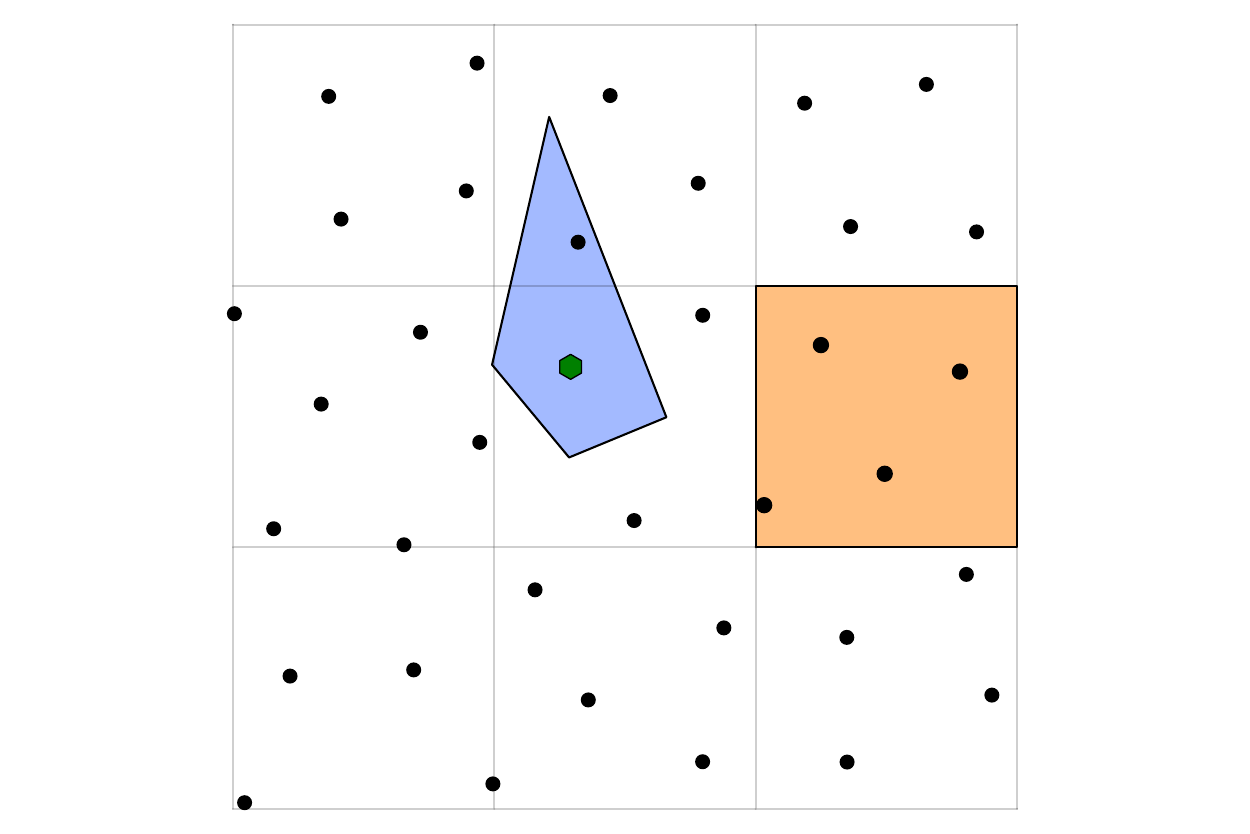}
		\end{subfigure}%
		\begin{subfigure}{0.33\linewidth}
			\includegraphics[trim = {1cm, 0, 1cm, -2cm}, width=\linewidth]{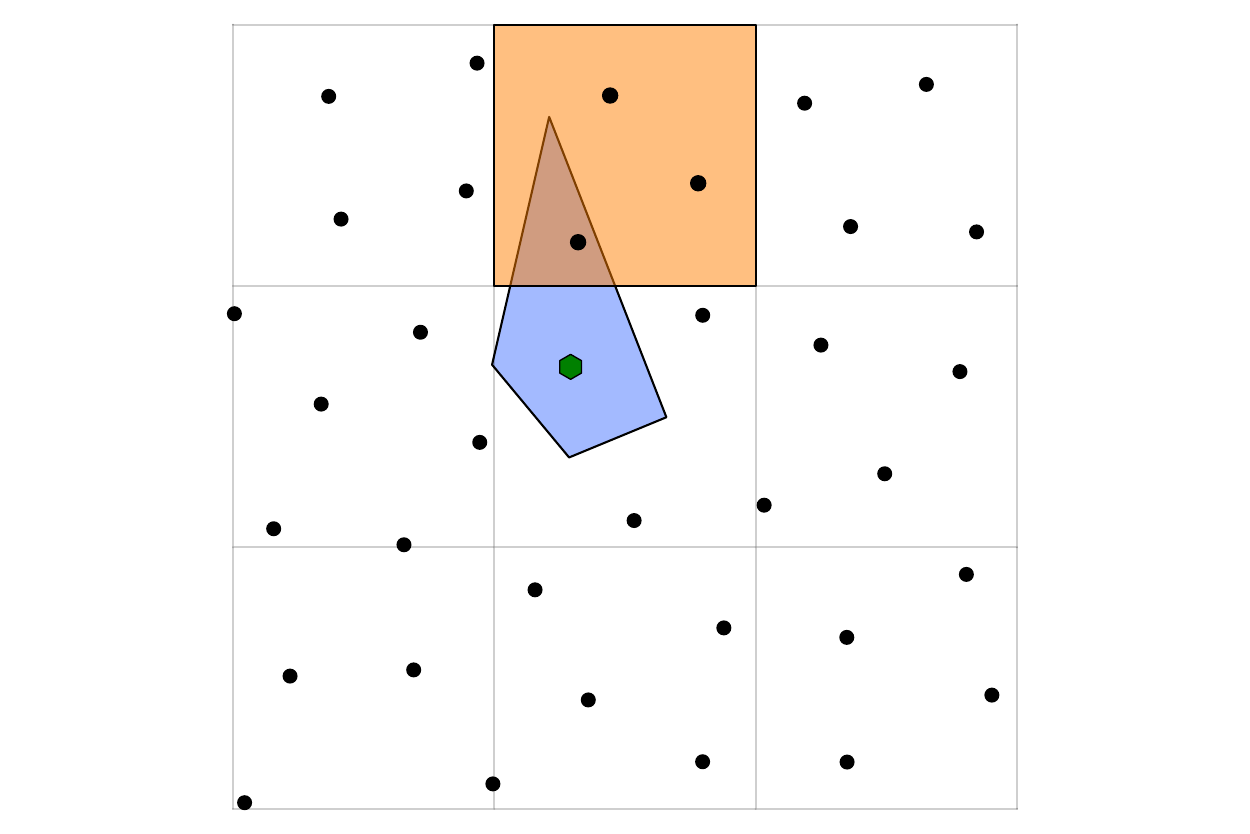}
		\end{subfigure}
		\begin{subfigure}{0.33\linewidth}
			\includegraphics[trim = {1cm, 0, 1cm, -2cm}, width=\linewidth]{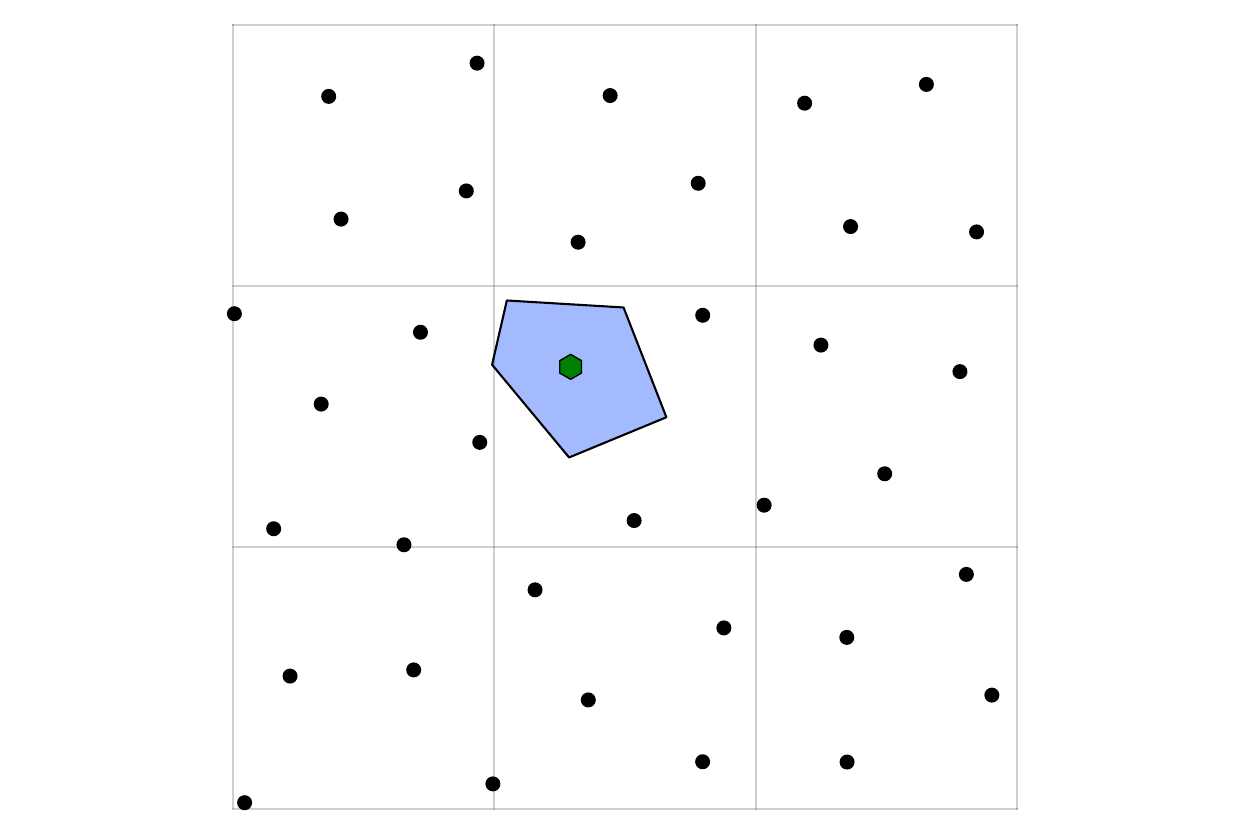}
		\end{subfigure}
		\caption{A Voronoi cell for the green particle in the center can be obtained by intersecting $N-1$ half-planes, one for each other seed. The cell list structure optimizes the computation by focusing on nearby seed only. The orange and blue colors highlight the partition $P_{i,k}$ and the incomplete Voronoi cell $\omega_i^k$ respectively for $k = 0, 1, \dots, 5$.}
		\label{fig:voromeshing}
	\end{figure}
	
	In the Lagrangian approach, the seeds are displaced with the fluid velocity. Therefore, the Voronoi mesh needs to be \textit{regenerated} at each time step with possible changes in the mesh connectivity. Starting from the second time step, the process of mesh generation is further accelerated by initializing 
	\begin{equation}
		\omega^{0}_i = \bigcap_{j \in \old{N}_i } \left\{ \xx \in \Omega : |\xx - \xx_i| < |\xx - \xx_j|  \right\},
	\end{equation}
	where $\old{N}_i$ is the set of neighbors of $\omega_i$ from the previous mesh configuration. With this heuristics, the number of non-trivial half-plane cuts is reduced to bare minimum. In the context of numerical simulations on moving Voronoi tessellations, the method has the following advantages.
	\begin{itemize}
		\item If $\diam(\omega_i/\delta r)$ is bounded, $O(N)$ complexity is guaranteed in the mesh generation process.
		\item Since the cell constructions are independent processes, the method can be easily parallelized. 
		\item This technique works both in 2 and 3 dimensions.
	\end{itemize}

	\section{Lagrangian Voronoi Method} \label{sec.numscheme}
	In this section, we describe and analyze the discrete gradient operator on moving Voronoi grids for a generic smooth function. Then, we present the semi-discrete in space numerical scheme, and finally we provide the details of the semi-implicit time marching algorithm. For the sake of clarity, we first assume no viscous terms in the velocity equation \eqref{eqn.pde_v}, hence focusing on the incompressible Euler model. Viscous forces will be consistently considered in the numerical scheme and presented at the end of this section.

	\subsection{Discrete gradient operator}
	Let us assume that $p(\xx)$ is a smooth function defined on $\Omega$, and $p_i = p(\xx_i)$ are its point values. In order to approximate $\nabla p(\xx_i)$, we use the following representation of the identity matrix:
	\begin{equation}
		\mathbb{I} = \frac{1}{|\omega_i|} \int_{\partial \omega_i}  (\xx - \xx_i) \otimes \bm{n} \, \dd{S},
	\end{equation}
	which holds for every cell $\omega_i$ by virtue of the divergence theorem. For any interior cell $\omega_i$, using the definition \eqref{eqn.nv} we obtain:
	\begin{align}
			\nabla p(\xx_i) &= \frac{1}{|\omega_i|} \int_{\partial \omega_i}  (\xx - \xx_i) \; \left(\nabla p(\xx_i) \cdot \bm{n} \right) \, \dd{S} \nonumber\\
			&= -\frac{1}{|\omega_i|} \sum_j \frac{|\Gamma_{ij}|}{r_{ij}}(\bm{m}_{ij} - \xx_i) \left(\nabla p(\xx_i) \cdot \xx_{ij} \right)\nonumber\\
			& \approx -\frac{1}{|\omega_i|} \sum_j \frac{|\Gamma_{ij}|}{r_{ij}}p_{ij}(\bm{m}_{ij} - \xx_i) =: \sgrad{p}{i},
		\label{eq:sgrad}
	\end{align}
	where $p_{ij} = p_i - p_j$. This is the definition of the discrete gradient operator $\sgrad{p}{i}$. It is clear that this approximation is exact for polynomials of first degree. Additionally, we can provide the following error estimate.
	\begin{theorem}
		Let $p(\xx) \in \mathcal{C}^2$, i.e. $p$ is twice continuously differentiable, and let assume $d=2$ in two space dimensions. Then for any interior cell $\omega_i$ we have that
		\begin{equation}
			|\nabla p(\xx_i) - \sgrad{p}{i}| \leq 2d \, \|\nabla^2 p \|_{\infty} \diam(\omega_i).
			\label{eq:serror}
		\end{equation}
	\end{theorem}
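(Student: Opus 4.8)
The plan is to measure $\sgrad{p}{i}$ against the \emph{exact} intermediate identity already derived in \eqref{eq:sgrad}, where the sole approximation was the replacement of the directional derivative $\nabla p(\xx_i)\cdot\xx_{ij}$ by the finite difference $p_{ij}$. Since the two expressions carry the identical facet weights $|\omega_i|^{-1}|\Gamma_{ij}|r_{ij}^{-1}(\mm_{ij}-\xx_i)$, subtracting them localizes the error onto each facet through the scalar discrepancy $e_{ij} := \nabla p(\xx_i)\cdot\xx_{ij} - p_{ij}$, giving
\begin{equation*}
\nabla p(\xx_i) - \sgrad{p}{i} = -\frac{1}{|\omega_i|}\sum_j \frac{|\Gamma_{ij}|}{r_{ij}}\,(\mm_{ij}-\xx_i)\,e_{ij}.
\end{equation*}

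Next I would control $e_{ij}$ by Taylor's theorem. Because $\Omega$ is convex the whole segment $[\xx_i,\xx_j]$ lies in $\Omega$, so the second-order expansion of $p$ about $\xx_i$ with Lagrange remainder yields $p_{ij} = \nabla p(\xx_i)\cdot\xx_{ij} - \tfrac12\,\xx_{ij}^{\top}\nabla^2 p(\bm{\xi}_{ij})\,\xx_{ij}$ for some $\bm{\xi}_{ij}\in[\xx_i,\xx_j]$, whence $e_{ij}=\tfrac12\,\xx_{ij}^{\top}\nabla^2 p(\bm{\xi}_{ij})\,\xx_{ij}$ and $|e_{ij}|\le \tfrac12\|\nabla^2 p\|_\infty\,r_{ij}^2$. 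Combining this with the triangle inequality and the elementary bound $|\mm_{ij}-\xx_i|\le\diam(\omega_i)$ — valid since both the facet midpoint and the seed lie in $\overline{\omega_i}$ — produces
\begin{equation*}
|\nabla p(\xx_i) - \sgrad{p}{i}| \le \frac{\|\nabla^2 p\|_\infty}{2\,|\omega_i|}\,\diam(\omega_i)\sum_j |\Gamma_{ij}|\,r_{ij}.
\end{equation*}

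The decisive step, which I expect to be the main obstacle, is to rewrite the residual sum $\sum_j|\Gamma_{ij}|\,r_{ij}$ as a clean multiple of $|\omega_i|$, so that the factor $1/|\omega_i|$ cancels and only a single power of $\diam(\omega_i)$ survives; a crude estimate here would instead leave the surface-to-volume ratio $|\partial\omega_i|/|\omega_i|$, which does not match \eqref{eq:serror}. To obtain the exact identity I would decompose the convex polytope $\omega_i$ into the pyramids sharing the apex $\xx_i$ with bases $\Gamma_{ij}$. The Voronoi orthogonality \eqref{eqn.nv} places each facet on the perpendicular bisector of the segment $\xx_i\xx_j$, so the apex-to-facet height is exactly $r_{ij}/2$; the pyramid volume is therefore $\tfrac{1}{d}|\Gamma_{ij}|\cdot\tfrac{r_{ij}}{2}$, and summation gives $\sum_j|\Gamma_{ij}|\,r_{ij}=2d\,|\omega_i|$. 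Substituting this collapses the estimate to $d\,\|\nabla^2 p\|_\infty\,\diam(\omega_i)\le 2d\,\|\nabla^2 p\|_\infty\,\diam(\omega_i)$, which is \eqref{eq:serror}.

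I note that none of these steps actually uses $d=2$: the pyramid decomposition, the Hessian remainder bound, and the diameter estimate are all dimension-agnostic, so the restriction to two space dimensions in the statement is only for concreteness, and the same argument establishes the bound for general $d$.
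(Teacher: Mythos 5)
Your proof is correct and follows essentially the same route as the paper's: localize the error facet-by-facet via Taylor expansion, bound $|\mm_{ij}-\xx_i|$ by $\diam(\omega_i)$, and absorb $\sum_j |\Gamma_{ij}|\,r_{ij}$ through the volume identity $|\omega_i|=\frac{1}{2d}\sum_j |\Gamma_{ij}|\,r_{ij}$, which the paper simply states and you justify by the pyramid decomposition with apex-to-facet height $r_{ij}/2$. The only differences are refinements, not discrepancies: retaining the factor $\tfrac12$ in the Lagrange remainder gives you the sharper constant $d$ in place of $2d$, and your observation that nothing in the argument uses $d=2$ is also accurate.
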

	\begin{proof}
		Using the idea of \eqref{eq:sgrad}, we have
		\begin{equation}
			\begin{split}
				|\nabla p(\xx_i) - \sgrad{p}{i}| &\leq \frac{1}{|\omega_i|} \sum_j |\Gamma_{ij}| \, r_{ij} \, \; \|\nabla^2 p \|_{\infty} \diam (\omega_i),
			\end{split}
		\end{equation}
		and we get the desired result by combining this with a formula for the Voronoi cell volume:
		\begin{equation}
			|\omega|_i = \frac{1}{2d}\sum_j |\Gamma_{ij}| \, r_{ij}.
		\end{equation}
	\end{proof}
	\begin{remark}
		Intriguingly, the error term does not depend on the mesh quality. We do not require to bound the number of facets or the ratios $\diam(\omega_i)/r_{ij}$.
	\end{remark}
	\begin{remark}
		If the Voronoi mesh is rectangular with the sides $\Delta x$ and $\Delta y$, then \eqref{eq:sgrad} becomes equivalent to the central finite difference operator
			\begin{equation}
				\sgrad{p}{i} =\left( \begin{array}{c} 
					\frac{p(x+\Delta x, y) - p(x-\Delta x,y)}{2\Delta x} \\[2.5mm]
					\frac{p(x, y+\Delta y) - p(x,y-\Delta y)}{2\Delta y}
				\end{array} \right),
			\end{equation}
		which has second order of accuracy. As we shall demonstrate in Section \ref{sec:tagr}, it is possible to obtain a super-linear convergence in velocity if a rectangular grid is used in the initialization. This is understandable, because with decreasing spatial step, the mesh eventually becomes "locally rectangular" throughout the entire simulation time. 
	\end{remark}
	
	
	\begin{remark}
		By adding a ``clever zero'', we can obtain an alternative formulation (valid for interior cells):
		\begin{equation}
			\begin{split}
				&\sgrad{p}{i} = \sgrad{p}{i} - \int_{\partial \omega_i} p_i \bm{n} \, \dd S= -\frac{1}{|\omega_i|} \sum_j \frac{|\Gamma_{ij}|}{r_{ij}}\left(p_{ij}(\bm{m}_{ij} - \xx_i) +p_i \xx_{ij} \right) \\
				& =-\frac{1}{|\omega_i|} \sum_j \frac{|\Gamma_{ij}|}{r_{ij}}\left(p_{ij}(\bm{m}_{ij} - \oxx_{ij}) + \overline{p}_{ij} \xx_{ij}\right),
			\end{split}
		\end{equation}
		where $\overline{p}_{ij} = \frac{1}{2}(p_i + p_j)$ and $\oxx_{ij}$ is given by \eqref{eqn.xij}. This form of the gradient approximation is used in the work of Serrano et al. \cite{serrano2001thermodynamically} and Springel \cite{springel2011hydrodynamic}.
	\end{remark}
	\begin{remark}
		For a boundary-touching cell, with the facet $\Gamma_{ij} \subset \partial \Omega$, an additional term must be considered:
		\begin{equation}
			\nabla p(\xx_i) = \sgrad{p}{i} + \frac{1}{|\omega_i|} \int_{\Gamma_{ij}}  (\xx - \xx_i) \; \left(\nabla p(\xx_i) \cdot \bm{n} \right) \, \dd{S} + O(\diam(\omega_i)).
		\end{equation}
		For a Robin boundary condition of the type
		\begin{equation}
			ap + b\pdv{p}{n} = 0,
		\end{equation}
		it is reasonable to approximate the gradient as 
		\begin{equation}
			\nabla p(\xx_i) = \sgrad{p}{i} - \frac{|\Gamma_i|}{|\omega_i|} \frac{ap_i}{b} (\bm{m}_{ij} - \xx_i) + O(\diam(\omega_i)).
			\label{eq:grad_bdry}
		\end{equation}
		Note that the extra term in \eqref{eq:grad_bdry} disappears for a homogeneous Neumann condition.
	\end{remark}
	
	\bigskip
	Next, we consider a different discretization of the gradient operator. This takes the form
	\begin{equation}
		\wgrad{p}{i} = \frac{1}{|\omega_i|} \sum_j \frac{|\Gamma_{ij}|}{r_{ij}}p_{ij}(\bm{m}_{ij} - \xx_j),
		\label{eq:wgrad}
	\end{equation}
	which is motivated by the following ``discrete integration by parts''.
	\begin{theorem} Let $p(\xx)\in \mathcal{C}^2$ and $q(\xx)\in \mathcal{C}^2$, and let us consider the discrete gradient operators \eqref{eq:sgrad} and \eqref{eq:wgrad}. The following identity holds true for $i=1,2,\ldots,N$:
		\begin{equation}
			\sum_i |\omega_i| (q_i \sgrad{p}{i} + p_i \wgrad{q}{i}) = \sum_{i} p_{i} q_{i} \bm{S}_i.
			\label{eq:discrete_ibp}
		\end{equation}  
	\end{theorem}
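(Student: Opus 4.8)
The plan is to substitute the definitions \eqref{eq:sgrad} and \eqref{eq:wgrad} directly into the left-hand side, observe that the volume weights $|\omega_i|$ cancel against the $1/|\omega_i|$ in each operator, and reduce everything to a single double sum over neighbouring pairs. Writing $c_{ij} := |\Gamma_{ij}|/r_{ij}$, the two contributions read
\begin{equation}
    \sum_i |\omega_i| q_i \sgrad{p}{i} = -\sum_{i,j} c_{ij}\, q_i\, p_{ij}\,(\bm{m}_{ij}-\xx_i), \qquad \sum_i |\omega_i| p_i \wgrad{q}{i} = \sum_{i,j} c_{ij}\, p_i\, q_{ij}\,(\bm{m}_{ij}-\xx_j).
\end{equation}
The whole argument then rests on the elementary symmetries under the exchange $i \leftrightarrow j$: the scalar weight $c_{ij}$ and the facet midpoint $\bm{m}_{ij}$ are symmetric, whereas $p_{ij}$, $q_{ij}$, $\xx_{ij}$ and $\bm{n}_{ij}$ flip sign.

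The key algebraic step is to relabel the summation indices $i \leftrightarrow j$ in the first sum. Using $c_{ji}=c_{ij}$, $\bm{m}_{ji}=\bm{m}_{ij}$ and $p_{ji}=-p_{ij}$, it turns into $\sum_{i,j} c_{ij}\, q_j\, p_{ij}\,(\bm{m}_{ij}-\xx_j)$, so that both sums now share the common geometric factor $(\bm{m}_{ij}-\xx_j)$. Adding them, the scalar prefactor collapses:
\begin{equation}
    q_j p_{ij} + p_i q_{ij} = q_j(p_i-p_j) + p_i(q_i-q_j) = p_i q_i - p_j q_j,
\end{equation}
so the left-hand side equals $\sum_{i,j} c_{ij}\,(\bm{m}_{ij}-\xx_j)\,(p_iq_i - p_jq_j)$, which is nothing but $\sum_i |\omega_i| \wgrad{(pq)}{i}$, the discrete analogue of $\int_\Omega \nabla(pq)\,\dd\xx$.

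To finish, I would symmetrize this last double sum a second time: relabelling $i\leftrightarrow j$ and averaging the two representations cancels the midpoint terms and replaces $(\bm{m}_{ij}-\xx_j)$ by $\tfrac{1}{2}(\xx_i-\xx_j)=\tfrac{1}{2}\xx_{ij}$. Invoking the orthogonality relation \eqref{eqn.nv} in the form $c_{ij}\,\xx_{ij} = -|\Gamma_{ij}|\,\bm{n}_{ij}$ converts the bulk sum into a sum of facet fluxes, and folding the two halves together with $\bm{n}_{ji}=-\bm{n}_{ij}$ gives
\begin{equation}
    \sum_i |\omega_i|\,(q_i \sgrad{p}{i} + p_i \wgrad{q}{i}) = -\tfrac{1}{2}\sum_{i,j}(p_iq_i-p_jq_j)\,|\Gamma_{ij}|\,\bm{n}_{ij} = -\sum_i p_iq_i \sum_j |\Gamma_{ij}|\,\bm{n}_{ij}.
\end{equation}
The inner sum is the discrete flux through the interior facets of $\omega_i$; applying the divergence theorem to the constant field on $\omega_i$, where $\int_{\partial\omega_i}\bm{n}\,\dd S = \bm{0}$, this interior flux equals minus the flux through $\partial\omega_i\cap\partial\Omega$, i.e. $-\bm{S}_i$, yielding the claimed right-hand side $\sum_i p_iq_i\,\bm{S}_i$.

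The hard part is bookkeeping rather than depth: one must apply the $i\leftrightarrow j$ relabellings consistently and track which factors change sign. The genuinely delicate point is the treatment of boundary-touching cells, since it is precisely the facets on $\partial\Omega$ that survive in $\bm{S}_i$ (for a purely interior cell the interior facets already close up and the residual flux vanishes). Because the operators \eqref{eq:sgrad}–\eqref{eq:wgrad} were derived for interior cells, the final identification of the leftover surface flux with $\bm{S}_i$ is exactly where the boundary facets must be reinstated, and this is the step I would check most carefully.
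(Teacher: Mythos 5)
Your proof is correct and follows essentially the same route as the paper's: substitute both operators, use the $i\leftrightarrow j$ relabelling so that only the $p_iq_i$ terms survive in the form $\sum_{i,j}\frac{|\Gamma_{ij}|}{r_{ij}}p_iq_i\,\xx_{ij}$, and then convert this to $\sum_i p_iq_i\,\bm{S}_i$ via $\bm{n}_{ij}=-\xx_{ij}/r_{ij}$ together with the closedness of each cell boundary. The differences are purely organizational --- you pass through the discrete product rule $\sum_i|\omega_i|\,\wgrad{(pq)}{i}$ and symmetrize twice where the paper cancels the cross terms by antisymmetry in a single pass --- and your explicit identification of $\bm{S}_i$ with the flux through $\partial\omega_i\cap\partial\Omega$ (the reading that makes the statement non-vacuous, since the integral in \eqref{eqn.dS} taken over the full closed boundary would vanish) is exactly the interpretation the paper's proof uses implicitly.
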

	\begin{proof}
		After some algebra, we find:
		\begin{equation}
			\begin{split}
				&\sum_i |\omega_i| (q_i \sgrad{p}{i} + p_i \wgrad{q}{i}) = \\
				=&\sum_{i} \sum_{j} \frac{|\Gamma_{ij}|}{r_{ij}} p_i q_i \xx_{ij}
				+ \sum_{i} \sum_{j} |\Gamma_{ij}| \bigg( (\bm{m}_{ij} - \xx_i) q_i p_{j} - (\bm{m}_{ij} - \xx_j) p_i q_{j} \bigg).
			\end{split}
		\end{equation}
		The second term on the right-hand side vanishes by virtue of anti-symmetry, whereas the first one can be simplified using the definition \eqref{eqn.dS}, hence obtaining
		\begin{equation}
			\sum_{i} \sum_{j} \frac{|\Gamma_{ij}|}{r_{ij}} p_i q_i \xx_{ij} = -\sum_{i} \sum_{j} |\Gamma_{ij}|p_i q_i \bm{n}_{ij} = \sum_i p_i q_i \bm{S}_i.
		\end{equation}
	\end{proof}
	We shall refer to \eqref{eq:sgrad} and \eqref{eq:wgrad} as \emph{strong} and \textit{weak} gradient, respectively. Unfortunately, our investigation reveals that the weak gradient is no longer first-order exact, but it exhibits an oscillatory nature and cannot be used to obtain reliable point-wise approximations of the continuous gradient operator, especially on highly irregular meshes. See Figure \ref{fig:gradients} for a comparison between the two gradient operators. The situation is similar to SPH, where a pair of gradient operators is defined, such that they satisfy a discrete integration by parts. Enforcing some degrees of exactness on one operator does not carry over to the other one \cite{violeau2016smoothed}.
	\begin{figure}[ht!]
		\centering
		\begin{subfigure}{0.25\linewidth}
			\centering
			\includegraphics[width=\linewidth]{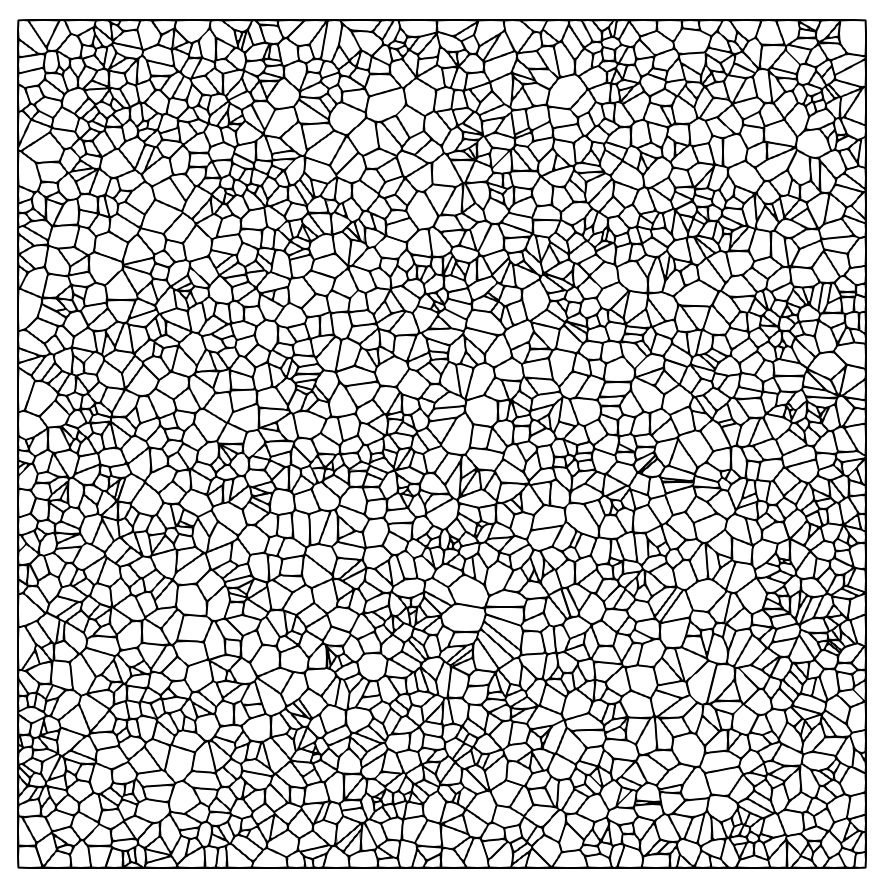}
			\caption{mesh}
		\end{subfigure}%
		\begin{subfigure}{0.25\linewidth}
			\centering
			\includegraphics[width=\linewidth]{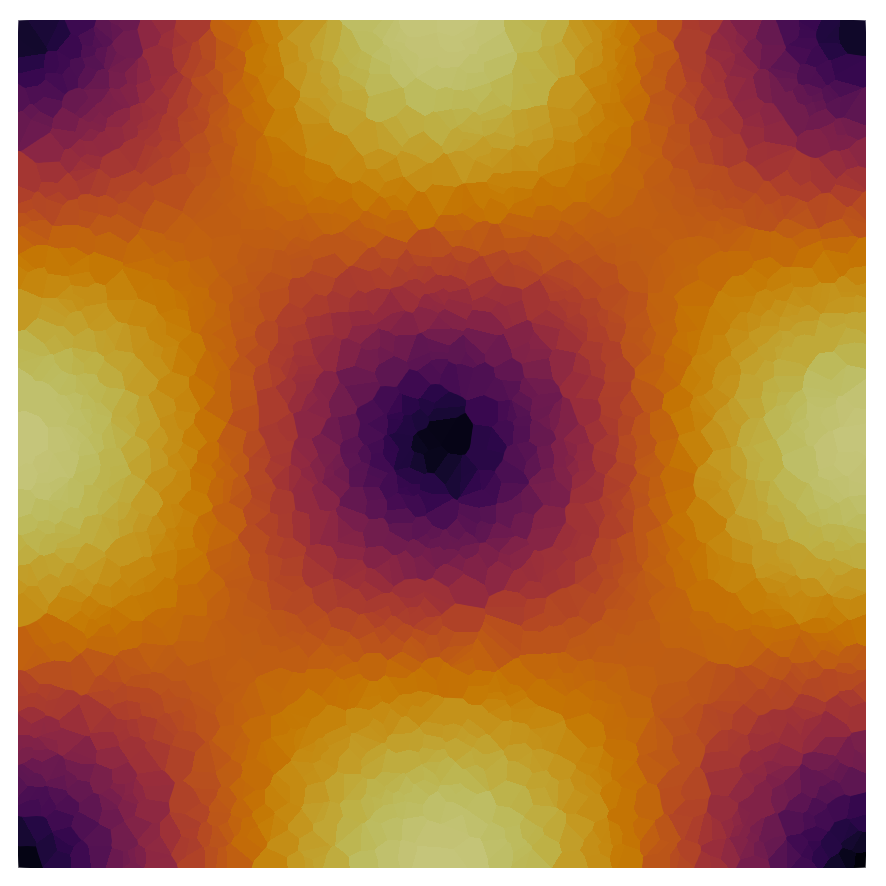}
			\caption{exact}
		\end{subfigure}%
		\begin{subfigure}{0.25\linewidth}
			\centering
			\includegraphics[width=\linewidth]{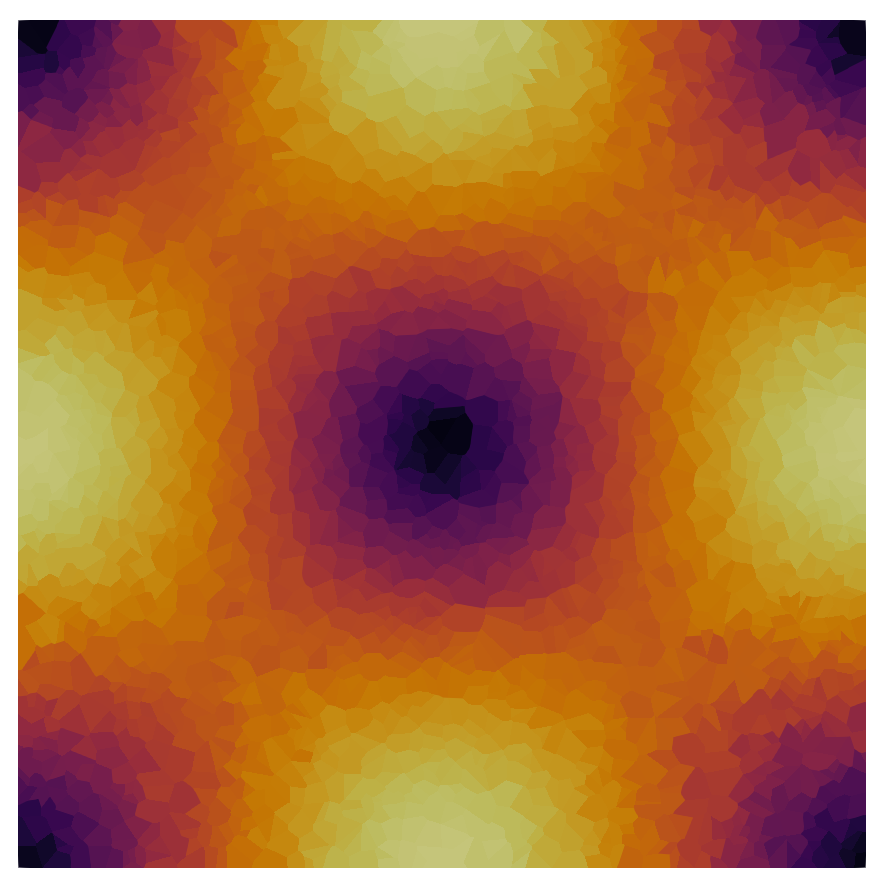}
			\caption{strong}
		\end{subfigure}%
		\begin{subfigure}{0.25\linewidth}
			\centering
			\includegraphics[width=\linewidth]{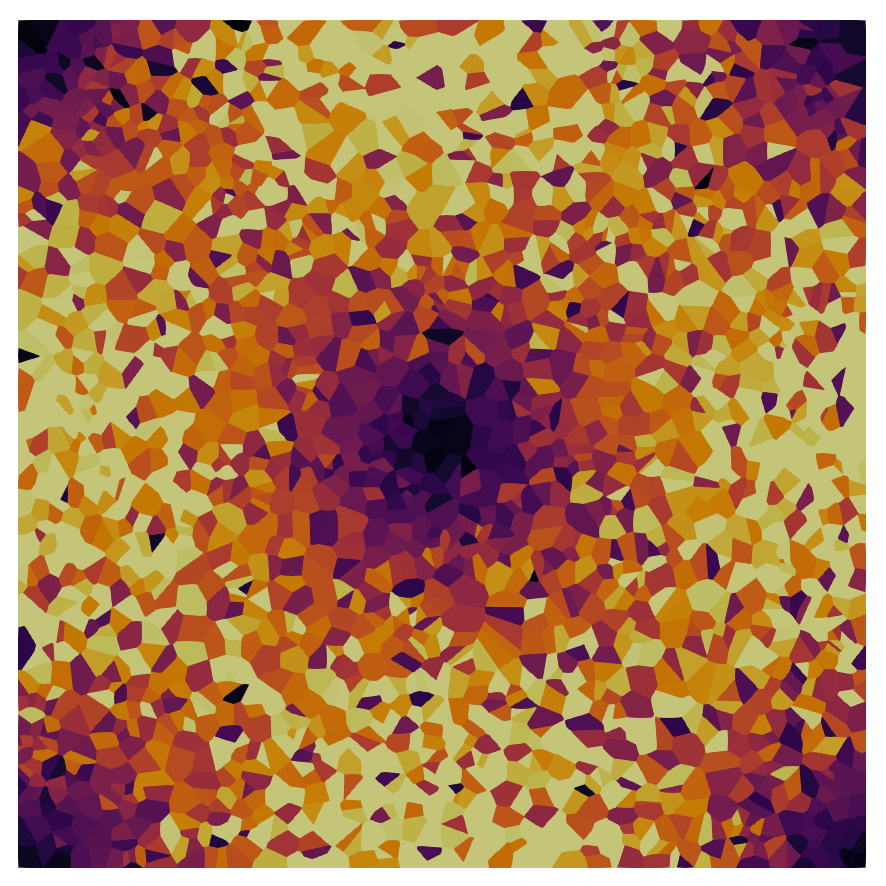}
			\caption{weak}
		\end{subfigure}
		\includegraphics[width=0.3\linewidth]{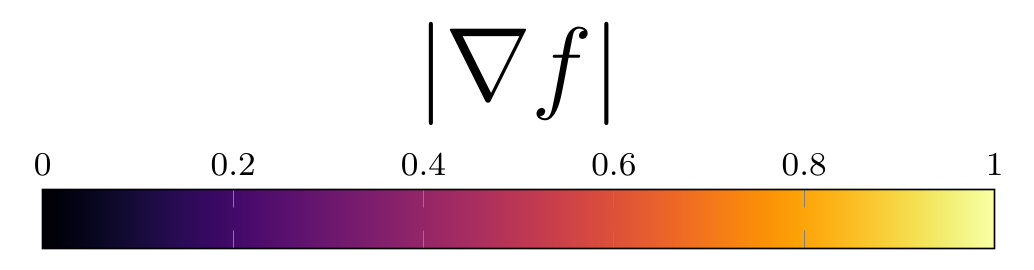}
		\caption{The magnitude of $\nabla f$ on a square $\Omega=[0, 1]^2$ with an irregular Voronoi grid of 6400 cells (a) for $f = \frac{1}{\pi} \cos (\pi x) \cos (\pi y) $. The image (b) and (c) show exact solution and the strong gradient \eqref{eq:sgrad}, respectively, and they are barely distinguishable. The image (d) uses the weak gradient \eqref{eq:wgrad} and is infested with spurious noise.}
		\label{fig:gradients}
	\end{figure}
	
	Nonetheless, the weak gradient is useful to derive the variation of the cell volume $\delta |\omega_i|$. By virtue of \eqref{eq:Omega_wrt_j} and \eqref{eq:Omega_wrt_i}, we get
	\begin{equation}
		\begin{split}
			&\delta |\omega_i| = \sum_j \pdv{|\omega_i|}{\xx_j} \cdot \delta \xx_j = \sum_{j \neq i} \pdv{|\omega_i|}{\xx_j} \cdot \delta \xx_j - \sum_{j\neq i} \pdv{|\omega_i|}{\xx_j} \cdot \delta \xx_i - \bm{S}_i \cdot \delta \xx_i \\
			&= |\omega_i| \, \wdiv{\delta \xx}{i} - \bm{S}_i \cdot \delta \xx_i.
		\end{split}
		\label{eq:cell_diff}
	\end{equation}
	
	\subsection{Semi-discrete space discretization}
	We consider the seeds of a moving Voronoi mesh $\xx_i$ being the material particles with point-values of velocity $\vv_i$, density $\rho_i$ and pressure $p_i$. The mathematical model is given by the velocity equation \eqref{eqn.pde_v}, the divergence constraint in weak form \eqref{eqn.pde_divv}, and the trajectory equation \eqref{eqn.pde_x}. For the first exposition of the method we shall ignore the viscous terms and only include them later. 
	
	For each cell $\omega_i$, a natural semi-discrete space discretization writes
	\begin{subequations}
		\begin{align}
			&	\frac{\text{d}\vv_i}{\text{d}t} = -\frac{1}{\rho_i} \sgrad{p}{i} \label{eqn.bomom_sd} \\
			&\sum_i |\omega_i| \, \vv_i \cdot \sgrad{\varphi}{i} = 0, \quad \forall \, \varphi(\xx), \label{eqn.divv_sd} \\
			&	\frac{\text{d}\xx_i}{\text{d}t} = \vv_i. \label{eqn.x_sd}
		\end{align}
		\label{eq:discrete_sys}
	\end{subequations}
	
	\begin{theorem}\label{th.omega_E}
		Any solution of the semi-discrete scheme \eqref{eq:discrete_sys} satisfies the following properties.
		\begin{itemize}
			\item The individual cell volumes $|\omega_i|$ are conserved:
			\begin{equation}
				\frac{\text{d}|\omega_i|}{\text{d}t} = 0. \label{eqn.domega}
			\end{equation}
			\item The energy
			\begin{equation}
				E = \frac{1}{2} \sum_i |\omega_i| \, \rho_i \, \Vert\vv_i\Vert^2
				\label{eqn.Etot}
			\end{equation}
			is conserved.
		\end{itemize}
	\end{theorem}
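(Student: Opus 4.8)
The plan is to prove the two properties in sequence, treating volume conservation first and then deducing energy conservation from it.

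\emph{Volume conservation.} I would start from the first-variation formula \eqref{eq:cell_diff} and read the variation as an evolution along the flow, i.e.\ replace $\delta \xx_j$ by the nodal velocity $\vv_j$ in accordance with the trajectory equation \eqref{eqn.x_sd}. This gives
\begin{equation}
	\frac{\text{d} |\omega_i|}{\text{d} t} = |\omega_i| \, \wdiv{\vv}{i} - \bm{S}_i \cdot \vv_i ,
\end{equation}
and it remains to show that the right-hand side vanishes for every $i$. The obstacle is that the only available form of incompressibility, \eqref{eqn.divv_sd}, is a single \emph{global}, test-weighted statement built from the \emph{strong} gradient, whereas the quantity above is a \emph{local} one built from the \emph{weak} divergence; these two discrete operators differ. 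The device that reconciles them is the discrete integration-by-parts identity \eqref{eq:discrete_ibp}.

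Concretely, I would apply \eqref{eq:discrete_ibp} componentwise with $p = \varphi$ and with $q$ ranging over the Cartesian components of $\vv$, then sum over the components. The strong-gradient contribution collected in this way is exactly $\sum_i |\omega_i| \, \vv_i \cdot \sgrad{\varphi}{i}$, which is annihilated by the constraint \eqref{eqn.divv_sd}, leaving
\begin{equation}
	\sum_i \varphi_i \left( |\omega_i| \, \wdiv{\vv}{i} - \vv_i \cdot \bm{S}_i \right) = 0 .
\end{equation}
The decisive move is localization: since \eqref{eqn.divv_sd} holds for \emph{every} admissible set of nodal values $\varphi_i$, I may take $\varphi$ concentrated on a single seed, forcing the bracket to vanish cell by cell. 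Because that bracket coincides with the right-hand side of the volume-derivative formula, this yields $\text{d}|\omega_i|/\text{d}t = 0$. I expect the genuine difficulty to lie precisely here, in two linked points: that testing ``for all $\varphi$'' legitimately means for all nodal vectors, so that localization is permitted; and that the surface term $\vv_i\cdot\bm{S}_i$ produced by \eqref{eq:discrete_ibp} matches, cell by cell, the one generated by \eqref{eq:cell_diff} — trivial for interior cells where $\bm{S}_i = \bm{0}$, but for boundary-touching cells relying on the no-penetration condition being faithfully encoded in the discrete operators.

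\emph{Energy conservation.} This follows quickly. Writing the energy as $E = \frac{1}{2}\sum_i M_i \Vert\vv_i\Vert^2$ through the constant cell mass $M_i = \rho_i |\omega_i|$, differentiation gives
\begin{equation}
	\frac{\text{d} E}{\text{d} t} = \sum_i M_i \, \vv_i \cdot \frac{\text{d}\vv_i}{\text{d}t} = -\sum_i |\omega_i| \, \vv_i \cdot \sgrad{p}{i},
\end{equation}
where the momentum equation \eqref{eqn.bomom_sd} and the relation $M_i/\rho_i = |\omega_i|$ are used in the last equality. The final observation is that the right-hand side is exactly the left-hand side of the divergence constraint \eqref{eqn.divv_sd} evaluated at the admissible choice $\varphi = p$; it therefore vanishes, giving $\text{d}E/\text{d}t = 0$. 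The pleasant point is that the same weak incompressibility condition that enforces volume conservation is precisely what makes the discrete pressure gradient do no net work on the fluid.
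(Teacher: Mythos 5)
Your proposal is correct and follows essentially the same route as the paper's proof: the volume-derivative formula \eqref{eq:cell_diff}, the discrete integration-by-parts identity \eqref{eq:discrete_ibp} combined with the constraint \eqref{eqn.divv_sd}, localization in $\varphi$ to get conservation cell by cell, and then energy conservation from the momentum equation \eqref{eqn.bomom_sd} with the admissible test choice $\varphi = p$. Your added caution about the surface terms is harmless but unnecessary, since both \eqref{eq:cell_diff} and \eqref{eq:discrete_ibp} contain literally the same term $\varphi_i\,\vv_i\cdot\bm{S}_i$, so they cancel identically without invoking the no-penetration condition.
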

	\begin{proof}
		Using the variation \eqref{eq:cell_diff}, the time evolution of $\omega_i$ is given by
		\begin{equation}
			\frac{\text{d}|\omega_i|}{\text{d}t} = |\omega_i|\wdiv{\vv}{i} - \bm{S}_i \cdot \vv_i.
		\end{equation}
		Multiplication by a smooth test function $\varphi(\xx)$ and integration over all cells, leads to 
		\begin{equation}
			\sum_i \frac{\text{d}|\omega_i|}{\text{d}t} \, \varphi_i = \sum_i |\omega_i| \wdiv{\vv}{i} \varphi_i - \sum_i \varphi_i \vv_i \cdot \bm{S}_i.
		\end{equation}
		Using the identity \eqref{eq:discrete_ibp} and the semi-discrete divergence constraint \eqref{eqn.divv_sd}, from the above relation we find that
		\begin{equation}
			\sum_i \frac{\text{d}|\omega_i|}{\text{d}t} \, \varphi_i = -\sum_i |\omega_i| \vv_i \cdot \sgrad{\varphi}{i} = 0, 
		\end{equation}
		which implies \eqref{eqn.domega} after localization.
		
		In light of the previous result we deduce that the cell volume does not depend on time, thus the time evolution of the total energy \eqref{eqn.Etot} yields
		\begin{equation}
			\frac{\text{d} E }{\text{d}t} =\sum_i |\omega_i| \, \rho_i \vv_i \cdot \frac{\text{d} \vv_i}{\text{d}t}.
		\end{equation}
		Inserting the balance of momentum \eqref{eqn.bomom_sd} and using the incompressibility constraint \eqref{eqn.divv_sd}, we obtain
		\begin{equation}
			\frac{\text{d} E }{\text{d}t} = - \sum_i |\omega_i| \, \vv_i \cdot \sgrad{p}{i} = 0.
		\end{equation}
	\end{proof}

	\subsection{Semi-implicit time discretization}
	The idea behind the semi-implicit time discretization for incompressible fluid is fairly standard, although it is here applied for the first time in the context of Lagrangian Voronoi Methods. It can be understood as Chorin iterations \cite{chorin1997}, or equivalently, the projection of the velocity field onto the solenoidal space in the sense of Helmholtz decomposition. This requires the solution of an implicit system on the pressure, globally defined on the entire computational domain, which is obtained by combining the velocity equation \eqref{eqn.pde_v} with divergence-free constraint \eqref{eqn.pde_divv}. On the other hand, the mesh motion is explicit, using the trajectory equation \eqref{eqn.pde_x}.
	
	Let us consider the time interval $T=[0,t_f]$, with $t\in T$ and $t_f$ being the final time of the simulation. The time interval is discretized by a sequence of time steps $\Delta t=t^{n+1}-t^n$, with $t^n$ denoting the current time. We proceed by introducing a semi-implicit time discretization for the semi-discrete scheme \eqref{eq:discrete_sys}.
	
	The Voronoi seed positions are advected explicitly by the velocity field, that is
	\begin{equation}
		\xx_i^{n+1} = \xx_i^{n} + \Delta t \, \vv_i^n,
    \label{eqn.x_fd}
	\end{equation}
	and the Voronoi mesh is regenerated at the new time level $t^{n+1}$, hence obtaining the new tessellation. All geometry related quantities can now be computed, e.g. facet lengths $|\Gamma_{ij}|^{n+1}$, inter-seed distances $r_{ij}^{n+1}$ and facet midpoints $\bm{m}_{ij}^{n+1}$. As proven in Theorem \ref{th.omega_E}, cell volumes are instead constructed to be constant in time, hence we simply have $|\omega_i|^{n+1}=|\omega_i|^{n}=|\omega_i|$.
	
	Even though this step does not modify the velocity $\vv_i^n$ \textit{per se}, it introduces a non-vanishing divergence of the velocity field through the deformation of the mesh. Therefore, the velocity needs to be updated at the next time level by the \textit{correct} pressure gradient, which can be interpreted as a Lagrangian multiplier enforcing the incompressibility constraint. To this aim, we impose that the new velocity $\vv_i^{n+1}$ and pressure $p_i^{n+1}$ satisfy
	\begin{subequations}
		\begin{align}
			&\vv_i^{n+1} = \vv_i^{n} - \frac{\Delta t}{\rho_i} \sgrad{p}{i}^{n+1}, \label{eqn.v_fd}\\
			&\sum_i |\omega_i| \, \vv_i^{n+1} \cdot \sgrad{\varphi}{i} = 0, \quad \forall \, \varphi(\xx^{n+1}). \label{eqn.divv_fd}
		\end{align}
	\label{eqn.P_system}
	\end{subequations}
	Formal substitution of the velocity equation \eqref{eqn.v_fd} into the divergence-free constraint \eqref{eqn.divv_fd}, yields an elliptic problem for the pressure
	\begin{equation}
		\sum_i \frac{|\omega_i|}{\rho_i} \sgrad{p}{i}^{n+1} \cdot \sgrad{\varphi}{i} = \frac{1}{\Delta t} \sum_i |\omega_i| \, \vv_i^{n} \cdot \sgrad{\varphi}{i}.
		\label{eq:discrete_poi}
	\end{equation}
	The associated $N\times N$ system matrix $\A$ is defined for any function $p$ and $\varphi$ to satisfy the following relation:
	\begin{equation}
		\sum_i \frac{|\omega_i|}{\rho_i} \sgrad{p}{i}^{n+1} \cdot \sgrad{\varphi}{i} = \sum_{i} \sum_{j} \varphi_i \, \A_{ij} \, p_j^{n+1}, \label{eq:matA}
	\end{equation}
	thus the matrix $\A$ is clearly symmetric. The right hand side of \eqref{eq:discrete_poi} is an $N$-element vector representing the linear functional
	\begin{eqnarray}
		\sum_i \b_i \, \varphi_i &=& \frac{1}{\Delta t} \sum_i |\omega_i| \, \vv_i^{n} \cdot \sgrad{\varphi}{i} \nonumber \\
		&=& \frac{|\omega_i|}{\Delta t} \wdiv{\vv}{i}^n \label{eq:vecb}.
	\end{eqnarray}
	The coefficients of the system can be evaluated by setting $\varphi_i = \delta_{ik}$ and $p_j = \delta_{j\ell}$ for fixed indexes $(k,\ell)$. Unfortunately, the matrix is not very sparse because $\A_{ij} \neq 0$ not only when $i,j$ are neighbors, but also when $j$ is a neighbor of a neighbor of $i$.  
	
	In the case when the fluid is incompressible and homogeneous ($\rho_i = \rho = const$), it is possible to sparsify the problem \eqref{eq:matA} through the following ``approximation of approximation'', in a direct analogy to incompressible SPH. Recalling the strong gradient definition \eqref{eq:sgrad} and using the homogeneous condition $\rho_i=\rho$, the system matrix in \eqref{eq:matA} is rewritten as follows:
	\begin{eqnarray}
		\sum_i \frac{|\omega_i|}{\rho_i} \sgrad{p}{i}^{n+1} \cdot \sgrad{\varphi}{i} &=& -\frac{1}{\rho} \sum_{i} \sum_{j} \frac{|\Gamma_{ij}|^{n+1}}{ r_{ij}^{n+1}} \, p_{ij}^{n+1} \, (\bm{m}_{ij}^{n+1} - \xx_{i}) \cdot \sgrad{\varphi}{i} \nonumber \\
		&\approx& -\frac{1}{\rho}\sum_{i} \sum_{j} \frac{|\Gamma_{ij}|^{n+1}}{r_{ij}^{n+1}} \, p_{ij}^{n+1} \, (\varphi(\bm{m}_{ij}^{n+1}) - \varphi_i) \nonumber \\
		&=& \frac{1}{\rho} \sum_{i} \sum_{j} \frac{|\Gamma_{ij}|^{n+1}}{r_{ij}^{n+1}} \, p_{ij}^{n+1} \varphi_i.
		\label{eq:approxapprox}
	\end{eqnarray}
	The term involving $\varphi(\bm{m}_{ij}^{n+1})$ disappears because of the anti-symmetry of the addends. The new matrix $\B_{ij}$ is thus defined such as
	\begin{equation}
		\sum_j \B_{ij} p_j^{n+1} = \frac{1}{\rho}\sum_{j} \frac{|\Gamma_{ij}|^{n+1}}{r_{ij}^{n+1}} \, p_{ij}^{n+1} \label{eq:matB},
	\end{equation} 
	and it is much simpler, more sparse and still symmetric compared to the matrix $\A$ in \eqref{eq:matA}. Figure \ref{fig:sparsity} depicts an illustration of the sparsity of $\B$ when compared to incompressible SPH. SILVA produces much sparser matrices, which is a great advantage compared to SPH-based methods: for SILVA, the number of non-zeros per node is less than 7, but for SPH it is about 24. We expect that the difference becomes even more pronounced in three space dimensions.
	\begin{figure}[ht!]
		\centering
		\begin{subfigure}{0.5\linewidth}
			\includegraphics[width=\linewidth]{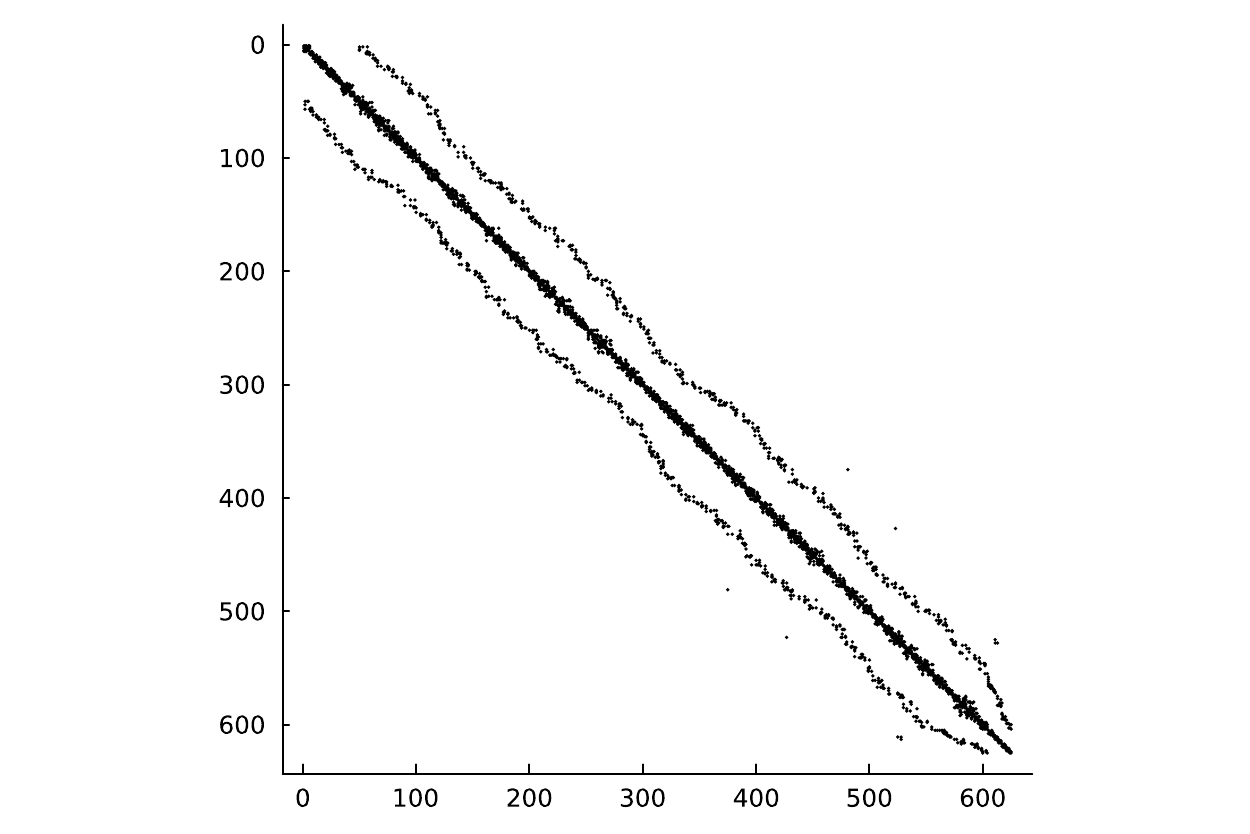}
			\subcaption{SILVA}
		\end{subfigure}%
		\begin{subfigure}{0.5\linewidth}
			\includegraphics[width=\linewidth]{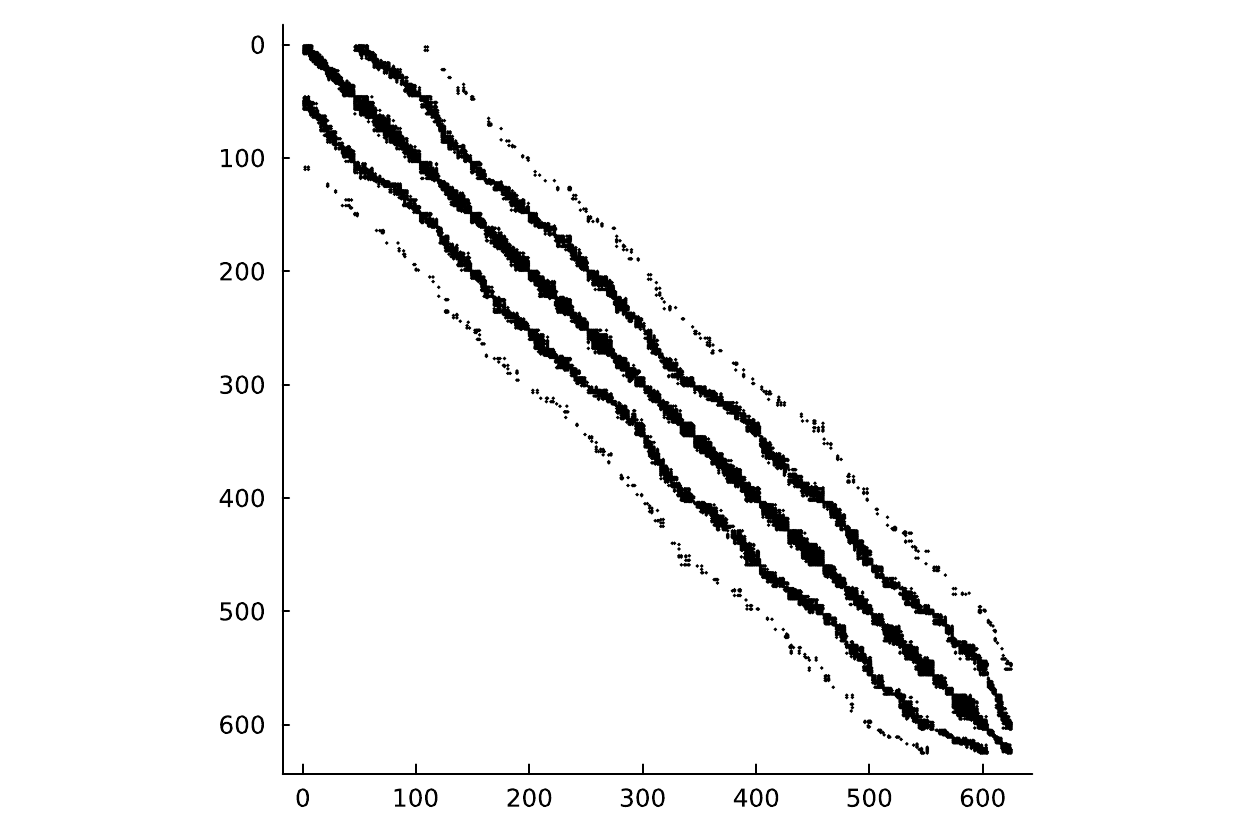}
			\subcaption{ISPH}
		\end{subfigure}
		\caption{Example of a sparsity pattern of the left-hand-side matrix in SILVA and ISPH on uniformly random distribution of 625 points (Voronoi seeds) in two dimensions. The smoothing length for ISPH is $1.5\delta r$ and the Wendland quintic kernel is used. For the sake of readability, the points are sorted by their occurrence in the cell list.}
		\label{fig:sparsity}
	\end{figure}
	In the definition \eqref{eq:matB}, we can recognize the approximation of the Laplacian operator as used in the Finite Volume Method on Voronoi meshes, which generalizes the standard central finite difference operator on Cartesian meshes \cite{eymard2000finite}. Since
	\begin{equation}
		\sum_{i} \sum_{j} p_i^{n+1} \, \B_{ij} \, p_j^{n+1} = \frac{1}{2\rho}\, \sum_{i} \sum_{j} \frac{|\Gamma_{ij}|^{n+1}}{r_{ij}^{n+1}}(p_{ij}^{n+1})^2,
	\end{equation} 
	we see that the matrix $\B$ is positive semi-definite and singular, with kernel generated by a constant vector $p_i^{n+1} \equiv 1$ (since $\Omega$ is connected). This implies that an additive constant on the pressure must be added to solve the Poisson problem with Neumann boundary conditions. Luckily, when we use a Krylov solver, like the Conjugate Gradient (CG) or the minimal residual (MINRES) method, this never becomes a real issue, since the matrix $\B$ is regular in the invariant subspace of vectors with zero average. The right hand side $\b$ defined by \eqref{eq:vecb} clearly belongs to this vector space, since
	\begin{equation}
		\sum_i \b_i = \frac{1}{\Delta t} \sum_i |\omega_i| \, \vv_i^{n} \cdot \sgrad{1}{i} = 0.
	\end{equation}

  Once the new pressure field $p_i^{n+1}$ is determined, the divergence-free velocity field $\vv_i^{n+1}$ is readily updated by means of \eqref{eqn.v_fd}.
  
  \begin{remark}
  	As an alternative to \eqref{eq:approxapprox} it could be possible to obtain a sparse matrix by adopting the staggered grid approach, along the lines of the semi-implicit finite volume schemes proposed in \cite{Casulli1990}. We leave this option for future investigations.
  \end{remark} 
	
	\subsection{Vortex core instability: stabilized pressure gradient}
	\label{sec:stabilization}
	In the SPH method, a well-known numerical artifact, called \textit{tensile instability}, appears in regions with negative pressure and leads to undesirable clustering of particles \cite{monaghan2000sph}. In a naive implementation of a Lagrangian Voronoi method, a similar problem arises. Even if every sample point occupies the same area in the sense of the Voronoi mesh, this does not forbid the seeds to approach each other arbitrarily close, leading to stability issues. This problem is particularly evident near pressure minima (such as vortex cores). To shed some light on this problem, let us suppose that close to a seed $\xx_i$, the pressure is given by
	\begin{equation}
		p = \lambda \, |\xx - \xx_i|^2,
		\label{eqn.p2}
	\end{equation}
	where $\lambda$ is a positive constant. Then $\nabla p (\xx_i) = 0$ and the particle should not be accelerated. Unfortunately, this is not what we observe because at the discrete level, the strong gradient operator \eqref{eq:sgrad} applied to the above pressure filed yields
	\begin{equation}
		\begin{split}
			\sgrad{p}{i} = \lambda \sgrad{|\bm{x} - \xx_i|^2}{i} = \frac{\lambda}{|\omega_i|} \sum_j |\Gamma_{ij}| \, r_{ij} \, (\bm{m}_{ij} - \xx_i) \neq 0. \label{eq:clever_zero}
		\end{split}
	\end{equation}
	Indeed, a Voronoi cell $\omega_i$ can be divided into sub-cells (triangles for $d=2$), each of which is a convex hull of $\{\xx_i\}\cup \Gamma_{ij}$ and has volume
	\begin{equation}
		V_{ij} = \frac{1}{d} |\Gamma_{ij}| r_{ij} 
		\label{eq:subvolume}
	\end{equation}
	and a centroid
	\begin{equation}
		\bm{c}_{ij} = \frac{d}{d+1}\bm{m}_{ij} + \frac{1}{d+1}\xx_{i}.
		\label{eq:centroid}
	\end{equation}
	Substitution of \eqref{eq:subvolume} and \eqref{eq:centroid} into \eqref{eq:clever_zero} leads to
	\begin{equation}
			\sgrad{p}{i} 
			=\frac{\lambda(d+1)}{|\omega_i|} \sum_j V_{ij} \left( \bm{c}_{ij} - \xx_i \right)
			= \lambda(d+1) \left( \bm{c}_{i} - \xx_i \right),
			\label{eqn.gradpL}
	\end{equation}
	where $\bm{c}_{i}$ is the centroid of $\omega_i$. Therefore, unless $\xx_i = \bm{c}_i$, the seed $\xx_i$ will be accelerated away from the centroid, hence distorting the cell. This means that we have to remove the potentially non-zero term given by $\lambda(d+1) \left( \bm{c}_{i} - \xx_i \right)$. To determine the positive coefficient $\lambda$, let us compute the Laplacian of the pressure field \eqref{eqn.p2}, which leads to
	\begin{equation}
		\lambda = \frac{\Delta p(\xx_i)}{2d}.
	\end{equation}
	The above expression is then inserted in \eqref{eqn.gradpL} to obtain the following definition of a \textit{stabilized pressure gradient}:
	\begin{equation}
		\sgrad{p}{i}^\mathrm{s} = \sgrad{p}{i} - \frac{d+1}{2d} [\langle \Delta p \rangle_i]^+\left( \bm{c}_{i} - \xx_i \right), \label{eq:grad_s1}
	\end{equation}
	where $\langle \Delta p \rangle_i$ is evaluated relying on the discrete Laplace operator \eqref{eqn.Laplace_p}, which will be introduced in the next section. Here, $[f]^+=\max(f,0)$ denotes the positive part of a function. We shall use \eqref{eq:grad_s1} instead of $\sgrad{p}{i}$ in \eqref{eqn.v_fd} for the update of the velocity field. Note that this stabilizer does not introduce any numerical parameter nor does it interfere with the exactness of the discrete gradient on linear functions.
	
	Other stabilization techniques for Lagrangian Voronoi Methods are suggested in the literature and can be used also in our setting. In \cite{springel2011hydrodynamic}, the clustering is prevented in the spirit of ALE method, by introducing a velocity of coordinates in the direction defined by the vector $\bm{c}_i - \xx_i$. In \cite{despres2024lagrangian}, a stabilizer is designed which adds a repulsion force nearby particles and is proven to be weakly consistent. We prefer to adopt the stabilizer \eqref{eq:grad_s1} because it is relatively simple to implement and it preserves the fully Lagrangian nature of SILVA.

	\subsection{Discrete viscous operator}
	It remains to discretize the viscous terms in the velocity equation \eqref{eqn.pde_v}. In a finite volume sense, the Laplacian of a twice differentiable vector field can be estimated as follows:
	\begin{eqnarray}
		\Delta \vv(\xx_i) &\approx& \frac{1}{|\omega_i|} \int_{\omega_i} \Delta \vv \, \dd \xx \nonumber \\
		&=& \frac{1}{|\omega_i|} \int_{\partial \omega_i} \nabla \vv \; \bm{n} \, \dd S \nonumber \\
		&\approx&-\frac{1}{|\omega_i|}\sum_j \frac{|\Gamma_{ij}|}{r_{ij}} \vv_{ij} =:\langle \Delta \vv \rangle_i. \label{eqn.Laplace_p}
	\end{eqnarray}
	Using the above definition and assuming a constant kinematic viscosity coefficient $\nu$, the viscous force in the incompressible Navier-Stokes model \eqref{eqn.pde} can be easily implemented within each cell $\omega_i$ as
	\begin{equation}
		\bm{f}_{\mathrm{visc}, i} = \nu \Delta \vv(\xx_i) \approx \nu \langle \Delta \vv \rangle_i = -\frac{\nu}{|\omega_i|}\sum_j \frac{|\Gamma_{ij}|}{r_{ij}} \vv_{ij},
		\label{eq:Viscosity}
	\end{equation}
	with $\vv_{ij}=\vv_i-\vv_j$. Naturally, if viscosity is included, we can no longer expect the conservation of energy $E$ in \eqref{eqn.Etot}, since $E$ represents only the mechanical component of energy and does not take into account entropy production. However, the velocity field is still divergence-free and the cell areas $|\omega_i|$ are still preserved. Regarding the time discretization, we include the viscous force right after the advection step \eqref{eqn.x_fd} in an explicit manner. In this way, we obtain an intermediate velocity field
	\begin{equation}
		\vv^{n*}_i = \vv^{n}_i + \Delta t \, \bm{f}_{\mathrm{visc}, i}(\xx^{n+1}_i, \vv_i^{n}),
	\end{equation}
	which is then projected to a solenoidal space by the implicit pressure step \eqref{eqn.P_system} with $\vv^{n*}$ in place of $\vv^n$.
	
	\begin{remark}
		To prescribe a Dirichlet condition for the velocity vector of the type
		\begin{equation}
			\vv = \vv_D, \quad \xx \in \Gamma_D,
			\label{eq:Dirichlet}
		\end{equation}
		on a linear boundary $\Gamma_D$, we need to take into account the viscous force by which walls act onto Voronoi cells. The force can be introduced by means of a fictitious mirror polygon $\omega_j'$, which is the reflection of $\omega_i$ with respect to $\Gamma_D$ and which has velocity 
		\begin{equation}
			\vv_j' = 2\vv_D - \vv_i.
		\end{equation}
		Including these reflected cells, Equation \eqref{eq:Viscosity} can be used without modification by setting $ \vv_{ij}=\vv_i - \vv_j'$. 
		Following the same reasoning, no-slip boundary conditions are obtained by setting $\vv_j' =- \vv_i$, which means $\vv_D=\mathbf{0}$.
	\end{remark}
	
	\subsection{Summary of the SILVA method}
	Finally, let us briefly summarize what one time step of SILVA looks like.
	\begin{enumerate}
		\item Update of the seed positions $\xx_i^{n+1}$ with \eqref{eqn.x_fd}, hence by solving explicitly the trajectory equation \eqref{eqn.pde_x}.
		\item Re-generation of the Voronoi mesh according to the procedure detailed in Section \ref{sec.VoronoiMesh}. Update of all geometry related quantities (facet lengths, midpoints and inter-seed distances).
		\item Explicit computation of the viscous forces using \eqref{eq:Viscosity}.
		\item Computation of the new pressure $p_i^{n+1}$ by implicitly solving the discrete pressure Poisson equation \eqref{eq:discrete_poi} with the matrix $\B$ given by \eqref{eq:matB} and the right-hand-side vector $\b$ given by \eqref{eq:vecb} using the MINRES method. 
		\item Update of the divergence-free velocity field $\vv_i^{n+1}$ by means of \eqref{eqn.v_fd} with the stabilized pressure gradient \eqref{eq:grad_s1}. 
	\end{enumerate}
	
	\section{Numerical results} \label{sec.test}
	
	\subsection{Taylor-Green Vortex} \label{sec:tagr}
	We consider the Taylor-Green vortex benchmark on the computational domain $\Omega = [-0.5, 0.5]^2$. This test case has the analytical solution in the form of an exponentially decaying vortex with the velocity field given by
	\begin{equation}
		\vv(t, x,y) = \begin{pmatrix}
			\quad \cos \pi x \; \sin \pi y\\
			-\sin \pi x \; \cos \pi y,
		\end{pmatrix} \exp\left(-\frac{2t\pi^2}{\mathrm{Re}}\right),
	\end{equation}
	while the pressure field is
	\begin{equation}
		p(t, x,y) = \frac{\sin^2 \pi x + \sin^2 \pi x - 1}{2}\exp\left(-\frac{2t \pi^2}{\mathrm{Re}}\right) .
	\end{equation}
	The total kinetic energy is
	\begin{equation}
		E = \frac{1}{2} \exp\left(-\frac{2t \pi^2}{\mathrm{Re}}\right).
	\end{equation}
  $\mathrm{Re} = U_0 \, L_0/\nu$ is the Reynolds number and it represents the ratio between inertial and viscous forces in the studied phenomenon. $U_0=1$ and $L_0=1$ are the reference velocity and domain size, respectively.
	A free-slip boundary and a compatible initial condition is prescribed. In accordance with \cite{carlino2024arbitrary}, the 
	simulation is terminated at time $t_f=0.2$, and we measure the energy conservation error as well as the $L^2$ error of 
	pressure, velocity and the divergence of velocity, evaluated using the discrete operator \eqref{eq:wgrad}. The mesh convergence is tested on two types of Voronoi meshes: one is initiated 
	as a structured Cartesian mesh, while the other one as an unstructured mesh, see Figure \ref{fig:tagr_final}. 
	Three different values of Reynolds number are considered, namely $\mathrm{Re}=\{ 400, 1000, \infty\}$, and the obtained convergence rates are depicted in Figure \ref{fig:tagr_convergence}. One can observe 
	that the convergence of the velocity error is super-linear. The divergence of the velocity field is not 
	conserved up to the machine precision, due to the adoption of a sparser matrix and algebraic errors, but has a very fast convergence rate. 
	\begin{figure}[htb!]
		\centering
		\begin{subfigure}{0.5\linewidth}
			\includegraphics[width = \linewidth]{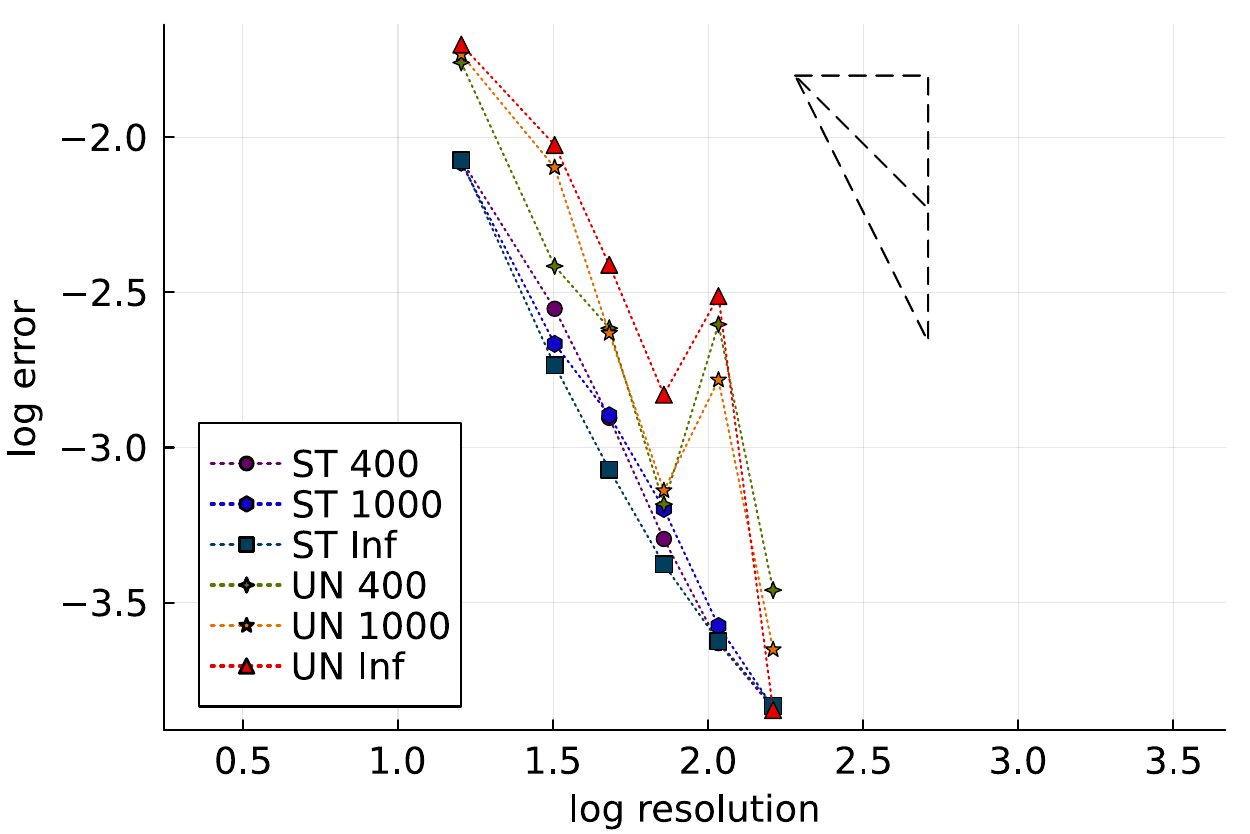}
			\subcaption{velocity error}
		\end{subfigure}%
		\begin{subfigure}{0.5\linewidth}
			\includegraphics[width = \linewidth]{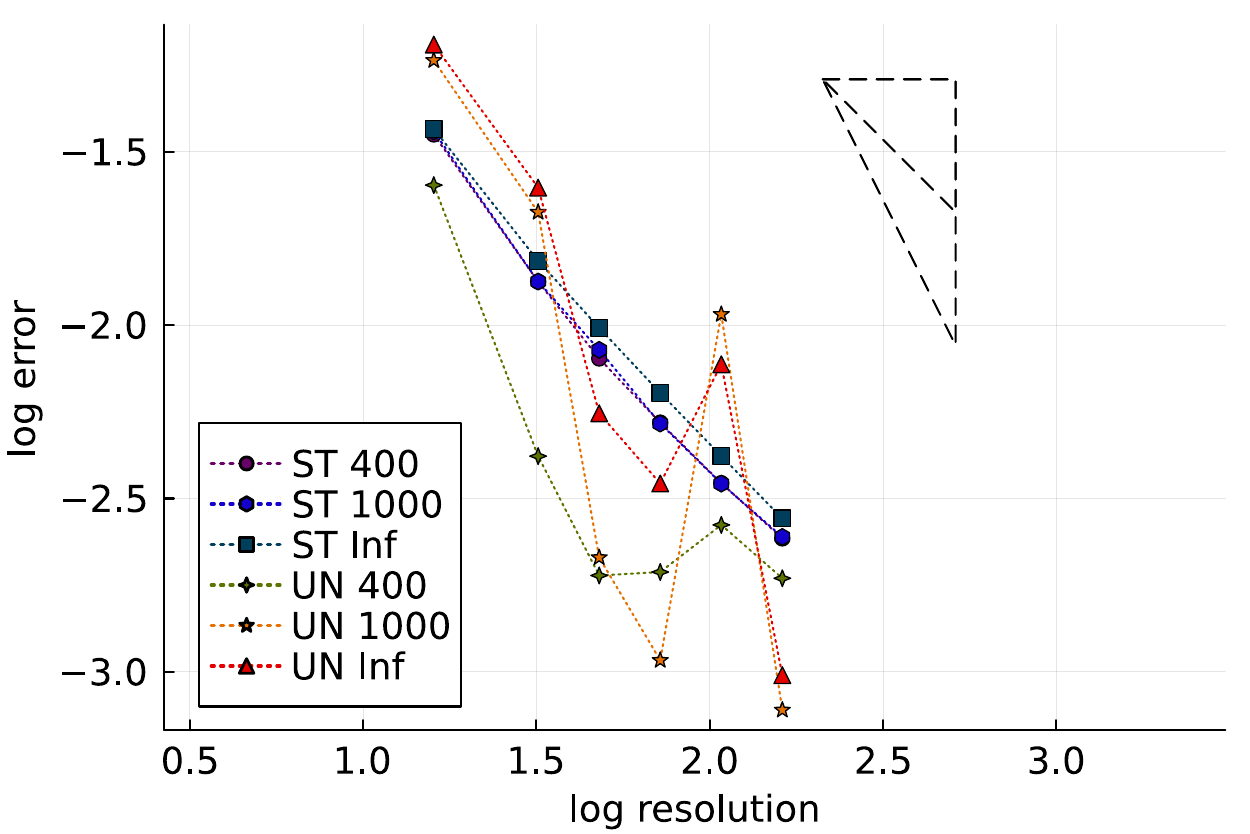}
			\subcaption{pressure error}
		\end{subfigure}
		\begin{subfigure}{0.5\linewidth}
			\includegraphics[width = \linewidth]{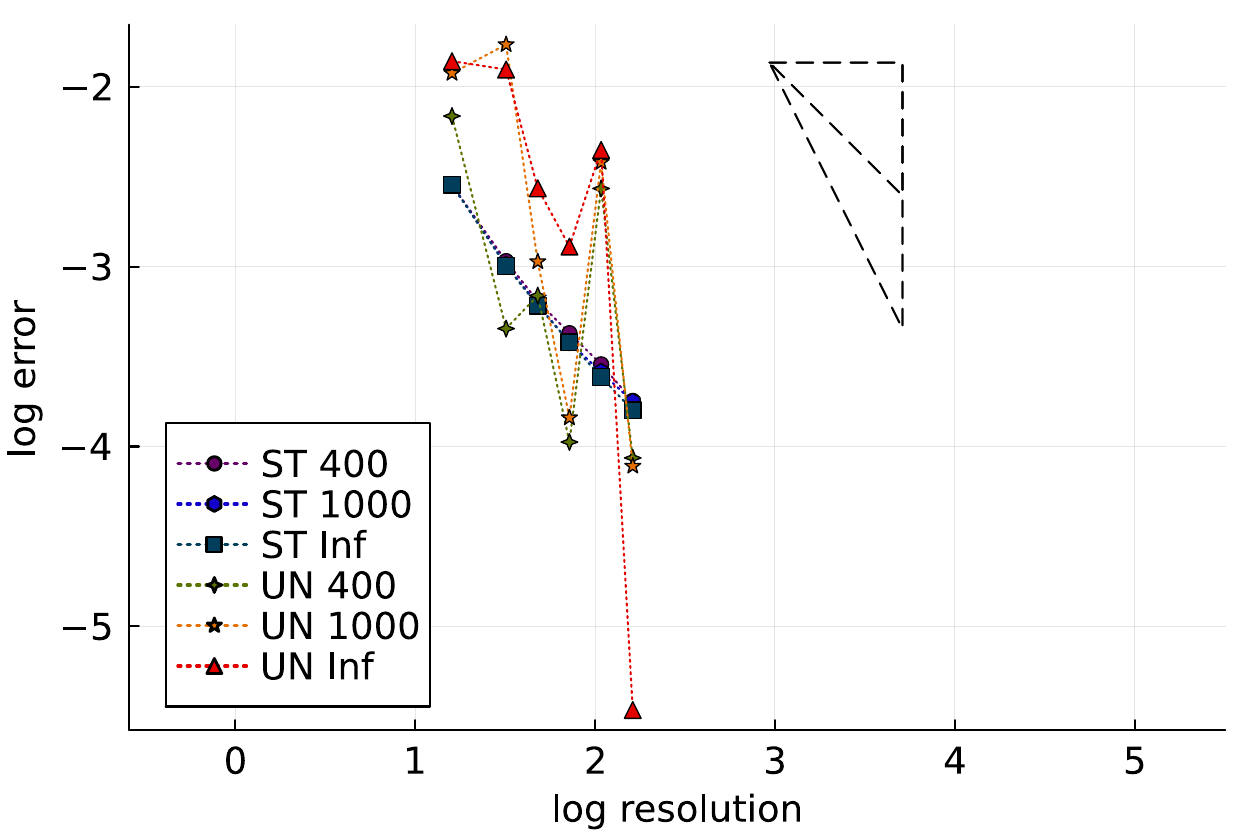}
			\subcaption{energy error}
		\end{subfigure}%
		\begin{subfigure}{0.5\linewidth}
			\includegraphics[width = \linewidth]{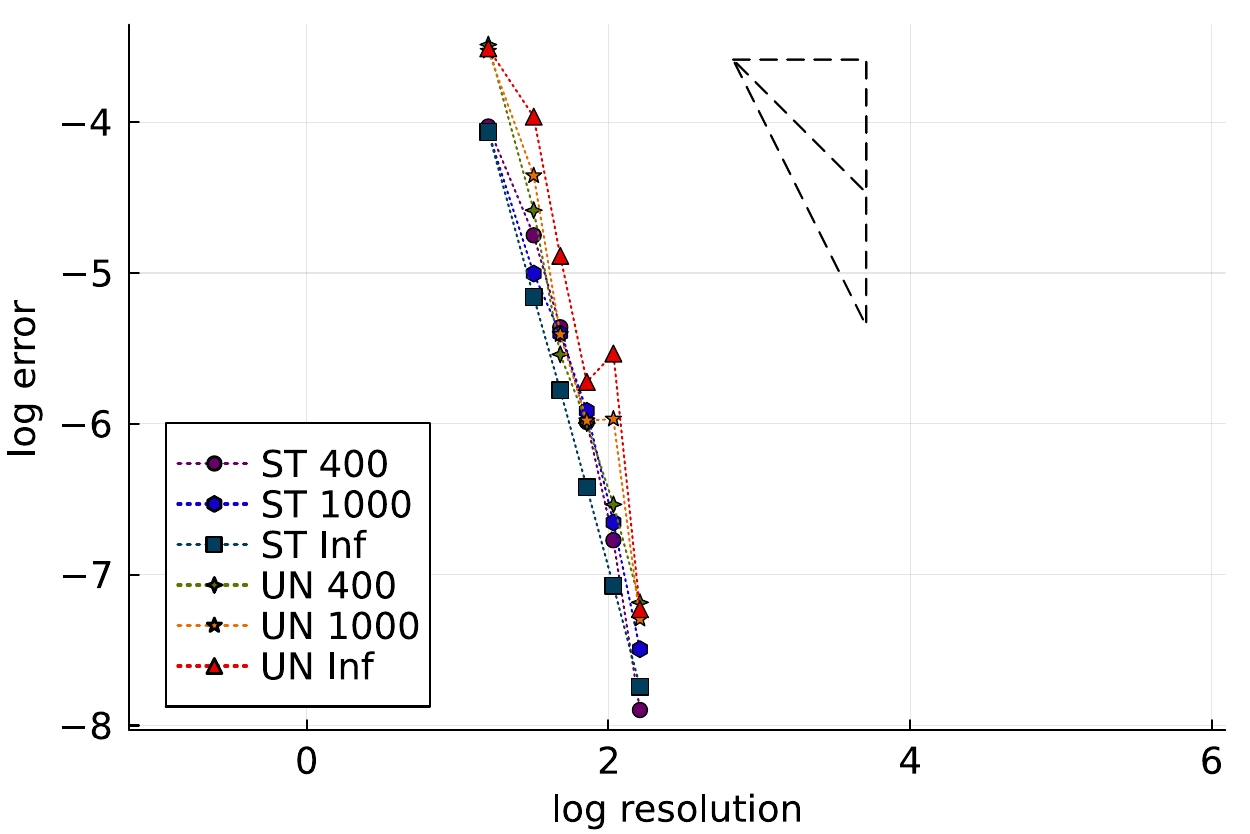}
			\subcaption{divergence error}
		\end{subfigure}
		\caption{Errors of velocity, pressure, energy and velocity divergence in the Taylor-Green vortex benchmark. The horizontal axis corresponds to $\log_{10} N$, where $N$ is the number of cells per side of the computational domain. The vertical axis indicates the logarithm of error. We also compare the result for a structured (ST) rectangular and an unstructured (UN) Vogel grid for $\mathrm{Re}=\{ 400, 1000, \infty\}$. The dashed triangles indicate what the reference linear and quadratic convergence slopes.}
		\label{fig:tagr_convergence}
	\end{figure}

  \begin{figure}[htb!]
  	\centering
  	\begin{subfigure}{0.5\linewidth}
  		\centering
  		\includegraphics[width = 0.7\linewidth]{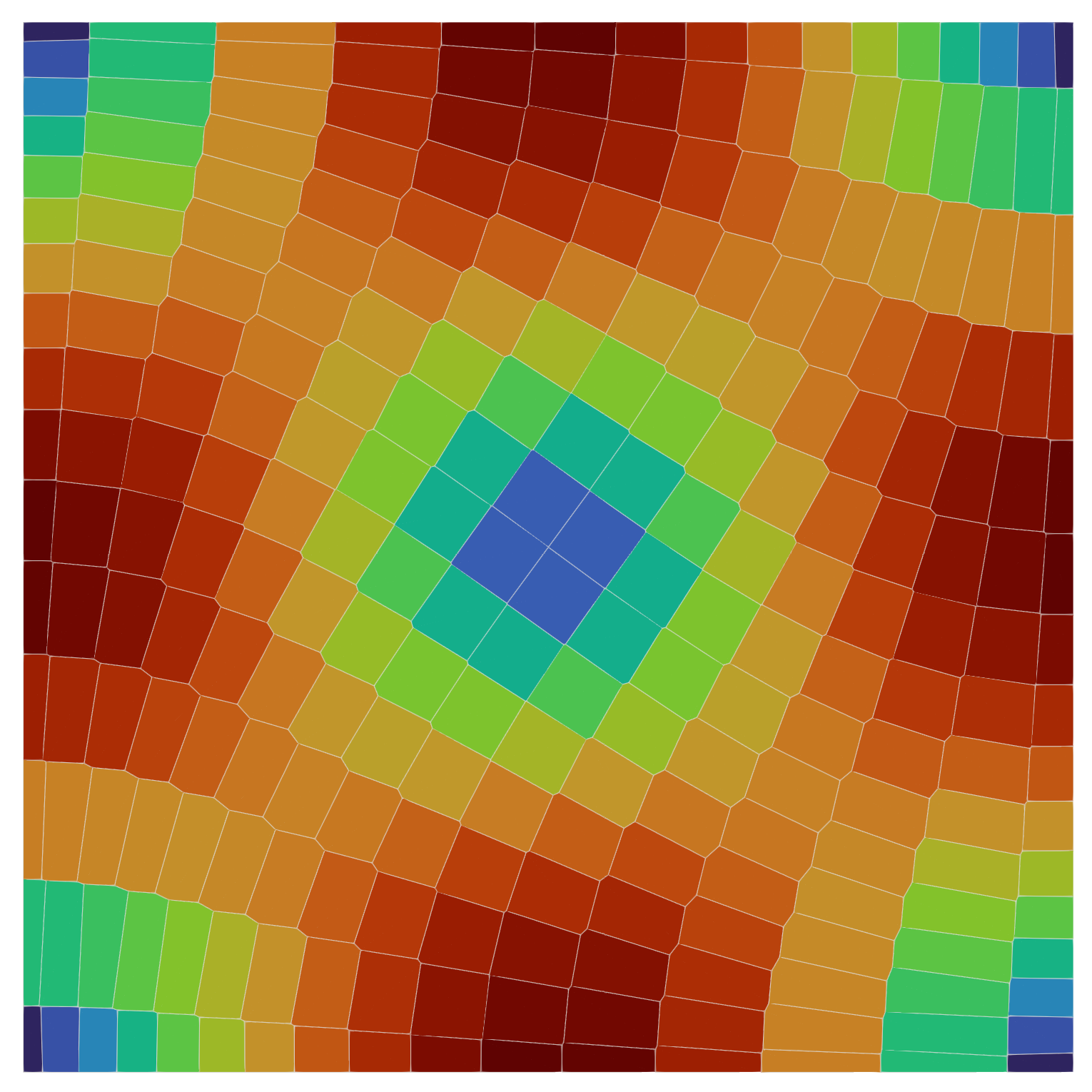}
  		\subcaption{Rectangular grid}
  	\end{subfigure}%
  	\begin{subfigure}{0.5\linewidth}
  		\centering
  		\includegraphics[width = 0.7\linewidth]{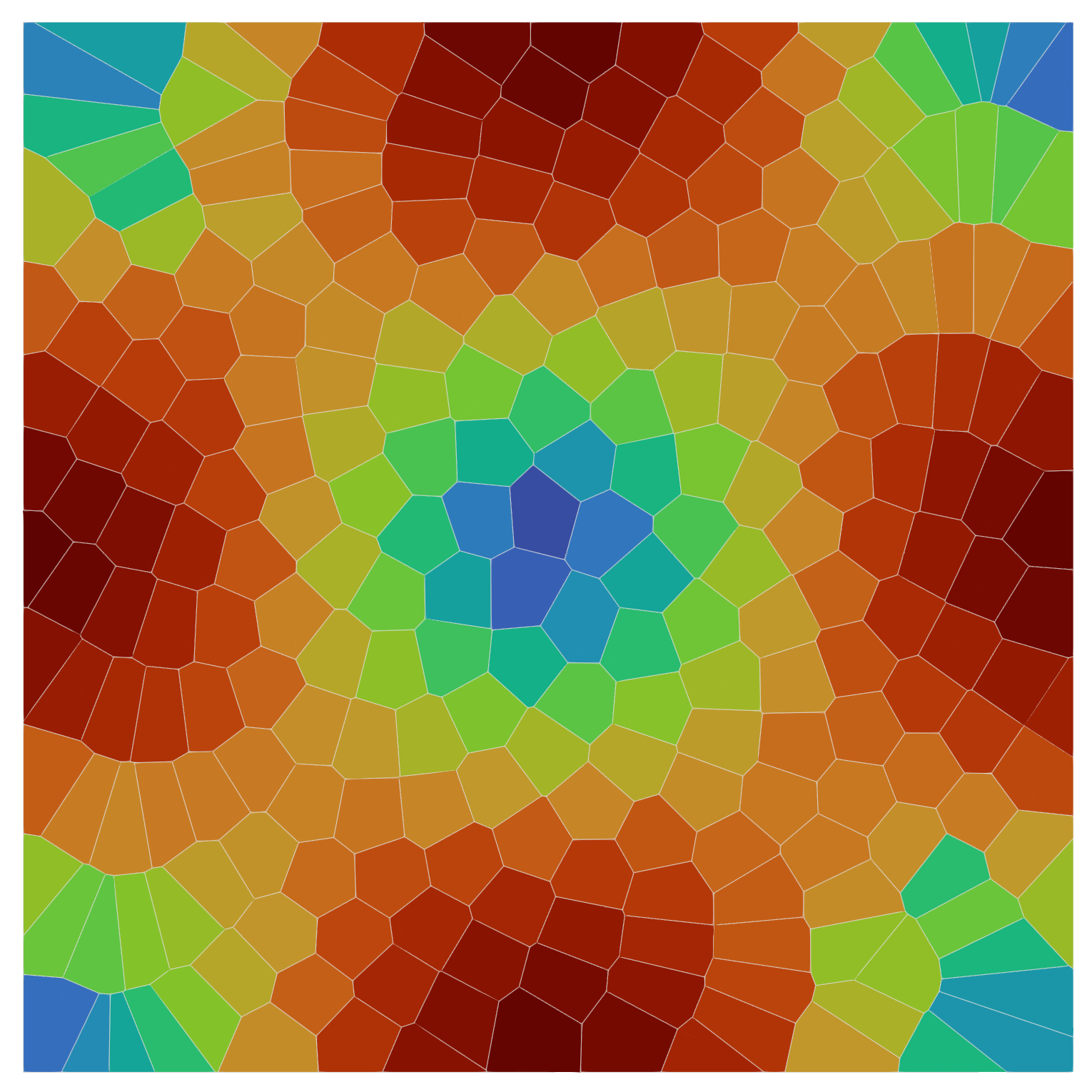}
  		\subcaption{Unstructured grid}
  	\end{subfigure}
  	\includegraphics[width=0.4\linewidth]{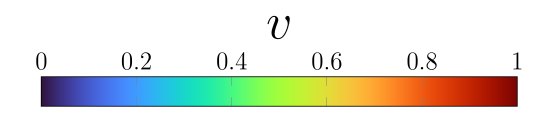}
  	\caption{The final time step of Taylor-Green Vortex for $N = 16$, $\mathrm{Re} = \infty$.}
  	\label{fig:tagr_final}
  \end{figure}

	\subsection{Gresho Vortex}
	The Gresho vortex problem is a standard test case for inviscid fluids. The initial conditions impose a vortex with a triangular velocity profile. With no viscosity, the flow field is stable under evolution, but this behavior is difficult to recover numerically. For the initial setup of this test, we refer to \cite{liska2003comparison}. The computational domain is initially discretized with three different resolutions, namely with $N=50^2$, $N=100^2$ and $N=200^2$ Voronoi seeds. The results of the simulation are depicted in Figure \ref{fig:Gresho_profile}, and they indicate that SILVA preserves the triangular profile with reasonable accuracy, while exhibiting some diffusive behavior. Furthermore, mesh convergence with the respect to the loss of energy is shown. The color map of the velocity field is shown in Figure \ref{fig:Gresho_pics}. Note that the final time $t_f=3$ corresponds to more than three full revolutions of the vortex, which would be clearly impossible to simulate on a Lagrangian mesh with fixed topology. 
	Without using the stabilizer introduced in Section \ref{sec:stabilization}, we observe the well-known vortex core instability, as illustrated in Figure \ref{fig:Gresho_instability}.

	\begin{figure}[ht!]
		\centering
		\includegraphics[width = 0.5\linewidth]
		{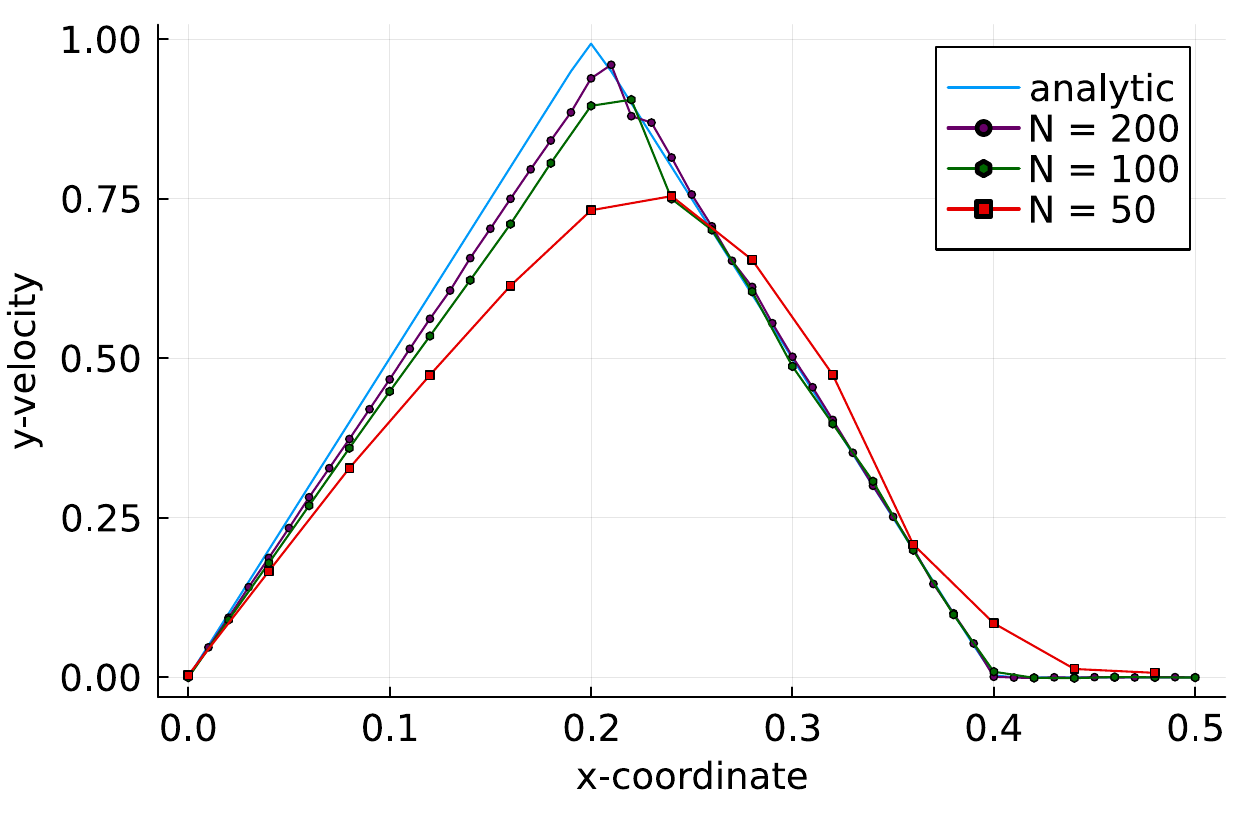}%
		\includegraphics[width = 0.5\linewidth]{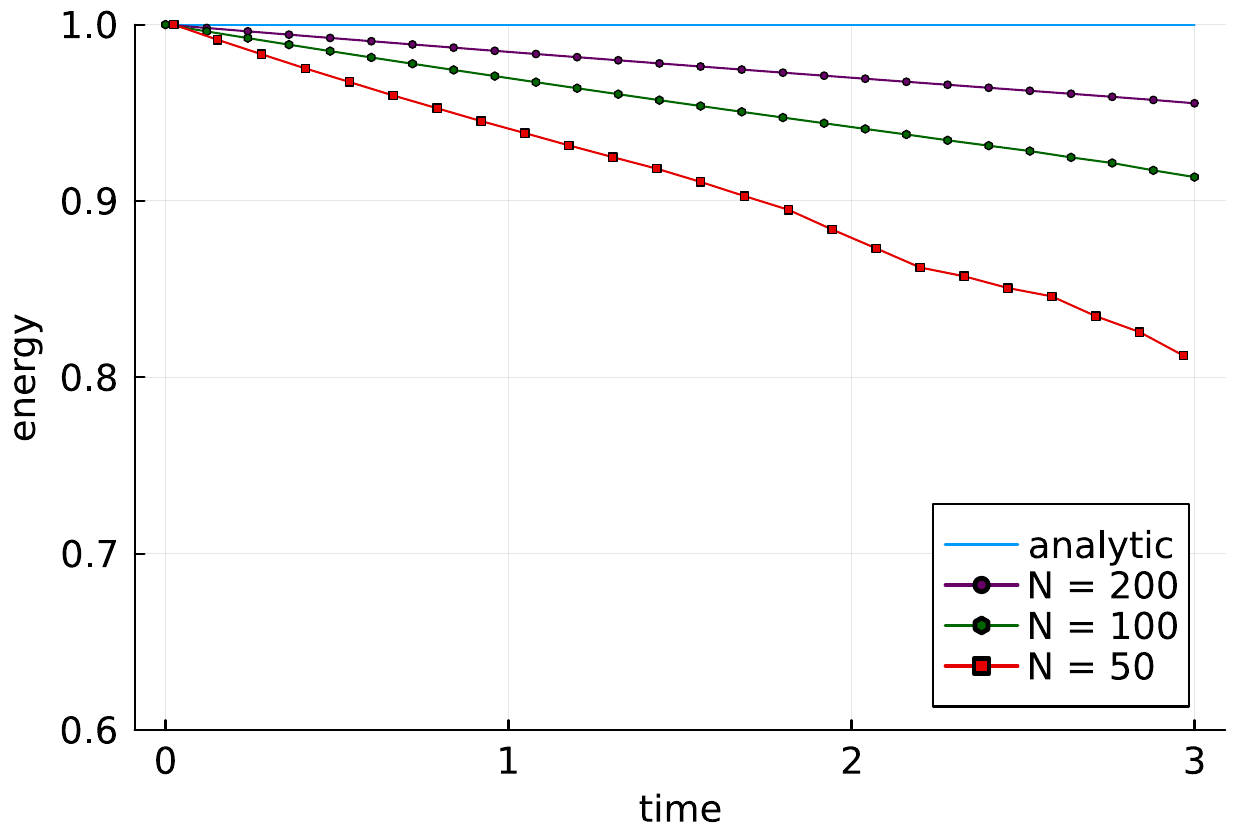}
		\caption{Left: comparison of the tangential velocity along the $x$-axis with the exact solution in the Gresho vortex benchmark. The result is at $t = 3$ and compares three different resolutions and an analytical solution. Right: time evolution of energy (relative to the initial state). The graph can be compared to the high-order semi-implicit method presented in \cite{boscheri2021high}.}
		\label{fig:Gresho_profile}
	\end{figure}
	
	\begin{figure}[ht!]
		\centering
		\includegraphics[width = \linewidth]{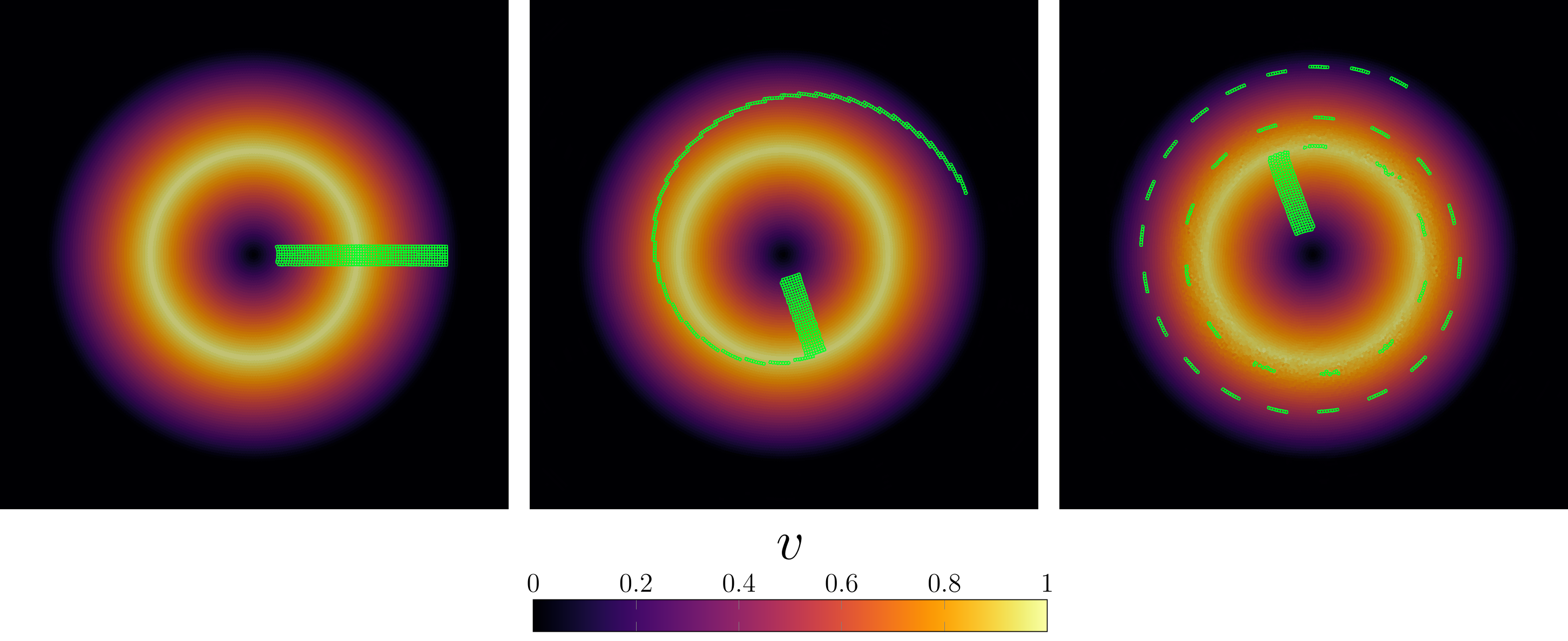}
		\caption{Gresho vortex at $t = \{0,1,3\}$. The color map indicates magnitude of velocity. A group of Voronoi cells is highlighted in green. In the last frame, the vortex structure is starting to deteriorate. The resolution in this figure is $200\times 200$.}
		\label{fig:Gresho_pics}
	\end{figure}

	\begin{figure}[ht!]
		\centering
		\begin{subfigure}{0.33\linewidth}
			\centering\includegraphics[width=0.9\linewidth]{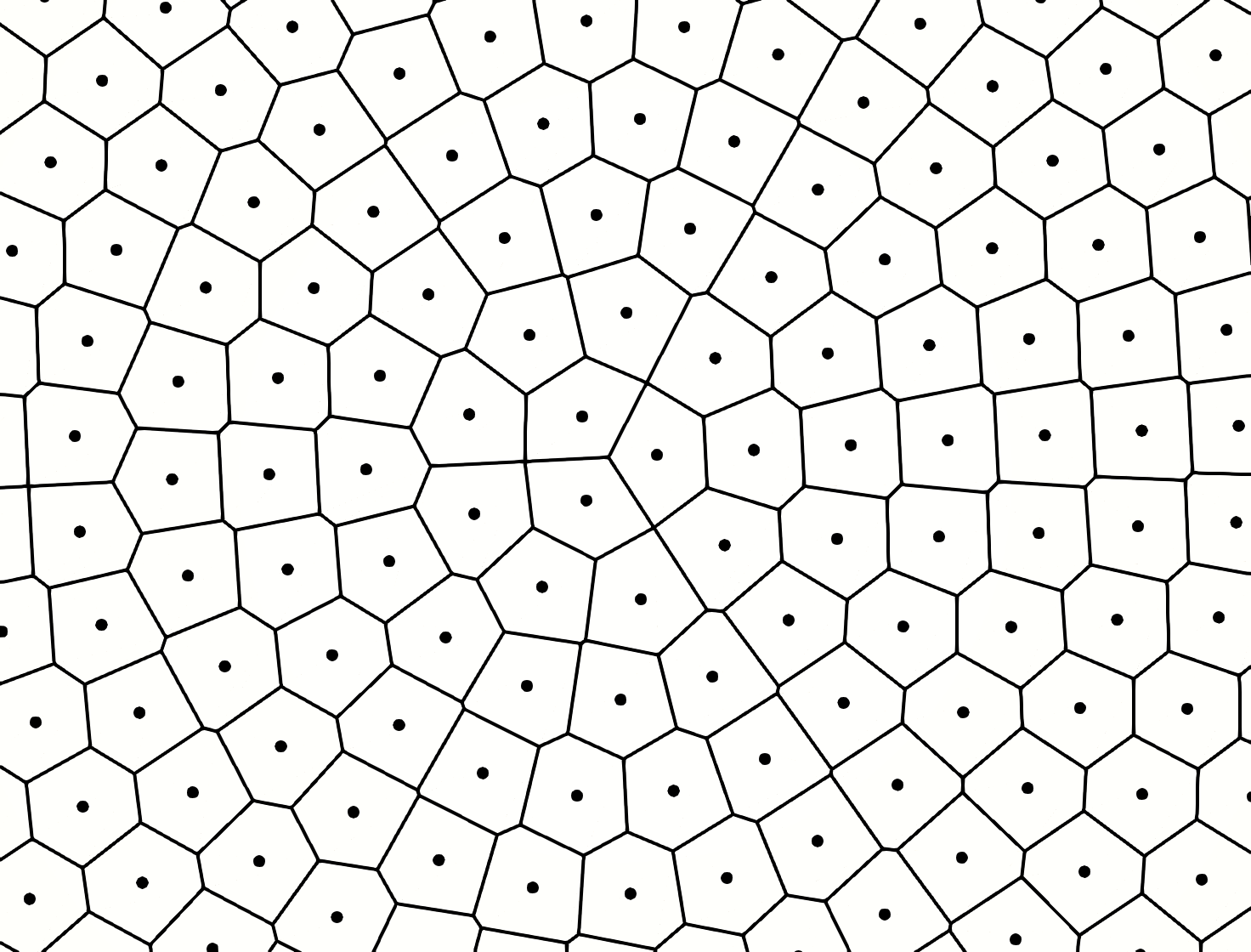}
			\caption{Initial state.}
		\end{subfigure}%
		\begin{subfigure}{0.33\linewidth}
			\centering\includegraphics[width=0.9\linewidth]{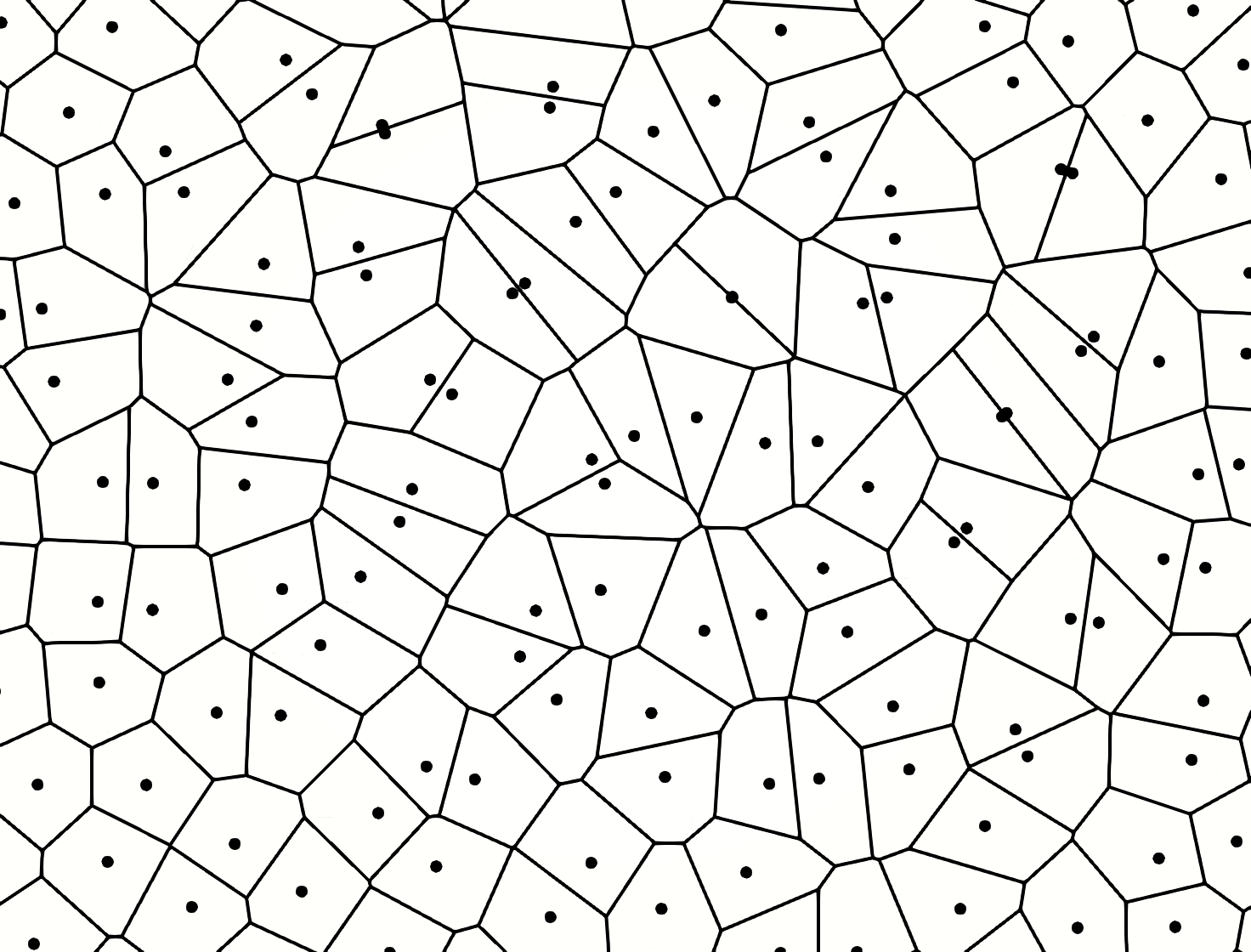}
			\caption{Unstable ($t=0.8$).}
		\end{subfigure}%
		\begin{subfigure}{0.33\linewidth}
			\centering\includegraphics[width=0.9\linewidth]{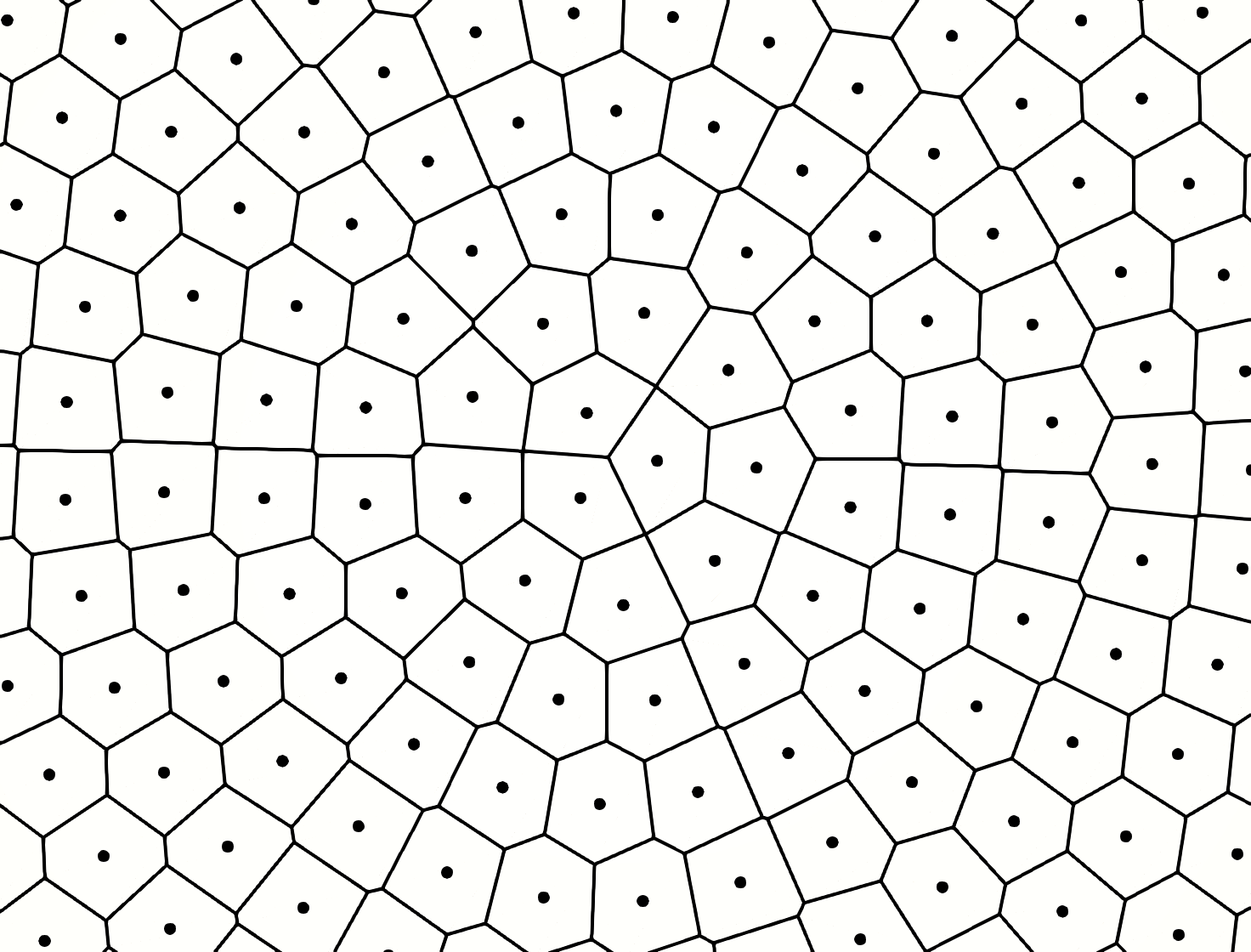}
			\caption{Stabilized ($t=0.8$).}
		\end{subfigure}
		\caption{Zoom at the vortex core at $t = 0$ and $t=0.8$ with and without stabilization. The Voronoi mesh becomes distorted as seeds are propagated away from their respective centroids.}
		\label{fig:Gresho_instability}
	\end{figure}
	
	\subsection{Lid-driven cavity}
	
	To test the correct simulation of the viscous forces, we consider the lid-driven cavity problem for an incompressible fluid. The computational domain is given by $\Omega = [-0.5, 0.5]^2$, and the fluid is initially at rest, i.e., $p = 0$ and $\vv = \mathbf{0}$. No-slip wall boundary conditions are defined on the vertical sides ($x=\pm0.5$) and at the bottom ($y=-0.5$) of the domain, while on the top side ($y=0.5$) the velocity field $\vv = (1, 0)$ is imposed. The simulations are performed as time-dependent problems on a Voronoi grid of $10000$ sample points, and four different Reynolds numbers are consider, namely $\mathrm{Re}=\{100,400,1000,3200\}$. An excellent agreement with the notorious referential solution by Ghia et al. \cite{ghia1982high} is achieved and plot in Figure \ref{fig:ldc}. More precisely, the agreement for Reynolds numbers $\mathrm{Re} = 100, 400, 1000$ is almost perfect. At $\mathrm{Re} = 3200$, we observe some discrepancy, presumably because of the numerical diffusion and insufficient resolution near boundary layers. However, SILVA is sufficiently precise to indicate subtle features of the flow, such as the left and right bottom corner vortices as well as the emergence of top left corner vortex for $\mathrm{Re} = 3200$, see Figure \ref{fig:corner}. To emphasize the Lagrangian character of SILVA, Figure \ref{fig:re1000cells} shows the trajectories of some Voronoi cells.
	
	\begin{figure}[ht!]
		\centering

		\begin{subfigure}{0.5\linewidth}
			\includegraphics[width=\linewidth]{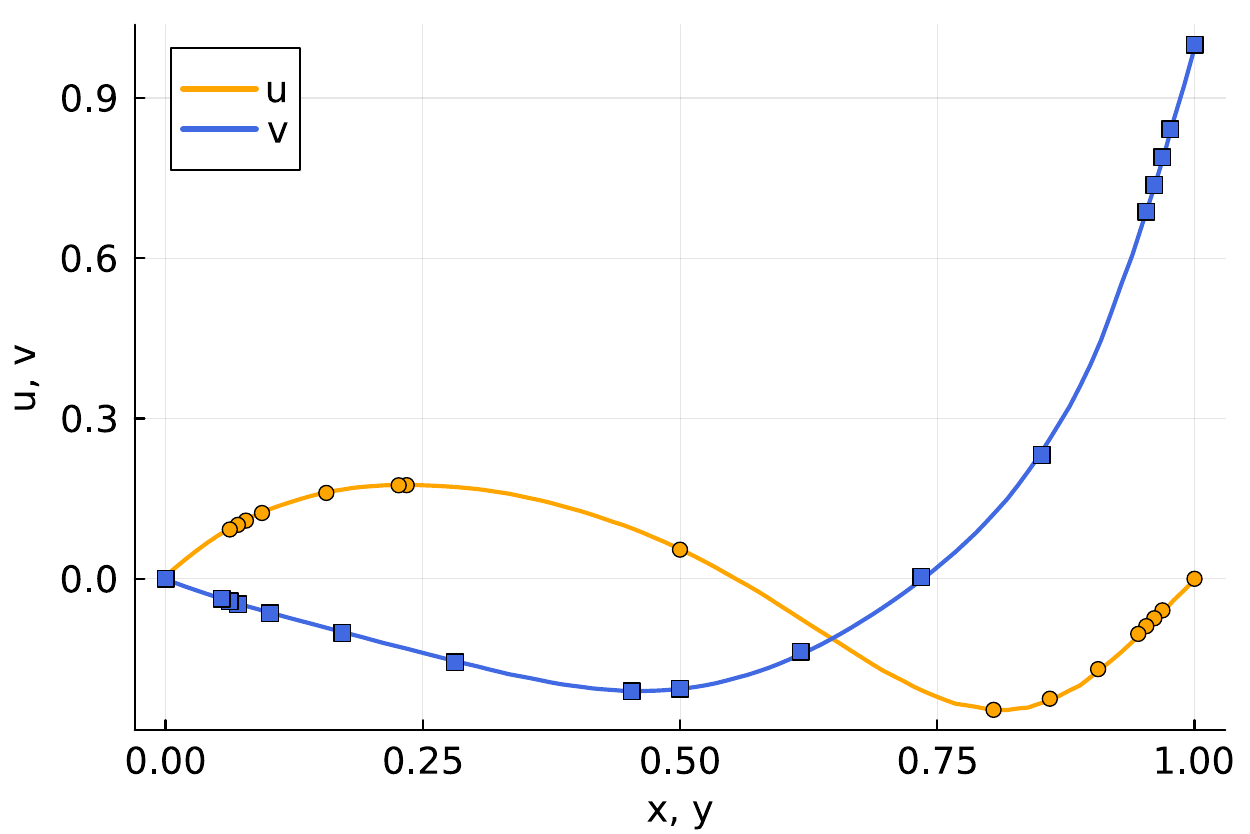} 
			\subcaption{$\mathrm{Re} = 100,\; t = 10$}
		\end{subfigure}%
		\begin{subfigure}{0.5\linewidth}
			\includegraphics[width=\linewidth]{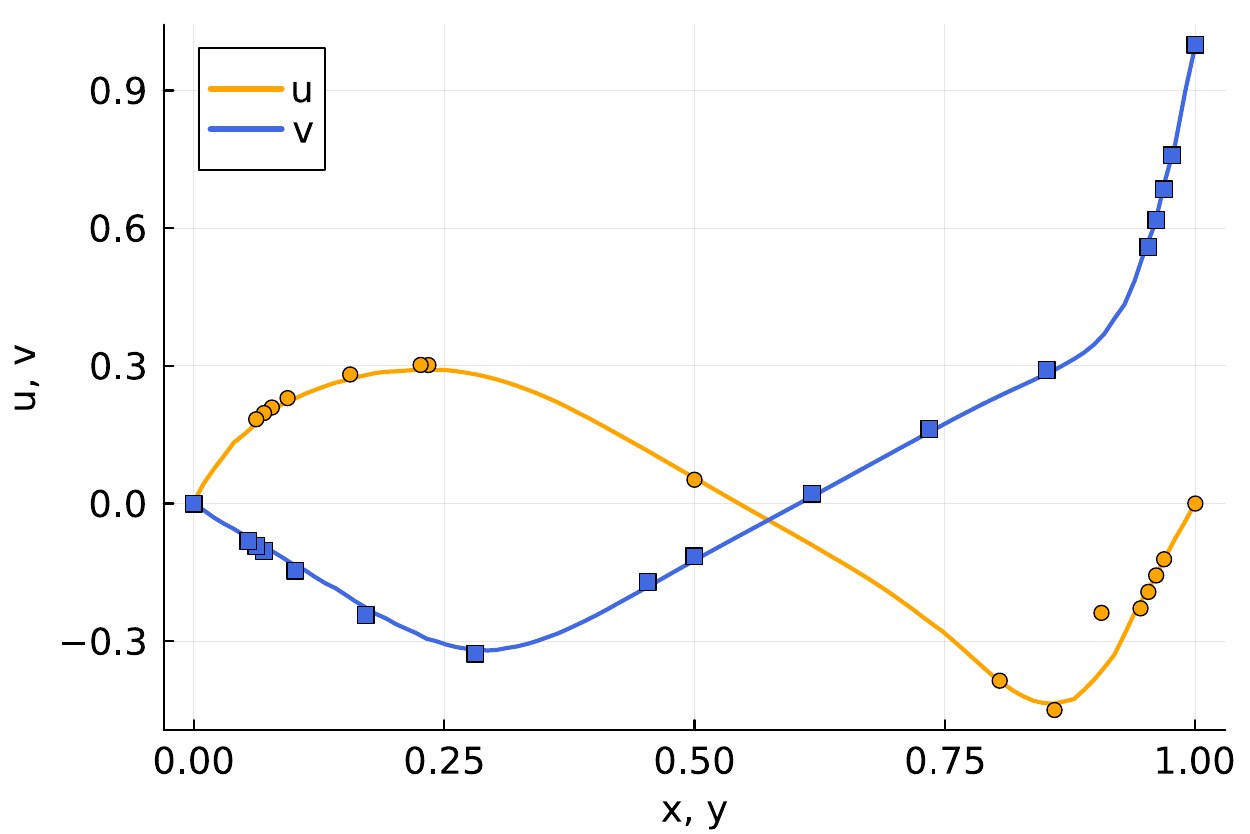} 
			\subcaption{$\mathrm{Re} = 400,\; t = 40$}
		\end{subfigure}
		\begin{subfigure}{0.5\linewidth}
			\includegraphics[width=\linewidth]{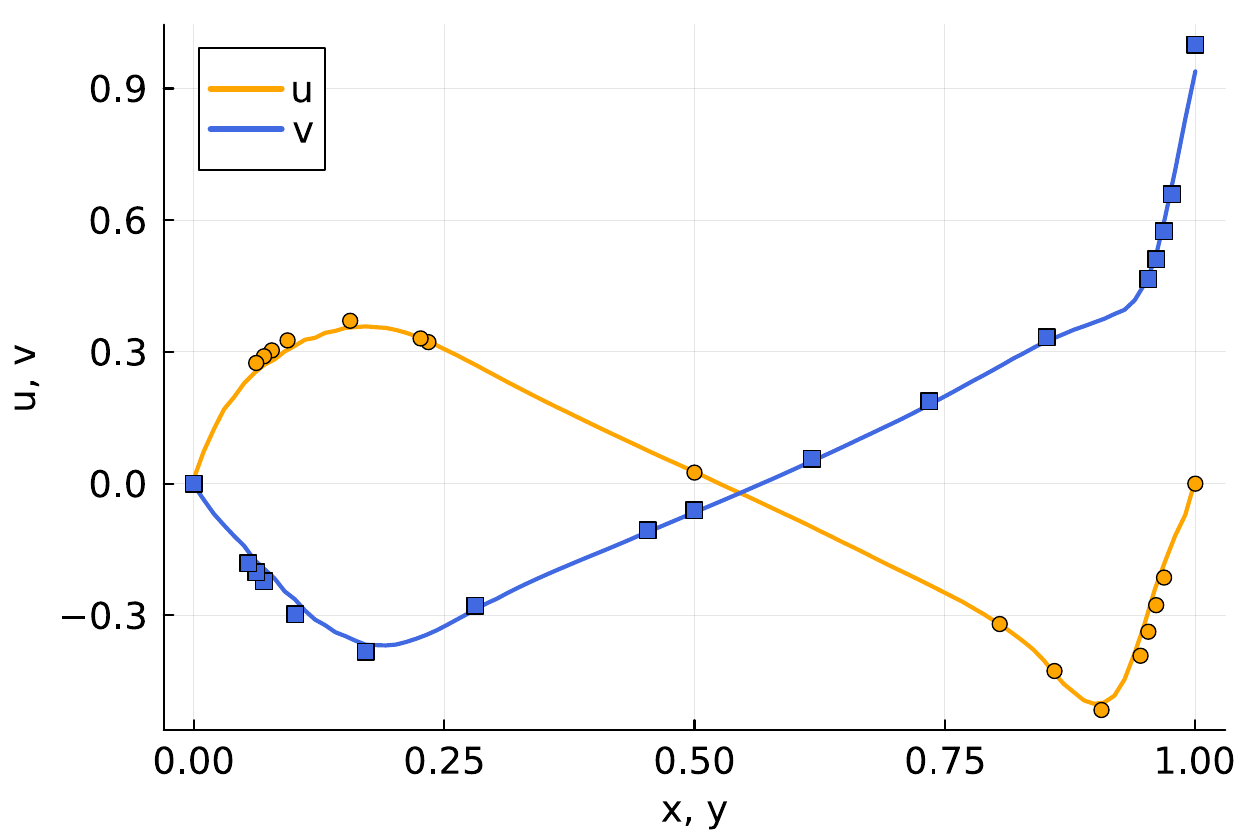} 
			\subcaption{$\mathrm{Re} = 1000,\; t = 100$}
		\end{subfigure}%
		\begin{subfigure}{0.5\linewidth}
			\includegraphics[width=\linewidth]{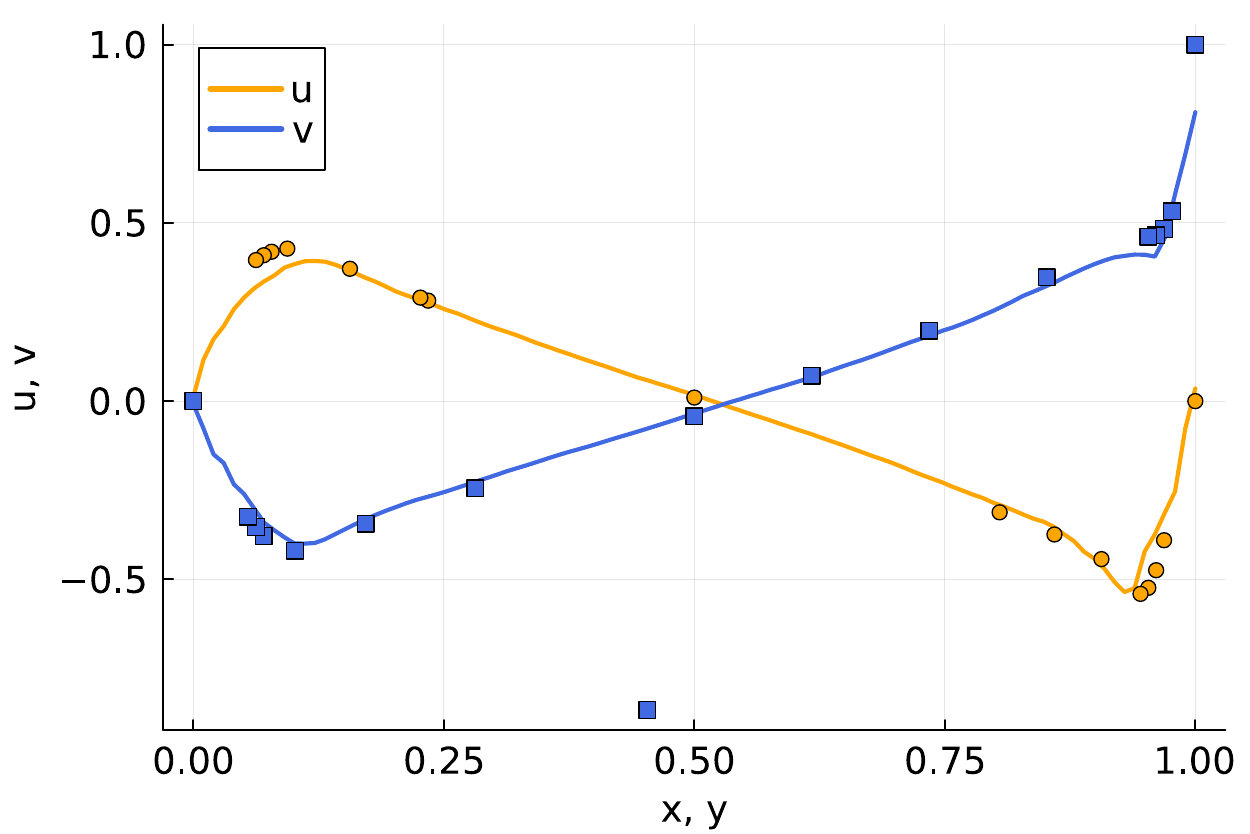} 
			\subcaption{$\mathrm{Re} = 3200,\; t = 320$}
		\end{subfigure}
		\caption{Horizontal velocity $u$ along the vertical center-line (orange) and vertical velocity $v$ along the horizontal center-line (blue) in the lid-driven cavity benchmark computed using SILVA for various Reynolds numbers. We compare the result with the referential solution of Ghia et al. \cite{ghia1982high}.}
		\label{fig:ldc}
	\end{figure}
	
	\begin{figure}[ht!]
		\centering
		\begin{subfigure}{0.5\linewidth}
			\includegraphics[trim = {4cm 0cm 4cm 0cm}, width=\linewidth]{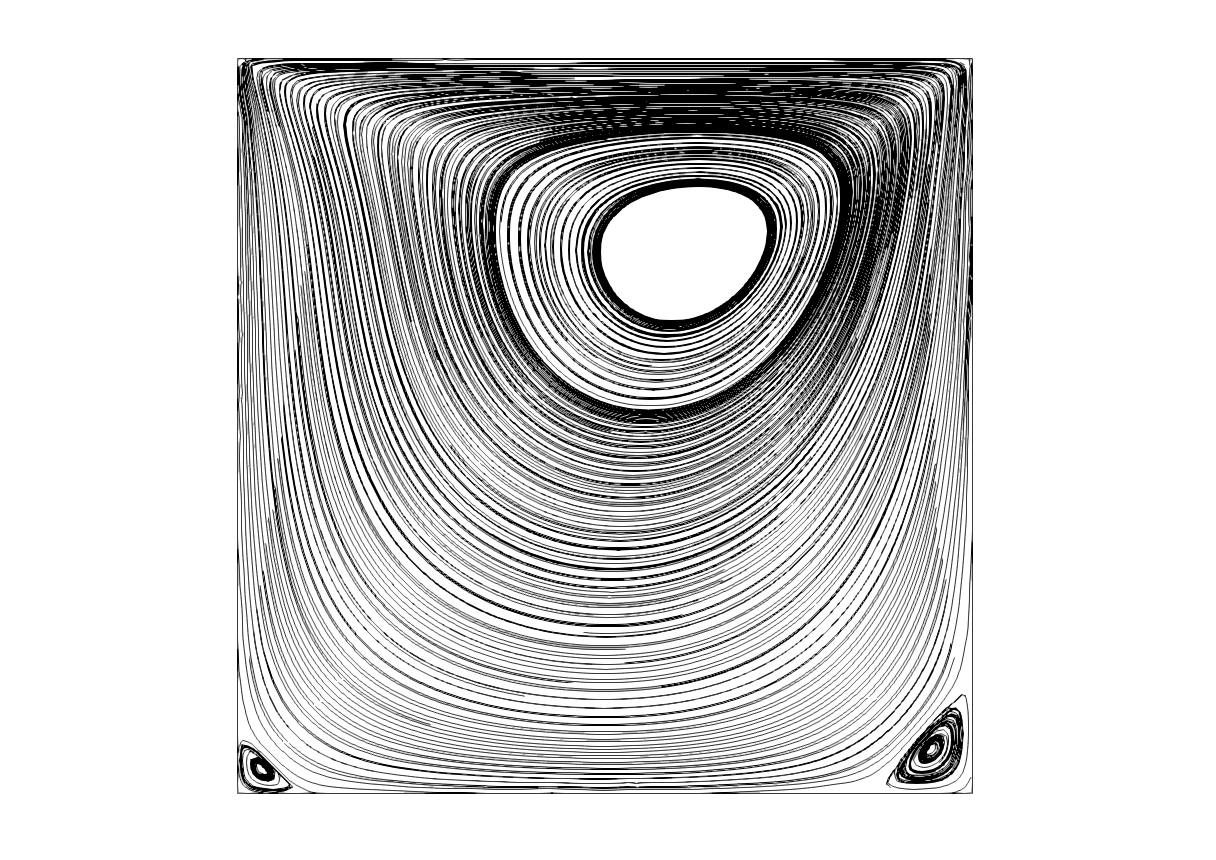} 
			\subcaption{$\mathrm{Re} = 100,\; t = 10$}
		\end{subfigure}%
		\begin{subfigure}{0.5\linewidth}
			\includegraphics[trim = {4cm 0cm 4cm 0cm}, width=\linewidth]{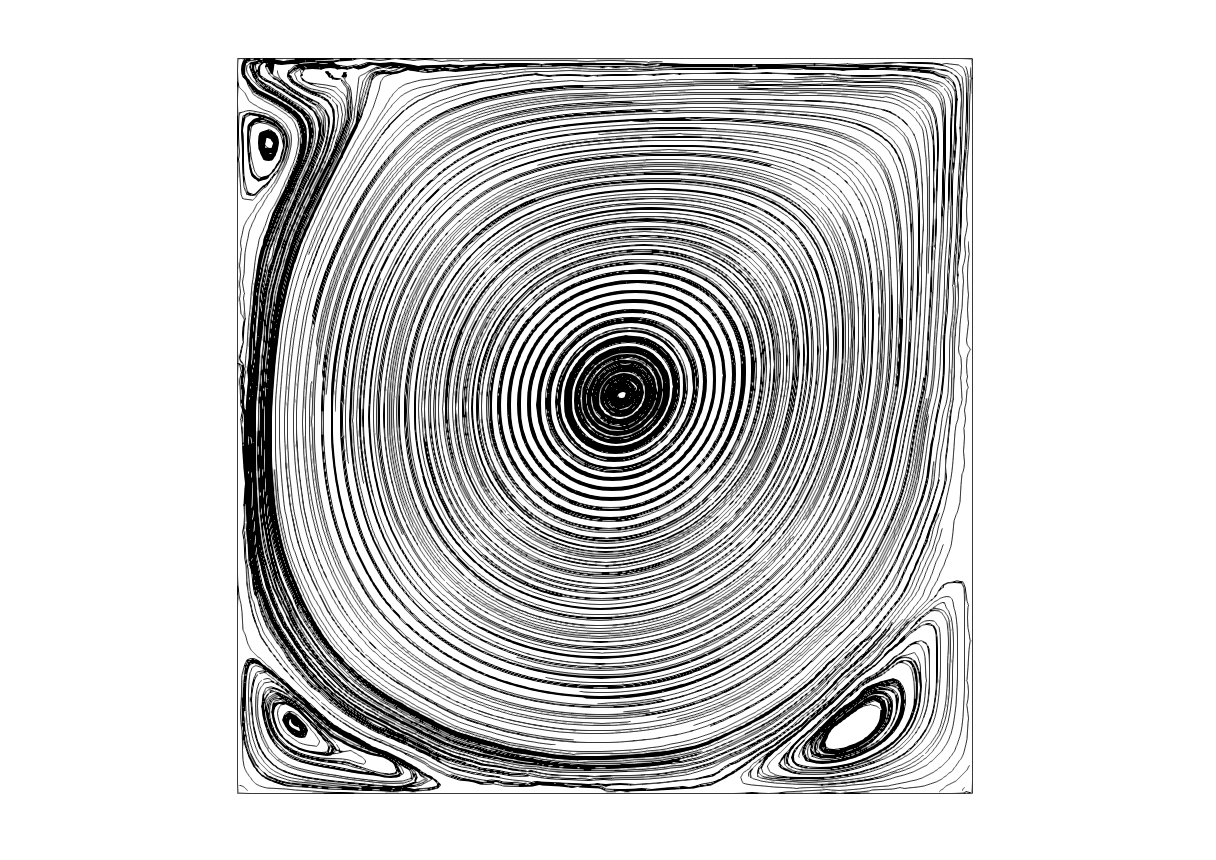} 
			\subcaption{$\mathrm{Re} = 3200,\; t = 320$}
		\end{subfigure}
		
		\caption{Streamline plots in the lid-driven cavity benchmark.}
		\label{fig:corner}
	\end{figure}

	\begin{figure}[ht!]
		\centering
		\begin{subfigure}{0.3\linewidth}
			\includegraphics[ width=\linewidth]{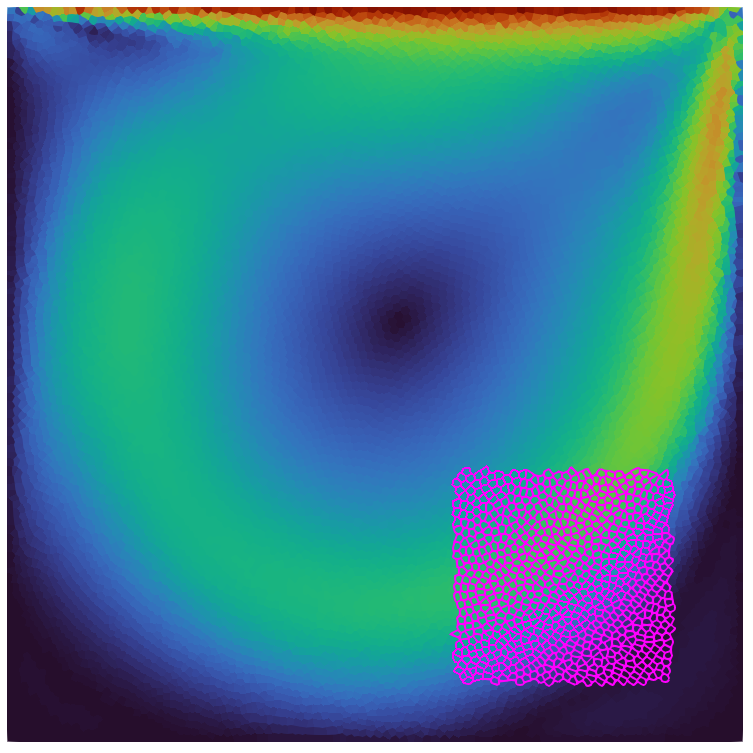} 
			\subcaption{$t = 90$}
		\end{subfigure}%
		\begin{subfigure}{0.3\linewidth}
			\includegraphics[ width=\linewidth]{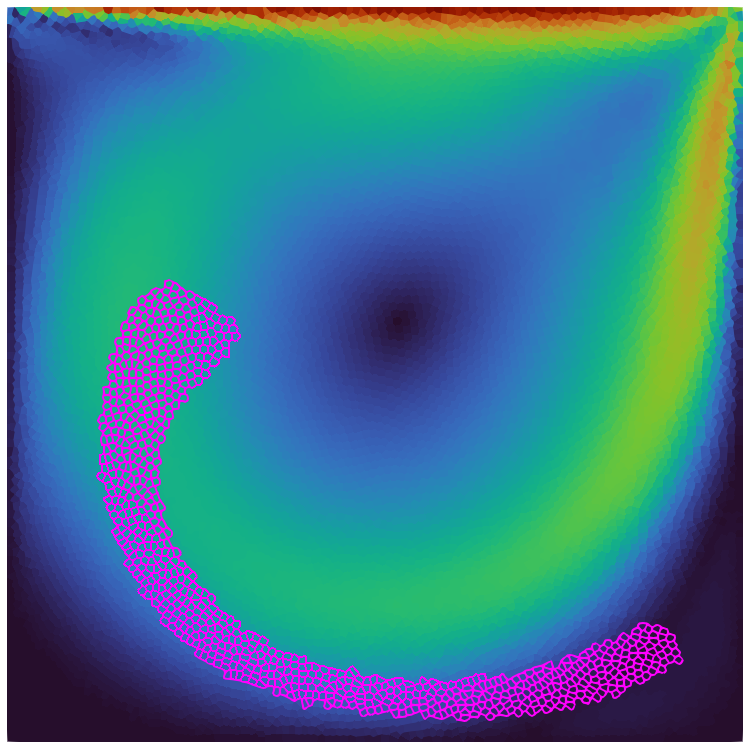} 
			\subcaption{$t = 92$}
		\end{subfigure}%
		\begin{subfigure}{0.3\linewidth}
			\includegraphics[width=\linewidth]{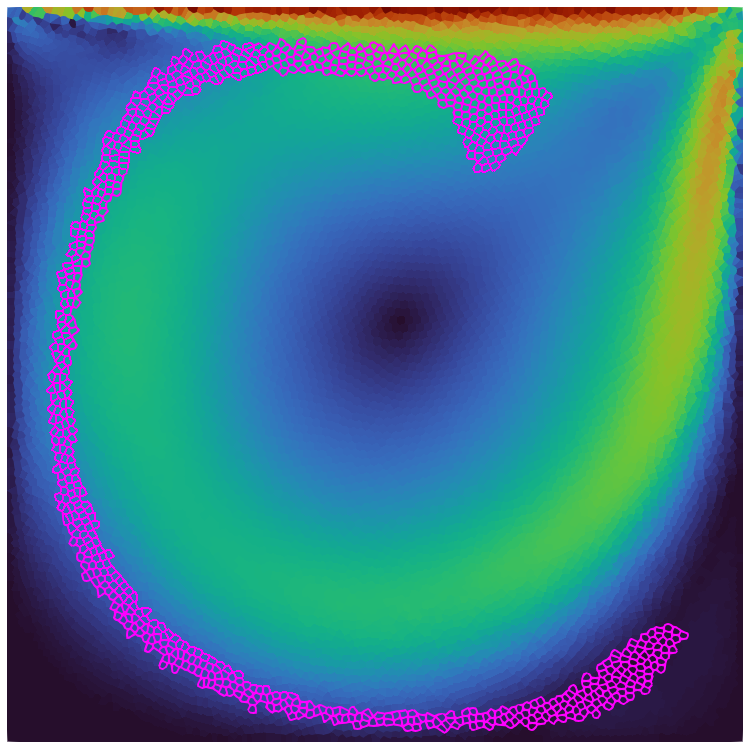} 
			\subcaption{$t = 94$}
		\end{subfigure}
		\begin{subfigure}{0.3\linewidth}
			\includegraphics[width=\linewidth]{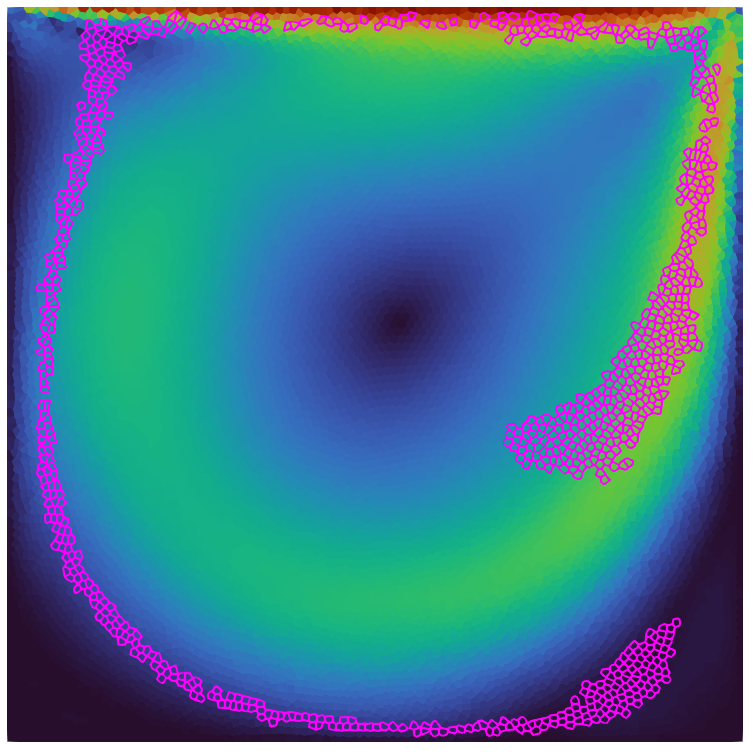} 
			\subcaption{$t = 96$}
		\end{subfigure}%
		\begin{subfigure}{0.3\linewidth}
			\includegraphics[width=\linewidth]{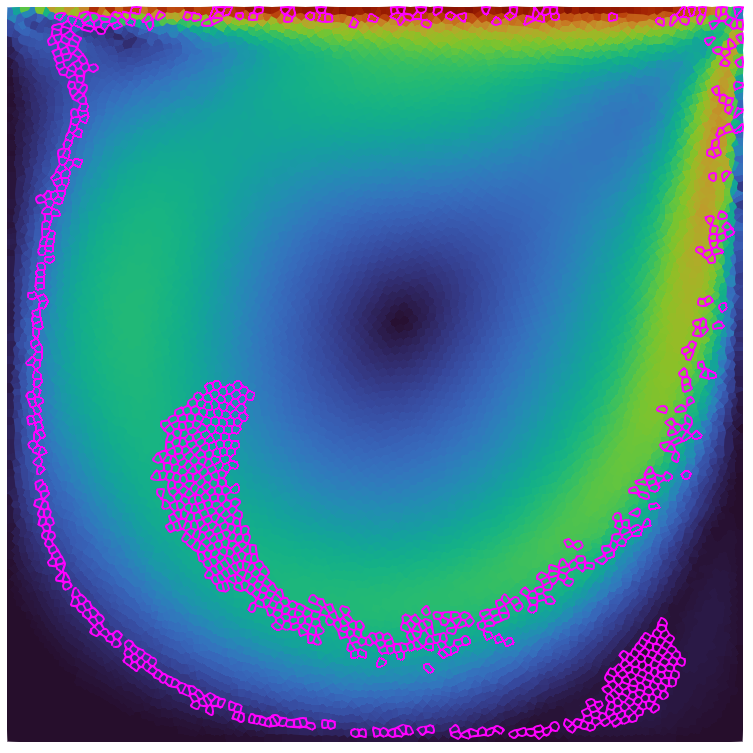} 
			\subcaption{$t = 98$}
		\end{subfigure}%
		\begin{subfigure}{0.3\linewidth}
			\includegraphics[width=\linewidth]{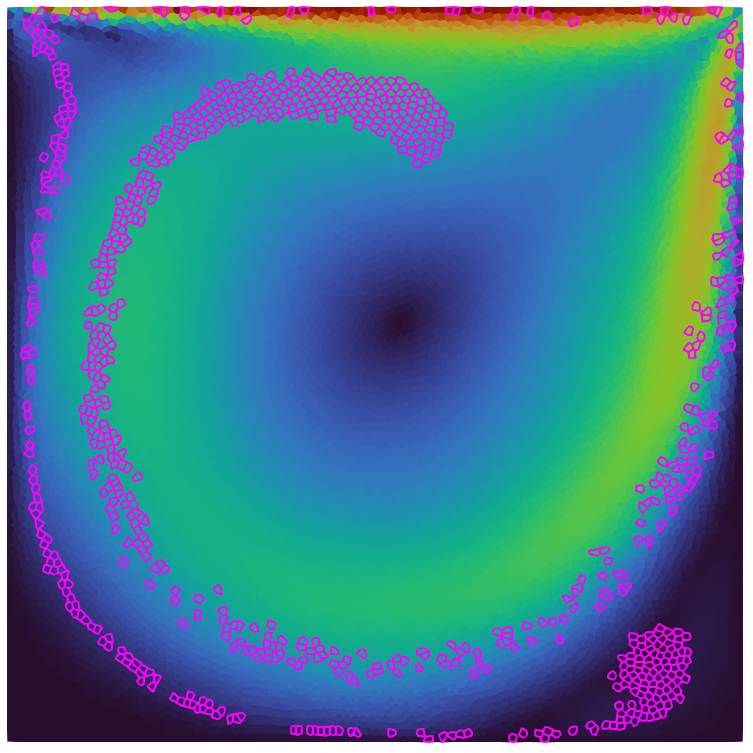} 
			\subcaption{$t = 100$}
		\end{subfigure}
		\includegraphics[width=0.4\linewidth]{images/turbo_colormap.pdf}
		\caption{Color-map of the velocity magnitude for $\mathrm{Re} = 1000$. A group of Voronoi cells is highlighted in magenta. Some of these cells remain trapped in the corner vortex.}
		\label{fig:re1000cells}
	\end{figure}

	\subsection{Rayleigh-Taylor instability}
	Finally, we present the simulation of a multi-phase problem with SILVA, namely the Rayleigh-Taylor instability \cite{rayleigh1882investigation} of an inviscid fluid without surface tension. Our setup is the same as in the SPH study presented in \cite{hu2007incompressible}. The domain is the rectangle $\Omega = [0,1]\times [0,2]$ which is discretized with $80000$ Voronoi seeds. The initial density field is given by
	\begin{equation}
		\rho(x,y) = \begin{cases}
			1.8 & y > \phi(x)\\
			1 & y < \phi(x)
		\end{cases}.
	\end{equation}
	Two immiscible fluids of different densities are involved, which are separated by a curve described as
	\begin{equation}
		\phi(x) = 1 - 0.15 \cos (2\pi x). 
	\end{equation}
	The densities correspond to Atwood number $\mathrm{A} = 2/7$. The two fluids begin at rest, have identical viscosity and are subjected to a homogeneous gravitational field. The Reynolds number is $\mathrm{Re} = 420$ and the Froude number is $\mathrm{Fr} = 1$. No-slip boundary conditions are imposed on all sides of the domain.
	
	This problem is concerned with an incompressible fluid but with a \textit{heterogeneous} density field. Hence, the approximation \eqref{eq:approxapprox} is no longer valid when we need to take into account the gradient of pressure. The correct discrete equation is now given by 
	\begin{equation}
		-\lap{p}{i}^{n+1} = -\frac{\rho_i}{\Delta t} \wdiv{\vv}{i}^n - \frac{1}{\rho_i} \sgrad{\rho}{i} \cdot \sgrad{p}{i}^{n+1}.
		\label{eq:equation_for_p}
	\end{equation}
	The above system can be solved quite efficiently by means of fixed-point iterations, with $m$ denoting the current iteration number:
	\begin{equation}
		-\lap{p^{n+1,(m+1)}}{i} = -\frac{\rho_i}{\Delta t} \wdiv{\vv}{i}^n - \frac{1}{\rho_i} \sgrad{\rho}{i} \cdot \sgrad{p^{n+1,(m)}}{i},
		\label{eq:fixpi}
	\end{equation}
	starting with $p^{n+1,(0)} = p^n$. The iterative process stops when the condition $|p^{n+1,(m+1)}-p^{n+1,(m)}|<10^{-12}$ is satisfied. Figure \ref{fig:rti} displays the result of SILVA computation, which are in qualitative agreement with the ISPH solution \cite{hu2007incompressible}. We also perform a mesh convergence study of this test for three different mesh resolutions ($N=\{60,100,200\}$), and the results are depicted in Figure \ref{fig:rti_refine}. We observe that the interface profile between the two immiscible fluids becomes increasingly more smooth, and the numerical artifacts disappear.
	
	\begin{figure}[ht!]
		\centering
		\begin{subfigure}{0.33\linewidth}
			\centering
			\includegraphics[width=0.8\linewidth]{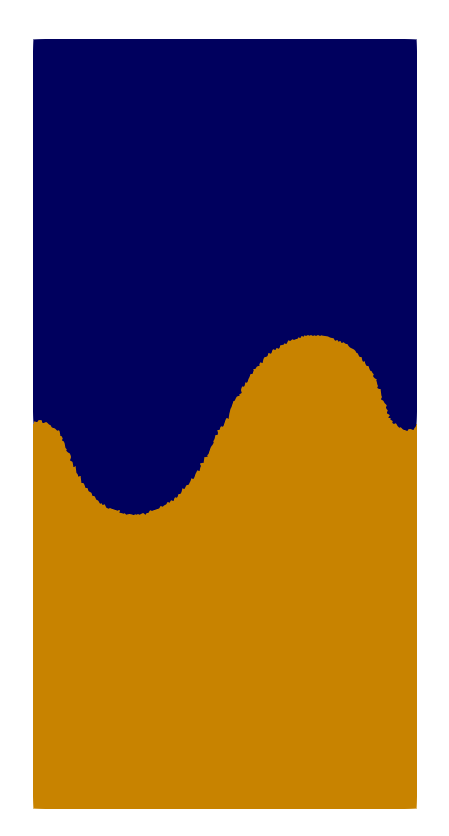}
			\subcaption{$t=1$}
		\end{subfigure}%
		\begin{subfigure}{0.33\linewidth}
			\centering
			\includegraphics[width=0.8\linewidth]{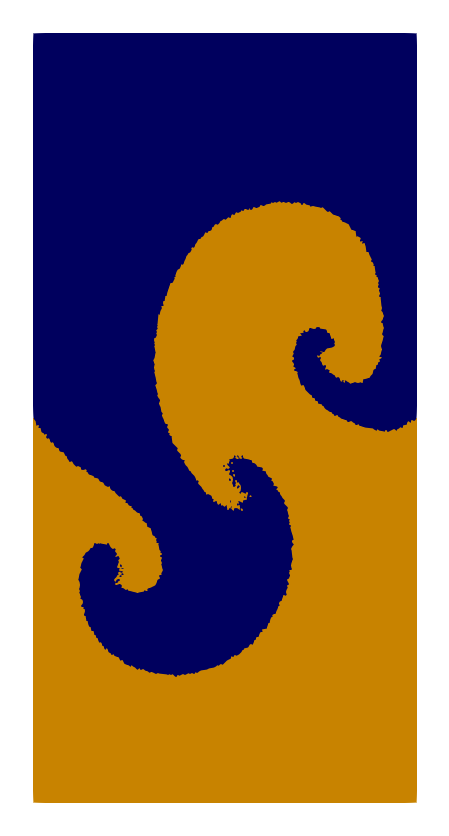}
			\subcaption{$t=3$}
		\end{subfigure}%
		\begin{subfigure}{0.33\linewidth}
			\centering
			\includegraphics[width=0.8\linewidth]{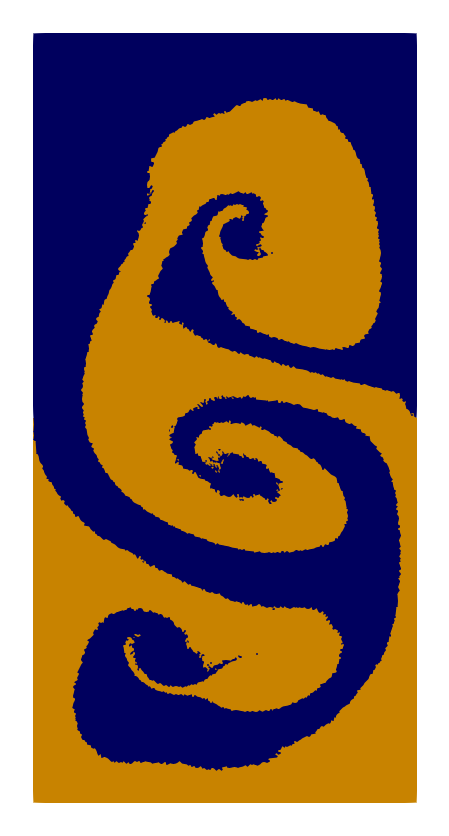}
			\subcaption{$t=5$}
		\end{subfigure}
		\caption{Snapshots of the Rayleigh-Taylor instability at output times $t=\{1,3,5\}$. The lighter fluid is orange and the heavier is dark blue. The simulation contained $80000$ Voronoi cells.}
		\label{fig:rti}
	\end{figure}
	
	\begin{figure}[ht!]
		\centering
		\begin{subfigure}{0.33\linewidth}
			\centering
			\includegraphics[width=0.8\linewidth]{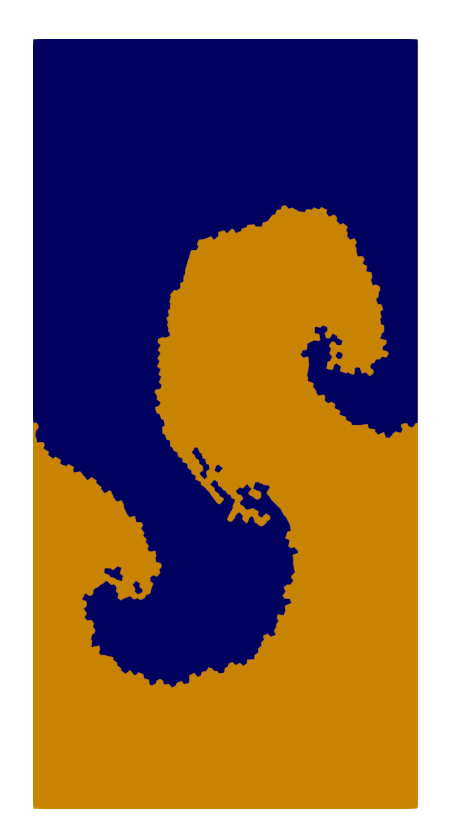}
			\subcaption{$N = 60$}
		\end{subfigure}%
		\begin{subfigure}{0.33\linewidth}
			\centering
			\includegraphics[width=0.8\linewidth]{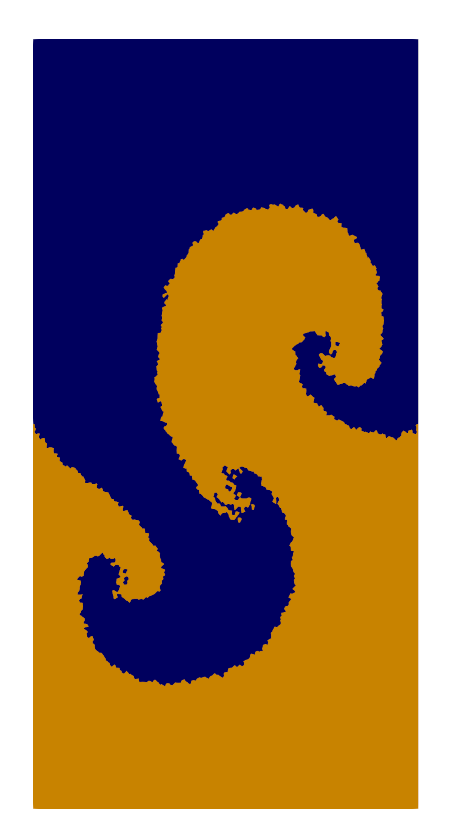}
			\subcaption{$N = 100$}
		\end{subfigure}%
		\begin{subfigure}{0.33\linewidth}
			\centering
			\includegraphics[width=0.8\linewidth]{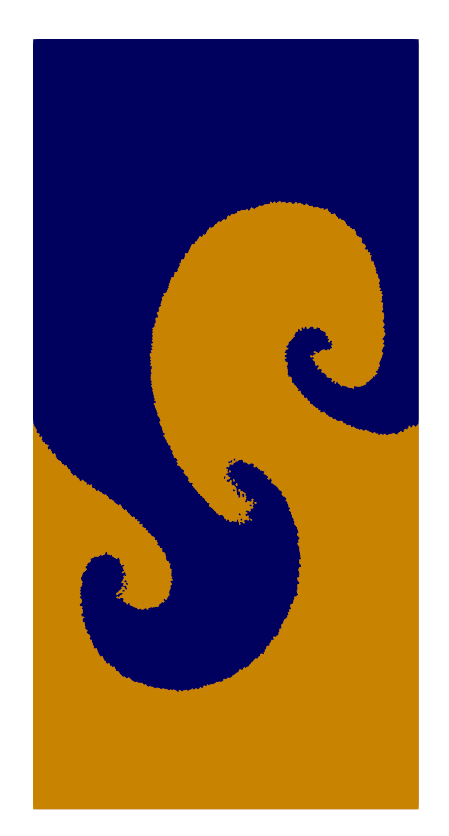}
			\subcaption{$N = 200$}
		\end{subfigure}
		\caption{The Rayleigh-Taylor instability at $t=3$ for three different resolutions. $N$ is the number of Voronoi seeds per shorter side of the domain, so there are $2N^2$ Voronoi seeds in total.}
		\label{fig:rti_refine}
	\end{figure}

	\section{Conclusion} \label{sec.concl}
	In this paper, a new Lagrangian Voronoi method (LVM) for the solution of the incompressible Navier-Stokes equations has been presented. The spatial discretization is carried out by means of moving Voronoi cells that are regenerated at each time step with an efficient cell-list based Voronoi mesh generation technique, optimized for multi-threaded time-dependent problems. To overcome the problem related to the vortex-core instability associated with the established LVM gradient operator, we have proposed a new stabilizing procedure, which is notable by the lack of numerical parameters and by preserving the exactness of linear gradients. A semi-implicit time discretization has been developed for the first time in the context of Lagrangian Voronoi methods, which discretizes explicitly the trajectory equation, that drives the mesh motion, and the viscous terms in the velocity equation. To enforce the projection of the velocity onto the associated divergence-free manifold, the velocity equation is inserted in the divergence-free constraint, yielding an implicit linear system for the pressure that corresponds to the well-known elliptic Poisson equation. The matrix of the system is provably symmetric and can therefore be solved with very efficient iterative linear solvers like the Conjugate Gradient method or MINRES. The accuracy of the scheme has been assessed in four different test-cases, featuring both viscous and inviscid fluids as well as multi-phase flow. Linear convergence has been empirically studied in a Gresho vortex benchmark. Our novel scheme, referred to as SILVA, is able to run long time simulations without any mesh degeneration, inspired by the so-called reconnection based algorithms on moving meshes.
	
	SILVA is an interesting alternative to the incompressible SPH method, because of the benefits coming from a sparser matrix associated to the elliptic Poisson system, and a relatively easy implementation of free-slip and Dirichlet boundary conditions. Multi-phase problems with sharp interface reconstruction and fluid-structure interaction are among the most promising applications. So far, a limitation of our scheme is the requirement of domain convexity and the inability to treat free surfaces. The benchmarks in this paper are two-dimensional, although we do not foresee major obstacles regarding the extension to three-dimensional problems. Further research could be directed towards improving the order of convergence, both in space (by means of a second-order reconstruction) and in time (using high order IMEX schemes, such as CNAB integrator or implicit-explicit midpoint rule \cite{carlino2024arbitrary}). Compressible flows will also be addressed in the future, by solving a mildly non-linear system for the pressure. For the sake of simplicity, we have used an explicit treatment of the viscous terms, but an implicit approach would be preferable for flows with very small Reynolds number, as presented in \cite{SICNS}.
	
	\section*{Acknowledgments}
	This work was financially supported by the Italian Ministry of University 
	and Research (MUR) in the framework of the PRIN 2022 project No. 2022N9BM3N. WB received financial support by Fondazione Cariplo and Fondazione CDP (Italy) under the project No. 2022-1895. IP and WB are members of the INdAM GNCS group in Italy.


\begin{thebibliography}{10}
	
	\bibitem{aurenhammer2013voronoi}
	Franz Aurenhammer, Rolf Klein, and Der-Tsai Lee.
	\newblock {\em Voronoi diagrams and Delaunay triangulations}.
	\newblock World Scientific Publishing Company, 2013.
	
	\bibitem{ball1996free}
	GJ~Ball.
	\newblock A free-lagrange method for unsteady compressible flow: simulation of
	a confined cylindrical blast wave.
	\newblock {\em Shock Waves}, 5:311--325, 1996.
	
	\bibitem{barber2013qhull}
	C~Bradford Barber, David~P Dobkin, and Hannu Huhdanpaa.
	\newblock Qhull: Quickhull algorithm for computing the convex hull.
	\newblock {\em Astrophysics Source Code Library}, pages ascl--1304, 2013.
	
	\bibitem{bavsic2022sphINS}
	Josip Ba{\v{s}}i{\'c}, Nastia Degiuli, Branko Blagojevi{\'c}, and Dario Ban.
	\newblock Lagrangian differencing dynamics for incompressible flows.
	\newblock {\em Journal of Computational Physics}, 462:111198, 2022.
	
	\bibitem{bo2015adaptive}
	Wurigen Bo and Mikhail Shashkov.
	\newblock Adaptive reconnection-based arbitrary lagrangian eulerian method.
	\newblock {\em Journal of Computational Physics}, 299:902--939, 2015.
	
	\bibitem{BosPar2021}
	W.~Boscheri and L.~Pareschi.
	\newblock {High order pressure-based semi-implicit IMEX schemes for the 3D
		Navier-Stokes equations at all Mach numbers}.
	\newblock {\em Journal of Computional Physics}, 434:110206, 2021.
	
	\bibitem{VoronoiDivFree}
	W.~Boscheri, G.R. Pisaturo, and M.~Righetti.
	\newblock {High order divergence-free velocity reconstruction for free surface
		flows on unstructured Voronoi meshes}.
	\newblock {\em International Journal for Numerical Methods in Fluids},
	90:296--321, 2019.
	
	\bibitem{boscheri_LagrangeFV}
	Walter Boscheri and Michael Dumbser.
	\newblock A direct arbitrary-lagrangian--eulerian ader-weno finite volume
	scheme on unstructured tetrahedral meshes for conservative and
	non-conservative hyperbolic systems in 3d.
	\newblock {\em Journal of Computational Physics}, 275:484--523, 2014.
	
	\bibitem{boscheri_LagrangeDG}
	Walter Boscheri and Michael Dumbser.
	\newblock Arbitrary-lagrangian--eulerian discontinuous galerkin schemes with a
	posteriori subcell finite volume limiting on moving unstructured meshes.
	\newblock {\em Journal of Computational Physics}, 346:449--479, 2017.
	
	\bibitem{boscheri2021high}
	Walter Boscheri and Lorenzo Pareschi.
	\newblock High order pressure-based semi-implicit imex schemes for the 3d
	navier-stokes equations at all mach numbers.
	\newblock {\em Journal of Computational Physics}, 434:110206, 2021.
	
	\bibitem{SICNS}
	Walter Boscheri and Maurizio Tavelli.
	\newblock High order semi-implicit schemes for viscous compressible flows in
	3d.
	\newblock {\em Applied Mathematics and Computation}, 434:127457, 2022.
	
	\bibitem{carlino2024arbitrary}
	Michele~Giuliano Carlino and Walter Boscheri.
	\newblock Arbitrary-lagrangian-eulerian finite volume imex schemes for the
	incompressible navier-stokes equations on evolving chimera meshes.
	\newblock {\em Journal of Computational Physics}, 501:112764, 2024.
	
	\bibitem{Casulli1990}
	V.~Casulli.
	\newblock {Semi-implicit finite difference methods for the two-dimensional
		shallow water equations}.
	\newblock {\em Journal of Computional Physics}, 86:56--74, 1990.
	
	\bibitem{chorin1997}
	Alexandre~Joel Chorin.
	\newblock A numerical method for solving incompressible viscous flow problems.
	\newblock {\em Journal of computational physics}, 135(2):118--125, 1997.
	
	\bibitem{crowley2005flag}
	WP~Crowley.
	\newblock Flag: A free-lagrange method for numerically simulating hydrodynamic
	flows in two dimensions.
	\newblock In {\em Proceedings of the Second International Conference on
		Numerical Methods in Fluid Dynamics: September 15--19, 1970 University of
		California, Berkeley}, pages 37--43. Springer, 2005.
	
	\bibitem{despres2024lagrangian}
	Bruno Despr{\'e}s.
	\newblock Lagrangian voronoi meshes and particle dynamics with shocks.
	\newblock {\em Computer Methods in Applied Mechanics and Engineering},
	418:116427, 2024.
	
	\bibitem{dobrev2019target}
	Veselin Dobrev, Patrick Knupp, Tzanio Kolev, Ketan Mittal, and Vladimir Tomov.
	\newblock The target-matrix optimization paradigm for high-order meshes.
	\newblock {\em SIAM Journal on Scientific Computing}, 41(1):B50--B68, 2019.
	
	\bibitem{dukowicz1987accurate}
	John~K Dukowicz and John~W Kodis.
	\newblock Accurate conservative remapping (rezoning) for arbitrary
	lagrangian-eulerian computations.
	\newblock {\em SIAM Journal on Scientific and Statistical Computing},
	8(3):305--321, 1987.
	
	\bibitem{Dumbser_Casulli16}
	M.~Dumbser and V.~Casulli.
	\newblock {A conservative, weakly nonlinear semi-implicit finite volume scheme
		for the compressible Navier-Stokes equations with general equation of state}.
	\newblock {\em Applied Mathematics and Computation}, 272:479--497, 2016.
	
	\bibitem{eymard2000finite}
	Robert Eymard, Thierry Gallou{\"e}t, and Rapha{\`e}le Herbin.
	\newblock Finite volume methods.
	\newblock {\em Handbook of numerical analysis}, 7:713--1018, 2000.
	
	\bibitem{fabri2009cgal}
	Andreas Fabri and Sylvain Pion.
	\newblock Cgal: The computational geometry algorithms library.
	\newblock In {\em Proceedings of the 17th ACM SIGSPATIAL international
		conference on advances in geographic information systems}, pages 538--539,
	2009.
	
	\bibitem{fritts1985free}
	Martin~J Fritts, W~Patrick Crowley, and Harold Trease.
	\newblock {\em The Free-Lagrange Method: Proceedings of the First International
		Conference on Free-Lagrange Methods, Held at Hilton Head Island, South
		Carolina, March 4--6, 1985}.
	\newblock Springer, 1985.
	
	\bibitem{gaburro2020high}
	Elena Gaburro, Walter Boscheri, Simone Chiocchetti, Christian Klingenberg,
	Volker Springel, and Michael Dumbser.
	\newblock High order direct arbitrary-lagrangian-eulerian schemes on moving
	voronoi meshes with topology changes.
	\newblock {\em Journal of Computational Physics}, 407:109167, 2020.
	
	\bibitem{galera2010two}
	St{\'e}phane Galera, Pierre-Henri Maire, and J{\'e}r{\^o}me Breil.
	\newblock A two-dimensional unstructured cell-centered multi-material ale
	scheme using vof interface reconstruction.
	\newblock {\em Journal of Computational Physics}, 229(16):5755--5787, 2010.
	
	\bibitem{ghia1982high}
	UKNG Ghia, Kirti~N Ghia, and CT~Shin.
	\newblock High-re solutions for incompressible flow using the navier-stokes
	equations and a multigrid method.
	\newblock {\em Journal of computational physics}, 48(3):387--411, 1982.
	
	\bibitem{hess2010particle}
	Steffen He{\ss} and Volker Springel.
	\newblock Particle hydrodynamics with tessellation techniques.
	\newblock {\em Monthly Notices of the Royal Astronomical Society},
	406(4):2289--2311, 2010.
	
	\bibitem{Howell2002}
	B.P. Howell and G.J. Ball.
	\newblock {A Free-Lagrange Augmented Godunov Method for the Simulation of
		Elastic–Plastic Solids}.
	\newblock {\em Journal of Computational Physics}, 175(1):128--167, 1 2002.
	
	\bibitem{hu2007incompressible}
	XY~Hu and Nikolaus~A Adams.
	\newblock An incompressible multi-phase sph method.
	\newblock {\em Journal of computational physics}, 227(1):264--278, 2007.
	
	\bibitem{izzo2017highly}
	Giuseppe Izzo and Zdzislaw Jackiewicz.
	\newblock Highly stable implicit--explicit runge--kutta methods.
	\newblock {\em Applied Numerical Mathematics}, 113:71--92, 2017.
	
	\bibitem{izzo2021construction}
	Giuseppe Izzo and Zdzislaw Jackiewicz.
	\newblock Construction of sdirk methods with dispersive stability functions.
	\newblock {\em Applied Numerical Mathematics}, 160:265--280, 2021.
	
	\bibitem{izzo2021strong}
	Giuseppe Izzo and Zdzis{\l}aw Jackiewicz.
	\newblock Strong stability preserving imex methods for partitioned systems of
	differential equations.
	\newblock {\em Communications on Applied Mathematics and Computation},
	3(4):719--758, 2021.
	
	\bibitem{kucharik2014conservative}
	Milan Kucharik and Mikhail Shashkov.
	\newblock Conservative multi-material remap for staggered multi-material
	arbitrary lagrangian--eulerian methods.
	\newblock {\em Journal of Computational Physics}, 258:268--304, 2014.
	
	\bibitem{kulasegaram2000corrected}
	S~Kulasegaram, J~Bonet, TSL Lok, and M~Rodriguez-Paz.
	\newblock Corrected smooth particle hydrodynamics-a meshless method for
	computational mechanics.
	\newblock In {\em CD-Rom Proceedings of European Congress on Computational
		Methods in Applied Sciences and Engineering
		“ECCOMAS-2000”.--Barcelona,--11-14 September}, 2000.
	
	\bibitem{liska2003comparison}
	Richard Liska and Burton Wendroff.
	\newblock Comparison of several difference schemes on 1d and 2d test problems
	for the euler equations.
	\newblock {\em SIAM Journal on Scientific Computing}, 25(3):995--1017, 2003.
	
	\bibitem{liu2020parallel}
	Xiaohan Liu, Lei Ma, Jianwei Guo, and Dong-Ming Yan.
	\newblock Parallel computation of 3d clipped voronoi diagrams.
	\newblock {\em IEEE transactions on visualization and computer graphics},
	28(2):1363--1372, 2020.
	
	\bibitem{loubere2010reale}
	Rapha{\"e}l Loubere, Pierre-Henri Maire, Mikhail Shashkov, J{\'e}r{\^o}me
	Breil, and St{\'e}phane Galera.
	\newblock Reale: a reconnection-based arbitrary-lagrangian--eulerian method.
	\newblock {\em Journal of Computational Physics}, 229(12):4724--4761, 2010.
	
	\bibitem{loubere2005subcell}
	Rapha{\"e}l Loubere and Mikhail~J Shashkov.
	\newblock A subcell remapping method on staggered polygonal grids for
	arbitrary-lagrangian--eulerian methods.
	\newblock {\em Journal of Computational Physics}, 209(1):105--138, 2005.
	
	\bibitem{monaghan1992smoothed}
	Joe~J Monaghan.
	\newblock Smoothed particle hydrodynamics.
	\newblock {\em In: Annual review of astronomy and astrophysics. Vol. 30
		(A93-25826 09-90), p. 543-574.}, 30:543--574, 1992.
	
	\bibitem{monaghan2000sph}
	Joseph~J Monaghan.
	\newblock Sph without a tensile instability.
	\newblock {\em Journal of computational physics}, 159(2):290--311, 2000.
	
	\bibitem{monaghan2012smoothed}
	Joseph~J Monaghan.
	\newblock Smoothed particle hydrodynamics and its diverse applications.
	\newblock {\em Annual Review of Fluid Mechanics}, 44:323--346, 2012.
	
	\bibitem{oger2007improved}
	Guillaume Oger, Mathieu Doring, Bertrand Alessandrini, and Pierre Ferrant.
	\newblock An improved sph method: Towards higher order convergence.
	\newblock {\em Journal of Computational Physics}, 225(2):1472--1492, 2007.
	
	\bibitem{Hofer}
	S.~Osher and F.~Solomon.
	\newblock {A partially implicit method for large stiff systems of Ode's with
		only few equations introducing small time-constants}.
	\newblock {\em SIAM J. Numer. Anal.}, 13:645--663, 1976.
	
	\bibitem{PR_IMEX}
	L.~Pareschi and G.~Russo.
	\newblock {Implicit-explicit Runge-Kutta schemes and applications to hyperbolic
		systems with relaxation}.
	\newblock {\em J. Sci. Comput.}, 25:129--155, 2005.
	
	\bibitem{ParkMunz2005}
	J.H. Park and C.-D. Munz.
	\newblock {Multiple pressure variables methods for fluid flow at all Mach
		numbers}.
	\newblock {\em Int. J. Num. Meth. Fluids}, 49:905--931, 2005.
	
	\bibitem{pasta1959heuristic}
	John~R Pasta and Stanislaw Ulam.
	\newblock Heuristic numerical work in some problems of hydrodynamics.
	\newblock {\em Mathematical tables and other aids to computation},
	13(65):1--12, 1959.
	
	\bibitem{ray2018meshless}
	Nicolas Ray, Dmitry Sokolov, Sylvain Lefebvre, and Bruno L{\'e}vy.
	\newblock Meshless voronoi on the gpu.
	\newblock {\em ACM Transactions on Graphics (TOG)}, 37(6):1--12, 2018.
	
	\bibitem{rayleigh1882investigation}
	Rayleigh.
	\newblock Investigation of the character of the equilibrium of an
	incompressible heavy fluid of variable density.
	\newblock {\em Proceedings of the London mathematical society}, 1(1):170--177,
	1882.
	
	\bibitem{rycroft2009voro++}
	Chris Rycroft.
	\newblock Voro++: A three-dimensional voronoi cell library in c++.
	\newblock 2009.
	
	\bibitem{serrano2001thermodynamically}
	Mar Serrano and Pep Espanol.
	\newblock Thermodynamically consistent mesoscopic fluid particle model.
	\newblock {\em Physical Review E}, 64(4):046115, 2001.
	
	\bibitem{springel2011hydrodynamic}
	Volker Springel.
	\newblock Hydrodynamic simulations on a moving voronoi mesh.
	\newblock {\em arXiv preprint arXiv:1109.2218}, 2011.
	
	\bibitem{Toro_Vazquez12}
	E.F. Toro and M.E. V{\'a}zquez-Cend{\'o}n.
	\newblock {Flux splitting schemes for the Euler equations}.
	\newblock {\em Computers \& Fluids}, 70:1--12, 2012.
	
	\bibitem{violeau2012fluid}
	Damien Violeau.
	\newblock {\em Fluid mechanics and the SPH method: theory and applications}.
	\newblock Oxford University Press, 2012.
	
	\bibitem{violeau2016smoothed}
	Damien Violeau and Benedict~D Rogers.
	\newblock Smoothed particle hydrodynamics (sph) for free-surface flows: past,
	present and future.
	\newblock {\em Journal of Hydraulic Research}, 54(1):1--26, 2016.
	
	\bibitem{winslow1963equipotential}
	Alan~M Winslow.
	\newblock ''equipotential''zoning of two-dimensional meshes.
	\newblock Technical report, California Univ., Livermore (USA). Lawrence
	Livermore Lab., 1963.
	
	\bibitem{winslow1966numerical}
	Alan~M Winslow.
	\newblock Numerical solution of the quasilinear poisson equation in a
	nonuniform triangle mesh.
	\newblock {\em Journal of computational physics}, 1(2):149--172, 1966.
	
	\bibitem{zhu2015numerical}
	Qirong Zhu, Lars Hernquist, and Yuexing Li.
	\newblock Numerical convergence in smoothed particle hydrodynamics.
	\newblock {\em The Astrophysical Journal}, 800(1):6, 2015.
	
\end{thebibliography}
\end{document}